\numberwithin{equation}{section}
\newtheorem{theorem}{Theorem}[section]
\newtheorem{corollary}[theorem]{Corollary}
\newtheorem{lemma}[theorem]{Lemma}
\newtheorem{proposition}[theorem]{Proposition}
\theoremstyle{remark}
\newtheorem{remark}{Remark}[section]
\theoremstyle{definition}
\newtheorem{definition}[theorem]{Definition}
\DeclareMathOperator{\spt}{spt}
\newcommand{\bra}[1]{\langle #1 \rangle}
\title%
[Wave maps]%
{Global existence of small equivariant wave maps
on rotationally symmetric manifolds}
\date{\today}    
\author{Piero D'Ancona}
\address{Piero D'Ancona: Unversit\`a di Roma ``La Sapienza'',
Dipartimento di Matematica, Piazzale A.~Moro 2, I-00185 Roma, Italy}
\email{dancona@mat.uniroma1.it}
\thanks{The first author was partially supported by
the Italian Project FIRB 2012: ``Dispersive
dynamics: Fourier Analysis and Variational Methods''}
\author{Qidi Zhang}
\address{Qidi Zhang: School of Science, East China University of
Science and Technology, Meilong Road 130, Shanghai, 200237, China}
\email{qidizhang@ecust.edu.cn}
\thanks{The second author was
partially supported by NSFC 11271322 and the Fundamental Research
Funds for the Central Universities.
He would like to thank the University of Roma
Sapienza for the hospitality; part of this work has been
finished during his stay in Roma.
}
\subjclass[2000]{%
35L70, 
58J45
}\keywords{}
\begin{document}\begin{abstract}
  We introduce a class of rotationally invariant manifolds,
  which we call \emph{admissible}, on which
  the wave flow satisfies smoothing and Strichartz estimates.
  We deduce the global existence of
  equivariant wave maps
  from admissible manifolds to general targets,
  for small initial data of critical regularity $H^{\frac n2}$.
  The class of admissible manifolds includes in particular
  asymptotically flat manifolds and perturbations of
  real hyperbolic spaces $\mathbb{H}^{n}$ for $n\ge3$.
\end{abstract}
\maketitle

\section{Introduction}\label{sec:introduction}

\emph{Wave maps} are functions $u:M^{1+n}\to N^{\ell}$
from a
Lorentzian manifold $(M^{1+n},h)$ to a
Riemannian manifold $(N^{\ell},g)$, which are critical points
for the functional on $M^{1+n}$ with Lagrangian density
$L(u)=Tr_{h}(u^{*}g)$, the trace with respect to the metric
$h$ of the pullback of the metric $g$ through the map $u$.
The space $M^{1+n}$ is usually called the \emph{base manifold}
and $N^{\ell}$ the \emph{target manifold}; both are assumed
to be smooth, complete and without boundary.
This notion extends to a Lorentzian setting
the usual definition of harmonic maps
between Riemannian manifolds.
Wave maps arise in several different physical theories,
and in particular they play an\
important role in general relativity.

When the base manifold
is the flat Minkowski space $\mathbb{R}\times \mathbb{R}^{n}$,
in local coordinates on the target, the
Euler-Lagrange equations for $L(u)$ reduce to
a system of derivative nonlinear wave equations
\begin{equation}\label{eq:wmsys}
  \square u^a + \Gamma^a_{bc}(u)
  \partial_\alpha u^b \partial^\alpha u^c = 0,
\end{equation}
where $\Gamma^{a}_{bc}$ are the Christoffel
symbols on $N^{\ell}$
and we use implicit summation over
repeated indices. The natural setting
is then the Cauchy problem with data at $t=0$
\begin{equation}\label{eq:wmdata}
  u(0,x)=u_0,\qquad u_t(0,x)=u_1.
\end{equation}
The data are taken in suitable $N^{\ell}$-valued Sobolev spaces
\begin{equation}\label{eq:wmsob}
  (u_{0},u_{1})\in H^{s}(\mathbb{R}^{m},N^{\ell})\times
  H^{s-1}(\mathbb{R}^{m},TN^{\ell})
\end{equation}
which can be defined as follows, if
$N^{\ell}$ is isometrically embedded in a
euclidean $\mathbb{R}^{\ell'}$:
\begin{equation}\label{Hs}
   H^s(\mathbb{R}^m;N^{\ell})
   :=
   \{v\in
     H^s(\mathbb{R}^m;\mathbb{R}^{\ell'}),
     \ v(\mathbb{R}^m)\subseteq N^{\ell}\}.
\end{equation}
Solutions belong to the space $C([0,T);H^{s})$,
with $T\le \infty$.
Starting with
\cite{Gu80-a},
\cite{GinibreVelo82-c}
Problem \eqref{eq:wmsys}, \eqref{eq:wmdata}
has been studied extensively;  see
\cite{ShatahStruwe98-a} and
\cite{DAnconaGeorgiev05-a}
for a review of the classical theory.

Since equation \eqref{eq:wmsys} is invariant for
the scaling $u(t,x)\mapsto u(\lambda t, \lambda x)$,
the critical Sobolev space for the data
corresponds to $s=\frac n2$. In dimension $n=1$
energy conservation is sufficient to prove global well
posedness, thus in the following we assume $n\ge2$.
Concerning local existence, the behaviour is rather clear;
Problem \eqref{eq:wmsys}--\eqref{eq:wmsob} is
\begin{itemize}
[noitemsep,topsep=4pt,parsep=0pt,partopsep=0pt]
  \item locally well posed if $s>\frac n2$
  (see
  \cite{KlainermanMachedon93-a},
  \cite{KlainermanSelberg97-a}). Note that classical energy
  estimates only allow to prove local existence for
  $s>\frac n2+1$, and the sharp result requires
  bilinear methods which exploit the null
  structure of the nonlinearity.
  \item ill posed if $s<\frac n2$
  (see
  \cite{Shatah88-a},
  \cite{DAnconaGeorgiev04-a},
  \cite{DAnconaGeorgiev04-b}).
\end{itemize}
The problem of global existence with small
data has been completely understood through the efforts
of many authors during the last 20 years
(see among the others
\cite{ShatahTahvildar-Zadeh94-a},
\cite{Tao01-e},
\cite{Tao01-d},
\cite{KlainermanRodnianski01-a},
\cite{ShatahStruwe02-a},
\cite{Krieger03-a},
\cite{Tataru05-a}).
The end result is that if the
initial data belong to
$H^{\frac n2}\times H^{\frac n2-1}$, and their
homogeneous
$\dot H^{\frac n2}\times\dot H^{\frac n2-1}$ norm
is sufficiently small, then there exists a
global solution, continuous with values in $H^{\frac n2}$,
for general targets.
Note that the solution also belongs to a suitable Strichartz
space (more on this below), and uniqueness holds only
under this additional constraint.

When the initial data are large, the geometry of the target
manifold comes into play, and the problem
presents additional difficulties;
in particular, blow up in finite time may occur.
For targets with positive curvature, when the
dimension of the base space is $n\ge3$,
blow up examples with self similar structure were
constructed already in
\cite{Shatah88-a},
\cite{ShatahTahvildar-Zadeh94-a}.
On the other hand, when the target is negatively curved,
the available blow up examples require $n\ge7$
\cite{CazenaveShatahTahvildar-Zadeh98-a}.

The case $n=2$ is especially interesting
since the critical norm $\dot H^{\frac n2}$ coincides
with the energy norm, which is conserved.
The general conjecture is that large solutions may
blow up for certain classes of targets with
positive curvature, while they can be continued globally for
geodesically convex targets.
In this generality the conjecture remains open,
and is being actively researched,
but it has been confirmed in several cases
and is supported by numerical evidence.
Note however that for compact targets it was
proved in
\cite{SterbenzTataru10-a},
\cite{SterbenzTataru10-b}
that solutions are global as long as the energy of the
initial data is below the energy of a minimal harmonic map
(so that all solutions
are global when such maps do not exist).
When the target is the hyperbolic space $\mathbb{H}^{2}$,
global existence was proved in
\cite{Krieger04-a}
(see also \cite{Krieger03-a}).
In a rotationally symmetric setting, global existence
for geodesically convex targets was obtained in
\cite{Grillakis91-a},
\cite{ChristodoulouTahvildar-Zadeh93-b},
\cite{ShatahTahvildar-Zadeh94-a},
\cite{Struwe03-a}, while blow up solutions
for the $\mathbb{S}^{2}$ target
were constructed and analyzed in
\cite{KriegerSchlagTataru08-a},
\cite{RodnianskiSterbenz10-a},
\cite{RaphaelRodnianski12-a}.
However, radially symmetric solutions into
the 2-sphere never blow up
\cite{Struwe02-b}. See
\cite{Tataru04-a}
for additional information and detailed references.

The more general case of a nonflat base manifold has received
much less attention. If we restrict to maps defined on a
product $\mathbb{R}\times M^{n}$,
with $M^{n}$ a Riemannian manifold, the
wave map system in local coordinates \eqref{eq:wmsys}
becomes
\begin{equation}\label{eq:wmsysh}
  u^{a}_{tt}-\Delta_{M}u^{a}
  + \Gamma^a_{bc}(u)
    \partial_\alpha u^b \partial^\alpha u^c = 0,
\end{equation}
where $\Delta_{M}$ is the negative Laplace-Beltrami
operator on $M^{n}$. To our knowledge, there
are few results on \eqref{eq:wmsysh}.
In
\cite{ShatahTahvildar-Zadeh97-a}
the stability of equivariant,
stationary wave maps on $\mathbb{S}^{2}$ with values
in $\mathbb{S}^{2}$ is proved, while
\cite{Choquet-Bruhat00-a} considers the local existence
on Robertson-Walker spacetimes.
More recently, in
\cite{Lawrie13-a}
global existence of small wave maps is proved
in the case when $M^{n}=M^{4}$
is a four dimensional small perturbation of
flat $\mathbb{R}^{4}$, and
the stability of equivariant wave maps defined on
$\mathbb{H}^{2}$ is studied in
\cite{LawrieOhShahshahani14-a}.

In the present paper we initiate the study of equivariant
solutions of \eqref{eq:wmsysh} on more general
base manifolds $M^{n}$, $n\ge3$.
Our main result is the global existence of
equivariant wave maps for small data in the critical norm,
provided the base manifold belongs to a class
of manifolds which we call
\emph{admissible}.
The class of admissible manifolds is rather large, and
includes in particular asymptotically flat manifolds and
perturbations of real hyperbolic spaces;
see some examples in
Remark \ref{rem:admman} below and a more detailed discussion
in Section \ref{sec:applications} at the end of the paper.
The precise definition is the following:

\begin{definition}[Admissible manifolds]\label{def:admmanintro}
  Let $n\ge3$.
  We say that a smooth manifold $M^{n}$ is \emph{admissible}
  if its metric has the form
  $dr^{2}+h(r)^{2}d \omega^{2}_{\mathbb{S}^{n-1}}$
  and $h(r)$ satisfies:
  \begin{enumerate}
  [noitemsep,topsep=0pt,parsep=0pt,partopsep=0pt,
  label=\textit{(\roman*)}]
    \item $\exists h_{\infty}\ge0$ such
    that
    $H(r):=h^{\frac{1-n}{2}}(h^{\frac{n-1}{2}})''=
      h_{\infty}+O(r^{-2})$ for $r\gg 1$.
    \item $H^{(j)}(r)=O(r^{-1})$ and
    $(h^{-\frac12})^{(j)}=O(r^{-\frac12-j})$
    for $r\gg1$ and $1\le j\le[\frac{n-1}{2}]$.
    \item There exist $c,\delta_{0}>0$ such that
    for $r>0$ we have $h(r)\ge cr$ while the function
    $P(r)=rH(r)-rh_{\infty}+\frac{1-\delta_{0}}{4r}$
    satisfies the condition
    $P(r)\ge 0 \ge P'(r)$.
  \end{enumerate}
\end{definition}

Note that (i) is a form of asymptotic convexity,
while (iii) is effective
essentially on a bounded region. Condition (ii), on the other hand,
is weaker and excludes singularities of the metric at infinity.
The parameter $h_{\infty}$ can be understood as a measure of
the curvature of the manifold at infinity; $h_{\infty}=0$ means
essentially that the manifold is asymptotically flat, while the case
$h_{\infty}>0$ includes examples with large asymptotic curvature,
like the hyperbolic spaces.

Now assume both $M^{n}$ and $N^{\ell}$
are rotationally symmetric manifolds, with global metrics
\begin{equation}\label{eq:metrics}
  M^{n}:\quad
  dr^{2}+h(r)^{2}d \omega_{\mathbb{S}^{n-1}}^{2},
  \qquad
  N^{\ell}:\quad
  d \phi^{2}+g(\phi)^{2}d \chi_{\mathbb{S}^{\ell-1}}^{2}
\end{equation}
where $d \omega_{\mathbb{S}^{n-1}}^{2}$
and
$d \chi_{\mathbb{S}^{\ell-1}}^{2}$
are the standard metrics on the unit sphere. We recall the
\emph{equivariant ansatz}
(see \cite{ShatahStruwe98-a}):
writing the map $u=(\phi,\chi)$ in coordinates
on $N^{\ell}$,
the radial component $\phi=\phi(t,r)$ depends only on time
and $r$, the radial coordinate on $M^{n}$,
while the angular component
$\chi=\chi(\omega)$ depends only on the angular coordinate
$\omega$ on $M^{n}$. It follows that
$\chi:\mathbb{S}^{n-1}\to \mathbb{S}^{\ell-1}$ must be a harmonic
polynomial map of degree $k$, whose energy density is $k(k+n-2)$
for some integer $k\ge1$. On the other hand $\phi(t,r)$
must satisfy the \emph{$\bar{\ell}$-equivariant wave map equation}
\begin{equation}\label{eq:equivariant}
  \phi_{tt}-\phi_{rr}-(n-1)\frac{h'(r)}{h(r)}\phi_{r}
  +\frac{\bar{\ell}}{h(r)^{2}}g(\phi)g'(\phi)=0
\end{equation}
where $\bar{\ell}=k(k+n-2)$ and for which one considers the Cauchy problem with initial data
\begin{equation}\label{eq:equivdata}
  \phi(0,r)=\phi_{0}(r),
  \qquad
  \phi_{t}(0,r)=\phi_{1}(r).
\end{equation}
When $h(r)=r$ the base space is the flat $\mathbb{R}^{n}$ and
\eqref{eq:equivariant} reduces to the equation originally
studied in \cite{ShatahTahvildar-Zadeh94-a}.

In the following statement we use the
notation $|D_{M}|=(-\Delta_{M})^{\frac12}$,
where $\Delta_{M}$ is the Laplace-Beltrami operator
on $M^{n}$. If
$v:M^{n}\to N^{\ell}$ is an equivariant map of the form
$v=(\phi(r),\chi(\omega))$
with $\chi:\mathbb{S}^{n-1}\to \mathbb{S}^{\ell-1}$
a fixed harmonic map, its Sobolev $H^{s}(M^{n};N^{\ell})$
norm can be equivalently expresssed as
\begin{equation*}
  \|v\|_{H^{s}(M^{n};N^{\ell})}
  \simeq
  \|\phi\|_{H^{s}}:=
  \|(1-\Delta_{M})^{\frac s2}\phi\|_{L^{2}(M^{n})}.
\end{equation*}
We define also the weighted Sobolev space
$H^{s}_{q}(w)$ of radial functions on $M^{n}$
with norm
\begin{equation*}
  \|\phi\|_{H^{s}_{q}(w)}:=
  \|w^{-1}(|x|)\phi(|x|)\|_{H^{s}_{q}(\mathbb{R}^{n+2k})},
  \qquad
  w(r):=r^{k}\frac{r^{\frac{n-1}{2}}}{h(r)^{\frac{n-1}{2}}}.
\end{equation*}
and we choose the indices $(p,q)$ as
\begin{equation}\label{eq:pqind}
  p=\frac{4(m+1)}{m+3},
  \qquad
  q=\frac{4m(m+1)}{2m^{2}-m-5},
  \qquad
  m=n+2k.
\end{equation}
The notation $L^{\infty}H^{s}\cap CH^{s}$ denotes the space of
continuous bounded functions from $\mathbb{R}$ to
$H^{s}$, while $L^{p}H^{s}_{q}(w)$ is the space
of functions $\phi(t,r)$ which are $L^{p}$ in time
with values in $H^{s}_{q}(w)$.
Our main result is the following:

\begin{theorem}[Global existence in the critical norm]
  \label{the:main}
  Let $n\ge3$, $k\ge1$, $\bar{\ell}=k(k+n-2)$
  and $p,q$ as in \eqref{eq:pqind}.
  Assume $M^{n}$ and $N^{\ell}$ are two rotationally
  invariant manifolds with metrics given by \eqref{eq:metrics},
  with $M^{n}$ admissible, and let $h_{\infty}$ be the
  limit of $h^{\frac{1-n}{2}}(h^{\frac{n-1}{2}})''$ as
  $r\to \infty$. Consider the Cauchy problem
  \eqref{eq:equivariant}, \eqref{eq:equivdata}.

  If $h_{\infty}>0$ and
  $\| \phi_{0}\|_{H^{\frac{n}{2}}}+
    \|\phi_{1}\|_{H^{\frac{n}{2}-1}}$
  is sufficiently small, the problem
  has a unique global solution
  $\phi(t,r)\in L^{\infty}H^{\frac{n}{2}}
    \cap CH^{\frac{n}{2}}
    \cap L^{p}H^{\frac{n-1}{2}}_{q}(w)$.

  If $h_{\infty}=0$ and
  $\||D_{M}|^{\frac12} \phi_{0}\|_{H^{\frac{n-1}{2}}}+
    \||D_{M}|^{-\frac12}\phi_{1}\|_{H^{\frac{n-1}{2}}}$
  is sufficiently small, the problem
  has a unique global solution
  $\phi(t,r)$ with
  $|D_{M}|^{\frac12}\phi\in L^{\infty}H^{\frac{n-1}{2}}
    \cap CH^{\frac{n-1}{2}}$
  and
  $\phi\in L^{p}H^{\frac{n-1}{2}}_{q}(w)$.
\end{theorem}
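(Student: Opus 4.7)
The plan is to recast the equivariant equation \eqref{eq:equivariant} as a perturbed radial wave equation on the auxiliary Euclidean space $\mathbb{R}^{m}$ with $m=n+2k$, and then close a fixed-point argument using the linear Strichartz and smoothing estimates for admissible manifolds that are the core content of the preceding sections of the paper.

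First, I would Taylor-expand $g(\phi)g'(\phi)=\phi+R(\phi)$ with $R$ smooth and $R(\phi)=O(\phi^{3})$, absorbing the linear centrifugal term $\bar\ell\phi/h^{2}$ into the wave operator. Setting $\tilde\phi:=\phi/w$ with $w(r)=r^{(m-1)/2}h(r)^{-(n-1)/2}$, and using the two standard radial conjugations
\begin{equation*}
h^{(n-1)/2}\Delta_{M}^{\mathrm{rad}}\phi = V_{rr}-H(r)V,
\qquad
r^{(m-1)/2}\Delta_{\mathbb{R}^{m}}^{\mathrm{rad}}\tilde\phi = V_{rr}-\tfrac{(m-1)(m-3)}{4r^{2}}V,
\end{equation*}
with $V:=h^{(n-1)/2}\phi=r^{(m-1)/2}\tilde\phi$, together with the algebraic identity $(m-1)(m-3)/4=\bar\ell+(n-1)(n-3)/4$, I obtain
\begin{equation*}
\tilde\phi_{tt}-\Delta_{\mathbb{R}^{m}}\tilde\phi+\mathcal{V}(r)\tilde\phi
= -\bar\ell\, w^{-1}h^{-2}R(w\tilde\phi),
\qquad
\mathcal{V}(r)=H(r)-\tfrac{(n-1)(n-3)}{4r^{2}}+\bar\ell\!\left(\tfrac{1}{h^{2}}-\tfrac{1}{r^{2}}\right).
\end{equation*}
By Definition \ref{def:admmanintro}(i), $\mathcal{V}(r)=h_{\infty}+O(r^{-2})$ at infinity; by (iii), $h\ge cr$, so the singular parts of $\mathcal{V}$ and of the nonlinear source are dominated by the inverse-square weight on $\mathbb{R}^{m}$. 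Crucially, the $H^{s}$-norm of $\phi$ on $M^{n}$ is by construction equivalent to the $H^{s}(\mathbb{R}^{m})$-norm of $\tilde\phi$, and the weighted Strichartz norm $L^{p}H^{(n-1)/2}_{q}(w)$ is exactly the flat $L^{p}H^{(n-1)/2}_{q}(\mathbb{R}^{m})$-norm of $\tilde\phi$.

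Second, I would apply the Strichartz and smoothing estimates for $\partial_{t}^{2}-\Delta_{\mathbb{R}^{m}}+\mathcal{V}$ on radial data, established earlier in the paper under the admissibility hypothesis. In the massive branch $h_{\infty}>0$ these yield the inhomogeneous bound
\begin{equation*}
\|\tilde\phi\|_{L^{\infty}H^{n/2}\cap L^{p}H^{(n-1)/2}_{q}}
\lesssim \|\tilde\phi_{0}\|_{H^{n/2}}+\|\tilde\phi_{1}\|_{H^{n/2-1}}+\|F\|_{Y},
\end{equation*}
for a dual Strichartz space $Y$ adapted to the pair $(p,q)$ from \eqref{eq:pqind}; a direct computation gives $1/p+m/q=(m-1)/2$, the gap needed to place an $\dot H^{1/2}$-wave in $L^{p}L^{q}$, which after $(n-1)/2$ derivatives provides precisely the scaling-invariant norm. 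In the massless branch $h_{\infty}=0$ only homogeneous estimates are available, which is why the hypothesis on the data must be phrased through $|D_{M}|^{\pm 1/2}$ to recover the correct inhomogeneous Sobolev scale.

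Third, I would define the Duhamel solution map $\mathcal{T}$ and close a contraction on a small ball of the intersection space appearing in the statement. This reduces to the trilinear-type nonlinear bound $\|w^{-1}h^{-2}R(w\tilde\phi)\|_{Y}\lesssim \|\tilde\phi\|_{X}^{3}$, and its difference analog, whose proof combines: (a) the Sobolev algebra property of $H^{n/2}(\mathbb{R}^{m})$ applied to the smooth map $R$; (b) Hardy's inequality on $\mathbb{R}^{m}$ to absorb the $1/h^{2}\lesssim 1/r^{2}$ factor; (c) H\"older in the Strichartz pair $(p,q)$ to redistribute derivatives across the three factors. The main obstacle is exactly this nonlinear estimate at the scaling-critical regularity $n/2$: since $H^{n/2}(\mathbb{R}^{m})$ barely fails to embed in $L^{\infty}$, pointwise smallness of $\tilde\phi$ is unavailable, and one must quantify smallness through the auxiliary Strichartz norm $L^{p}H^{(n-1)/2}_{q}(w)$. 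The specific exponents in \eqref{eq:pqind} are designed precisely so that the linear and nonlinear estimates close simultaneously in both the massive and massless regimes, making the choice of function spaces essentially rigid.
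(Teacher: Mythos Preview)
Your reduction to a radial equation on $\mathbb{R}^{m}$, the identification of the potential $\mathcal{V}$, the norm equivalences, and the general fixed-point strategy all match the paper's approach. The gap is in your nonlinear estimate, and it is a real one.

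First, $H^{n/2}(\mathbb{R}^{m})$ is \emph{not} a Sobolev algebra: since $m=n+2k$ with $k\ge 1$, you have $n/2<m/2$, so neither the algebra property nor an $L^{\infty}$ embedding is available for $\tilde\phi$. Second, the coefficient in front of the cubic part is not controlled by $1/r^{2}$. Writing $R(s)=s^{3}\Gamma(s)$, the source becomes
\[
w^{-1}h^{-2}R(w\tilde\phi)=\frac{w^{2}}{h^{2}}\,\tilde\phi^{3}\,\Gamma(w\tilde\phi)
=\frac{r^{m-1}}{h^{n+1}}\,\tilde\phi^{3}\,\Gamma(w\tilde\phi),
\]
and since $h\ge cr$ gives $r^{m-1}/h^{n+1}\lesssim r^{2k-2}$, the weight in front of $\tilde\phi^{3}$ \emph{grows}, while the argument of $\Gamma$ behaves like $r^{k}\tilde\phi$. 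Hardy's inequality does not absorb this; quite the opposite.

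What the paper actually uses, and what your sketch is missing, is the \emph{radial} structure: Strauss-type weighted Sobolev inequalities for radial functions on $\mathbb{R}^{m}$ (Lemma~\ref{lem:radial} and Lemma~\ref{lem:radial33}) convert the growing weights into Sobolev regularity at exactly the right order. For instance $\|r^{k}\tilde\phi\|_{L^{\infty}}\lesssim\|\tilde\phi\|_{\dot H^{m/2-k}}=\|\tilde\phi\|_{\dot H^{n/2}}$ supplies the missing $L^{\infty}$ control needed inside $\Gamma$, and analogous weighted $L^{q}$ bounds handle the factors $\alpha(r)\tilde\phi$ with $\alpha\sim r^{k-1}$. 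These are then combined not with an algebra property but with Kato--Ponce (Lemma~\ref{lem:katoponce}) and a Moser estimate (Lemma~\ref{lem:coif}), and the specific Strichartz indices $(p,q)$ from~\eqref{eq:pqind} are chosen so that this chain of weighted H\"older/Sobolev splittings closes. Without the radial weighted embeddings the contraction estimate cannot close at critical regularity.
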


\begin{remark}[Scattering]\label{rem:scatt}
  It is not difficult to prove that the solutions
  constructed in Theorem \ref{the:main} scatter to solutions
  of the \emph{linear} equivariant equation
  \begin{equation*}
    \phi_{tt}-\phi_{rr}-(n-1)\frac{h'(r)}{h(r)}\phi_{r}=0
  \end{equation*}
  in $H^{\frac n2}\times H^{\frac n2-1}$
  as $t\to\pm \infty$, by standard arguments; we omit the details.
\end{remark}

\begin{remark}[Local existence with large data]\label{rem:LWP}
  By a simple modification in the proof
  one can show that the small data assumption can be replaced by the
  weaker assumption that the linear part of the flow is
  sufficiently small. This in particular implies existence and
  uniqueness of a time local solution for large data in the
  same regularity class
  (see Remark \ref{rem:LWPlargedata} for a sketch of the proof).
\end{remark}

Thus global existence of small equivariant wave maps
on admissible manifolds holds in the critical space
$H^{\frac n2}\times H^{\frac n2-1}$, as in the case of a flat
base manifold.
The solution enjoys additional
$L^{p}L^{q}$ integrability properties, determined
by the Strichartz estimates used in the proof.
This has the usual drawback that uniqueness holds only
in a restricted space.
\emph{Unconditional uniqueness}
in the critical space
without additional restrictions
was proved recently for general wave maps on Minkowski space
in \cite{MasmoudiPlanchon12-a}.
We conjecture that a similar result holds also in
our situation; as a
partial workaround, we prove that if the
regularity of the initial data is increased by
$\delta=\frac{1}{m+1}$ then uniqueness holds in
the space $CH^{\frac n2+\frac{1}{m+1}}$:

\begin{theorem}[Higher regularity and unconditional uniqueness]
  \label{the:uuu}
  Consider \eqref{eq:equivariant}, \eqref{eq:equivdata}
  under the assumptions of Theorem \ref{the:main},
  and let $0\le \delta<k$.

  If $h_{\infty}>0$ and
  $\| \phi_{0}\|_{H^{\frac{n}{2}+\delta}}+
    \|\phi_{1}\|_{H^{\frac{n}{2}-1+\delta}}$
  is sufficiently small, the problem
  has a unique global solution
  $\phi\in L^{\infty}H^{\frac{n}{2}+\delta}
    \cap CH^{\frac{n}{2}+\delta}
    \cap L^{p}H^{\frac{n-1}{2}+\delta}_{q}(w)$.
  Moreover, if $\delta\ge \frac{1}{m+1}$, this is
  the unique solution in $CH^{\frac n2+\delta}$.

  If $h_{\infty}=0$ and
  $\||D_{M}|^{\frac12} \phi_{0}\|_{H^{\frac{n-1}{2}+\delta}(M)}+
    \||D_{M}|^{-\frac12}\phi_{1}\|_{H^{\frac{n-1}{2}+\delta}(M)}$
  is sufficiently small, Problem
  \eqref{eq:equivariant}, \eqref{eq:equivdata}\
  has a unique global solution
  $\phi$ with
  $|D_{M}|^{\frac12}\phi\in L^{\infty}H^{\frac{n-1}{2}+\delta}(M)
    \cap CH^{\frac{n-1}{2}+\delta}(M)$
  and
  $\phi\in L^{p}H^{\frac{n-1}{2}+\delta}_{q}(w)$.
  Moreover, if
  $\delta\ge \frac{1}{m+1}$, this is the unique solution
  with $|D_{M}|^{\frac12}\phi\in CH^{\frac{n-1}{2}+\delta}$.
\end{theorem}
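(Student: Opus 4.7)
The strategy has two stages. The first is to reprove Theorem \ref{the:main} at the higher regularity level $\delta>0$ by a persistence-of-regularity argument. I would rerun the contraction mapping of Theorem \ref{the:main} in the stronger space
\[
  X_\delta = \bigl\{\phi : \|\phi\|_{L^\infty H^{\frac n2+\delta}} + \|\phi\|_{L^p H^{\frac{n-1}{2}+\delta}_q(w)} < \infty\bigr\},
\]
with the obvious modification involving $|D_M|^{\frac12}$ in the $h_\infty=0$ case. Two ingredients are required: first, Strichartz and smoothing estimates for the linear equivariant flow at level $\delta$ above critical, obtained by commuting fractional powers of $1-\Delta_M$ with the linear estimates already established in the paper; and second, a nonlinear bound showing that the source $F(\phi) = -\bar\ell\,g(\phi)g'(\phi)/h(r)^2$ maps $X_\delta$ into the dual Strichartz space at the same level, with a small constant under the smallness of the data. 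The restriction $\delta<k$ enters precisely here: an equivariant profile vanishes to order $k$ at $r=0$, and this is the number of derivatives one can afford in balancing the singular factor $1/h(r)^2$ against the vanishing of $\phi$.

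The second stage is unconditional uniqueness. Let $\phi\in CH^{\frac n2+\delta}$ be any solution with the prescribed data and $\delta\ge 1/(m+1)$. By localization in time it suffices to show that on any bounded interval $[0,T]$ one has $\phi\in L^p H^{\frac{n-1}{2}}_q(w)$, since the Strichartz-based uniqueness of Theorem \ref{the:main} then identifies $\phi$ with the solution already constructed. Time integrability is automatic from $C^0_t([0,T])\subset L^p_t([0,T])$, and the argument reduces to a spatial Sobolev-type embedding $H^{\frac n2+\delta}(M^n) \hookrightarrow H^{\frac{n-1}{2}}_q(w)$. Passing through the weighted realization $\phi\mapsto w^{-1}\phi$ on $\mathbb{R}^m$ with $m=n+2k$, a direct computation with \eqref{eq:pqind} shows that the resulting embedding is sharp precisely at $\delta=1/(m+1)$, which is the source of the numerical threshold.

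The main obstacle is the nonlinear estimate for $F(\phi)$ at the higher regularity level. Since $g(\phi)g'(\phi)$ is merely a smooth, generic function of $\phi$, one must combine a Moser or paraproduct decomposition of this composition with both the vanishing of equivariant maps at $r=0$ and the Hardy-type weight $1/h(r)^2$. The bound $\delta<k$ is exactly where this triple balance ceases to be available at the fractional level, and carrying out the bilinear bookkeeping carefully under these constraints is the delicate technical step.
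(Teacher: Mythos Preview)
Your first stage is essentially what the paper does: after the change of variables $\phi=w(r)\psi$, one reruns the contraction in the stronger space $X_\delta$ on $\mathbb{R}^m$, and the constraint $\delta<k$ appears when the radial weighted estimate (Lemma~\ref{lem:radial33}) is applied to $\alpha\psi$ at regularity $\frac{n-1}{2}+\delta$. So the regularity part is fine.

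The unconditional uniqueness argument, however, has a genuine gap: the Sobolev embedding you invoke does not hold at the threshold $\delta=\frac{1}{m+1}$. After passing to $\psi=w^{-1}\phi$ on $\mathbb{R}^m$, the inclusion $H^{\frac n2+\delta}(\mathbb{R}^m)\hookrightarrow H^{\frac{n-1}{2}}_q(\mathbb{R}^m)$ requires $\frac n2+\delta-\frac m2\ge\frac{n-1}{2}-\frac m q$, i.e.\ $\delta\ge\frac m2-\frac m q-\frac12$. With $q=b$ from \eqref{eq:pqind} one has $\frac m q=\frac{m-1}{2}-\frac{m+3}{4(m+1)}$, so the condition becomes $\delta\ge\frac{m+3}{4(m+1)}$, which is strictly larger than $\frac{1}{m+1}$ (e.g.\ for $m=5$ you need $\delta\ge\frac13$, not $\frac16$). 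So you cannot recover the Strichartz norm from $CH^{\frac n2+\delta}$ by an embedding alone, and your route to uniqueness fails at the claimed threshold.

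The paper proceeds differently. Given two solutions $\psi,\widetilde\psi\in C([0,T];H^{\frac n2+\delta})$, it writes the equation for $\chi=\widetilde\psi-\psi$ and applies the \emph{symmetric} Strichartz estimate with $p=q=\widetilde p=\widetilde q=\frac{2(m+1)}{m-1}$ on a short interval $I$. By H\"older and the radial Strauss inequality \eqref{eq:strauss}, the quadratic factors are controlled by $\|\psi\|_{L^{m+1}_I\dot H^{\frac n2+\frac{1}{m+1}}}$ and $\|\chi\|_{L^{m+1}_I\dot H^{\frac n2+\frac{1}{m+1}}}$; this is exactly where the number $\frac{1}{m+1}$ arises, since $\|\alpha v\|_{L^{m+1}}\lesssim\|v\|_{\dot H^{\frac n2+\frac{1}{m+1}}}$. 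These norms tend to $0$ as $|I|\to0$ by the a~priori $L^\infty_tH^{\frac n2+\delta}$ bound, so one closes an estimate of the form $\|\chi\|\le\tfrac12\|\chi\|$. The point is that the argument uses the equation for the difference and a time-localized Strichartz estimate, not a static embedding into the Strichartz space.
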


\begin{remark}[Examples of admissible manifolds]\label{rem:admman}
  In Section \ref{sec:applications} we discuss at some length the
  admissibility assumption. In particular we prove
  that suitable perturbations of
  admissible manifolds are also admissible; this allows to
  substantially enlarge the list of explicit examples.
  The following manifolds are included in the class:
  \begin{itemize}
    \item The euclidean $\mathbb{R}^{n}$ and,
    more generally, rotationally invariant,
    asymptotically flat spaces of dimension
    $n\ge3,$. The precise condition
    is the following: the radial component of
    the metric has the form $h_{\epsilon}(r)=r+\mu(r)$,
    with $\mu:\mathbb{R}^{+}\to \mathbb{R}$ such that
    for small $\epsilon>0$
    \begin{equation*}
      |\mu(r)|+r|\mu'(r)|+r^{2}|\mu''(r)|
      +r^{3}|\mu'''(r)|\le \epsilon r
      \quad\text{for all}\quad
      r>0
    \end{equation*}
    and
    \begin{equation*}
      \textstyle
      |\mu^{(j)}(r)|\lesssim r^{1-j}
      \quad\text{for}\quad
      r\gg1,
      \quad
      j\le[\frac{n-1}{2}]+2.
    \end{equation*}
    \item Real hyperbolic spaces $\mathbb{H}^{n}$
    with $n\ge3$, for which
    $h(r)=\sinh r$; more generally,
    rotationally invariant perturbations of $\mathbb{H}^{n}$
    with a metric $h_{\epsilon}(r)=\sinh r+\mu(r)$,
    with $\mu:\mathbb{R}^{+}\to \mathbb{R}$ such that
    for small $\epsilon>0$
    \begin{equation*}
      |\mu(r)|+|\mu'(r)|+|\mu''(r)|+|\mu'''(r)|\le
      \epsilon \bra{r}^{-3}\sinh r
      \quad\text{for all}\quad
      r>0
    \end{equation*}
    and
    \begin{equation*}
      |\mu^{(j)}(r)|\lesssim r^{-1}e^{r}
      \quad\text{for}\quad
      r\gg 1,
      \quad
      j\le[\frac{n-1}{2}]+2.
    \end{equation*}
    \item Some classes of rotationally invariant
    manifolds with a metric $h(r)$ of polynomial
    growth $h(r)\sim r^{M}$, wher $M$ can be any $M\ge1$.
  \end{itemize}
\end{remark}

\begin{remark}[Strichartz estimates]\label{rem:}
  The crucial tools in Theorems \ref{the:main},
  \ref{the:uuu} are smoothing and
  Strichartz estimates for wave equations
  defined on admissible manifolds, which are proved in
  Section \ref{sec:strichartz}. To our knowledge,
  Strichartz estimates on curved backgrounds were essentially
  known only for asymptotically flat manifolds,
  see e.g.
  \cite{StaffilaniTataru02-a},
  \cite{RobbianoZuily05-a},
  \cite{BoucletTzvetkov08-a}
  \cite{RodnianskiTao11-a},
  among the others.
  For the case of manifolds with a nonvanishing curvature
  at infinity, such estimates are known in the special
  case of real hyperbolic spaces, see
  \cite{AnkerPierfelice09-a},
  \cite{AnkerPierfeliceVallarino12-a}.
\end{remark}

\begin{remark}[The case $n=2$]\label{rem:2D}
  Our proof does not cover the case $n=2$. Indeed,
  the main obstruction is the smoothing estimate of
  Theorem \ref{the:smoo}, which fails when $n=2$, and
  is the crucial step in the proof of Strichartz estimates.
  It is indeed possible to prove a smoothing estimate also in the
  low dimensional case, but this requires a substantial modification
  in the argument (in particular, it is necessary to use time
  dependent Morawetz multipliers). We plan to address
  this problem in a further work.
\end{remark}

The plan of the paper is the following.
In Section \ref{sec:chvar} we transform the equivariant wave
map equation in an equation with potential defined on
$\mathbb{R}^{m}$, with $m=n+2k\ge5$. Since we need Strichartz
estimates at the level of $H^{\frac n2}$ regularity, we develop
some tools to commute derivatives with the flow, and the
lemmas to this purpose are proved in the same section.
In Section \ref{sec:strichartz} we prove smoothing and
Strichartz estimates for the transformed linear equation;
they hold under suitable assumptions on the potential,
which translate into the definition of admissible
manifold.
Section \ref{sec:fixedpoint} is devoted to
the proof of global existence with small data for a
radial nonlinear wave equation with critical nonlinearity
depending also on the radial variable.
Finally, in Section \ref{sec:applications}
we apply the result to the original wave map equation
and we discuss the definition of admissible manifold
in detail.

\subsection*{Acknowledgments}\label{sub:ack}
We would like to thank the Referees for the very useful suggestions
which helped to improve the paper.

\section{Reduction to a perturbed problem and equivalence
of norms}\label{sec:chvar}

The component $g(s)$ in the metric of the target manifold
is the restriction of an odd, smooth function on $\mathbb{R}$.
Thus we can write
\begin{equation*}
  \bar{\ell} g(s)g'(s)=\bar{\ell} s+s^{3}\Gamma(s)
\end{equation*}
with $\Gamma(s)$ smooth. Applying the change of variable
\begin{equation*}
  \phi(t,r)=\psi(t,r)\cdot w(r),
  \qquad
  w(r):=
  \frac{r^{k+\frac{n-1}{2}}}{h(r)^{\frac{n-1}{2}}},
\end{equation*}
equation \eqref{eq:equivariant} reduces to
\begin{equation}\label{eq:perteq}
  \psi_{tt}-\psi_{rr}-\frac{m-1}{r}\psi_{r}+V(r)\psi+
  \frac{r^{m-1}}{h(r)^{n+1}}\psi^{3}
  \Gamma\left(\frac{r^{\frac{m-1}{2}}}{h^{\frac{n-1}{2}}}
  \psi\right)=0
\end{equation}
where $m=2k+n$ and
\begin{equation*}
  V(r)=
  \frac{n-1}{2}
  \left[
    \frac{h''}{h}
    +\frac{n-3}{2}\left(
      \frac{h'^{2}}{h^{2}}-\frac{1}{r^{2}}
    \right)
  \right]
  +k(k+n-2)\left(
    \frac{1}{h^{2}}-\frac{1}{r^{2}}
  \right).
\end{equation*}
Note that the function $h(r)$ can be extended to a smooth
odd function on $\mathbb{R}$ and satisfies $h(0)=h''(0)=0$,
$h'(0)=1$. As a consequence,
the potential $V(r)$ is smooth at the origin,
and has a critical decay $\sim r^{-2}$
in general. Our main goal will be now to prove
Strichartz estimates for the transformed equation
\eqref{eq:perteq}, and this will be the object of the
next section. Note that we require estimates at the level
of the $H^{\frac n2}$ norm, thus we need to be able to commute
$\frac n2$ derivatives with functions of the operator
$-\Delta+V$; the rest of the section is devoted to
the necessary tools for this step.

In following, $c(x)$ is a measurable
real valued function on $\mathbb{R}^{m}$ and the operator
\begin{equation*}
  L=-\Delta+c(x)
\end{equation*}
is a selfadjoint Schr\"{o}dinger
operator on $L^{2}(\mathbb{R}^{m})$.
We shall make different sets of assumptions
on the potential $c(x)$, but in all cases they
imply that $-\Delta+c(x)$ has a unique
selfadjoint extension by well known results.
This fact will be tacitly used, and in particular we shall
use the spectral calculus associated to the operator
$-\Delta+c(x)$ without further notice.

The first result is
contained in \cite{BurqPlanchonStalker04-a} but we include
a short proof for completeness:

\begin{lemma}[]\label{lem:sobhom}
  Let $m\ge3$ and assume
  \begin{equation}\label{eq:assc}
    \textstyle
    \frac{C}{|x|^{2}}\ge c(x)\ge-\frac{(m-2)^{2}-\delta}{4|x|^{2}}
  \end{equation}
  for some $C,\delta>0$. Then $-\Delta+c$ is a positive operator,
  and for all $|s|\le1$ we have the equivalences
  \begin{equation}\label{eq:equivsovhom}
    \|(-\Delta+c)^{\frac s2}u\|_{L^{2}}
    \simeq
    \|u\|_{\dot H^{s}},
    \qquad
    \|(1-\Delta+c)^{\frac s2}u\|_{L^{2}}
    \simeq
    \|u\|_{H^{s}}.
  \end{equation}
\end{lemma}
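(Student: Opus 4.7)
The plan is to obtain the equivalences from two simple facts: Hardy's inequality and the Löwner--Heinz operator monotonicity inequality.

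First I would dispose of the $s=1$ case, which is the only place where something must actually be checked. Fix $u\in C^{\infty}_{c}(\mathbb{R}^{m}\setminus\{0\})$. By Hardy,
\begin{equation*}
  \int_{\mathbb{R}^{m}}\frac{|u|^{2}}{|x|^{2}}\,dx
  \le \frac{4}{(m-2)^{2}}\|\nabla u\|_{L^{2}}^{2}.
\end{equation*}
Using the upper bound $c(x)\le C|x|^{-2}$ and the lower bound $c(x)\ge -\frac{(m-2)^{2}-\delta}{4}|x|^{-2}$ from \eqref{eq:assc}, this gives, as quadratic forms on $C^{\infty}_{c}$,
\begin{equation*}
  \tfrac{\delta}{(m-2)^{2}}(-\Delta)\ \le\ -\Delta+c
  \ \le\ \Bigl(1+\tfrac{4C}{(m-2)^{2}}\Bigr)(-\Delta).
\end{equation*}
In particular $-\Delta+c$ is strictly positive, Friedrichs extends uniquely to a positive selfadjoint operator, and its form domain coincides with $\dot H^{1}$. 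Taking square roots of the quadratic-form inequality gives the case $s=1$ of \eqref{eq:equivsovhom}.

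Next, I would propagate this to $0<s<1$ by the Löwner--Heinz inequality: if $0\le A\le B$ then $A^{s}\le B^{s}$ for $s\in[0,1]$. Applying it to the double-sided comparison above (and to its reciprocal, noting that $-\Delta+c$ and $-\Delta$ have equivalent form domains, hence their inverses compare in the opposite order on the dual space), one obtains
\begin{equation*}
  \bigl\langle (-\Delta+c)^{s}u,u\bigr\rangle
  \ \simeq\ \bigl\langle (-\Delta)^{s}u,u\bigr\rangle
  = \|u\|_{\dot H^{s}}^{2},
\end{equation*}
which is the homogeneous equivalence for $0\le s\le 1$. The case $-1\le s<0$ then follows by duality, using the already established equivalence at $|s|\le 1$ on the dual side together with the selfadjointness of the fractional powers.

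For the inhomogeneous equivalence, the same comparison gives
\begin{equation*}
  \min\{1,\tfrac{\delta}{(m-2)^{2}}\}(1-\Delta)
  \ \le\ 1-\Delta+c\ \le\
  \max\{1,1+\tfrac{4C}{(m-2)^{2}}\}(1-\Delta),
\end{equation*}
and since both operators have spectrum bounded away from $0$ (they are $\ge 1$ up to the equivalence constants), the same Löwner--Heinz/duality argument applies on the full range $|s|\le 1$. The main (mild) obstacle is verifying that the form comparison does extend to fractional powers in both directions; this is precisely the content of Löwner--Heinz, and once this is invoked the proof is essentially a one-line computation from Hardy.
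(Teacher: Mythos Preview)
Your proof is correct and follows the same skeleton as the paper's: Hardy's inequality yields the two-sided form comparison $-\Delta+c\simeq -\Delta$ and hence the case $s=1$, duality handles $s=-1$, and the intermediate values are filled in. The only difference is in this last step: you appeal to L\"owner--Heinz operator monotonicity ($0\le A\le B\Rightarrow A^{s}\le B^{s}$ for $s\in[0,1]$) to propagate the form inequality to fractional powers, while the paper simply interpolates between the endpoints $s=\pm1$. Both arguments are one-liners once the $s=1$ case is in hand; L\"owner--Heinz is arguably more direct since it avoids setting up an analytic family, whereas interpolation has the advantage of extending verbatim to $L^{p}$-based norms should that ever be needed. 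Either way the substance of the proof is the Hardy step, and there the two agree.
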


\begin{proof}
  By Hardy's inequality we have
  $\|\nabla u\|^{2}_{L^{2}}
  \gtrsim(Lu,u) \gtrsim\|\nabla u\|^{2}_{L^{2}}$
  and this implies
  the case $s=1$. The case $s=-1$ is obtained by duality,
  and the remaining cases follow by interpolation.
  The proof of the second equivalence is almost identical.
\end{proof}

When the potential is smoother, we have a more general result
for higher order nonhomogeneous norms. The following estimate is
not sharp but sufficient for our purposes.
We use the notation $\lceil s\rceil$ for the least integer
$\ge s$.

\begin{lemma}\label{sob1}
  Let $s\ge0$,  $1<p<\infty$ and
  assume $c(x)$ has bounded derivatives up to the order
  $2\lceil s\rceil-2$.
  Then there exists constants
  $K_{0},C$ depending on $s,p$ and on the potential $c(x)$
  such that, for all $K\ge K_{0}$,
  \begin{equation}\label{eq1-sob1}
     C^{-1}
     \|(K-\Delta)^{s} u\|_{L^p} \le
     \|(K-\Delta+c)^s u\|_{L^p} \le
     C\|(K-\Delta)^{s} u\|_{L^p}.
  \end{equation}
\end{lemma}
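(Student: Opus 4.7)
My plan is to treat integer values $s = N$ first by induction on $N$, and then extend to all real $s \ge 0$ by Stein's complex interpolation.

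\emph{Integer case.} Assume \eqref{eq1-sob1} at level $N-1$, and apply it to $v = (K-\Delta+c)u$. This reduces the claim at level $N$ to the single estimate
\[
  \|(K-\Delta)^{N-1}(cu)\|_{L^p}\le C K^{-1}\|(K-\Delta)^N u\|_{L^p},
\]
for $K$ large. To prove this I would expand $(K-\Delta)^{N-1}=\sum_{j=0}^{N-1}\binom{N-1}{j}K^{N-1-j}(-\Delta)^j$ and apply the Leibniz rule to each $\Delta^j(cu)$, obtaining terms of the form $K^{N-1-j}(\partial^\alpha c)(\partial^\beta u)$ with $|\alpha|+|\beta|=2j$ and $|\alpha|\le 2(N-1)=2N-2$, which is exactly the regularity assumed on $c$. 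Controlling $\partial^\alpha c$ in $L^\infty$, using Mikhlin's theorem to get $\|\partial^\beta u\|_{L^p}\lesssim \|(K-\Delta)^{|\beta|/2}u\|_{L^p}$ uniformly in $K$ (via the scaling $\xi = \sqrt K\,\eta$ in the symbol $\xi^\beta(K+|\xi|^2)^{-|\beta|/2}$), and using $\|(K-\Delta)^a u\|_{L^p}\le K^{a-N}\|(K-\Delta)^N u\|_{L^p}$ for $a\le N$, each term picks up the factor $K^{N-1-j+|\beta|/2-N} = K^{-1-|\alpha|/2}\le K^{-1}$. Summing over the finitely many terms gives the claim, and both inequalities in \eqref{eq1-sob1} follow for $K\ge K_0$ provided $K_0$ is chosen large enough.

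\emph{Fractional case.} I would invoke Stein's complex interpolation theorem applied to the analytic operator-valued family
\[
  T_z := (K-\Delta+c)^{z}(K-\Delta)^{-z},
  \qquad 0\le \operatorname{Re}z\le N,\ N:=\lceil s\rceil.
\]
On $\operatorname{Re}z = N$, $T_z$ is bounded on $L^p$ with polynomial growth in $|\operatorname{Im}z|$, by the integer case together with $L^p$-boundedness of $(K-\Delta)^{-i\tau}$. On $\operatorname{Re}z = 0$, $T_z$ is a composition of two imaginary powers, each bounded on $L^p$ with polynomial growth in $|\tau|$: for $K-\Delta$ this is classical Mikhlin, and for $K-\Delta+c$ it follows from spectral multiplier theorems valid under Gaussian heat kernel bounds, which hold since $c$ is bounded. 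Stein's theorem then delivers the upper bound in \eqref{eq1-sob1} at $z = s$, and the reverse inequality follows by symmetry, applying the same argument to the inverse family $T_z^{-1}$.

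\emph{Main obstacle.} The subtle point is the Leibniz/Mikhlin bookkeeping at the integer level: one must see that the worst term requires only $2N-2$ bounded derivatives of $c$ (rather than $2N$), because replacing a factor $(K-\Delta)$ by $c$ frees two orders of differentiation which, through the Mikhlin rescaling, provide exactly the $K^{-1}$ gain needed to absorb the perturbation. Once the integer case and the imaginary-power bounds are in hand, the complex interpolation step is routine.
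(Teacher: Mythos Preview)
Your proposal is correct and follows essentially the same route as the paper: a Leibniz/Mikhlin argument for integer $s$ exploiting the $K^{-1}$ gain from each factor of $c$, followed by Stein interpolation with Gaussian heat-kernel bounds for the imaginary powers $(K-\Delta+c)^{i\tau}$. The only cosmetic difference is that you organize the integer case by induction on $N$ (reducing to a single perturbation estimate $\|(K-\Delta)^{N-1}(cu)\|_{L^p}\lesssim K^{-1}\|(K-\Delta)^N u\|_{L^p}$), whereas the paper expands $(K-\Delta+c)^k$ directly into $(K-\Delta)^k u$ plus a sum of terms $K^h\partial^\alpha(c^\ell)\partial^\beta u$; both lead to the same counting of derivatives on $c$ (at most $2N-2$) and the same absorption for $K\gg\|c\|_{W^{2N-2,\infty}}$.
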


\begin{proof}
  In the course of the proof, the letter $C$
  denotes several constants which are independent of $K$.
  Consider first the case $s=k>0$ is an integer.
  We can write
  \begin{equation}\label{eq2-sob1}
    \textstyle
    (K-\Delta+c)^ku=(K-\Delta)^k u+\sum\nolimits
    C\cdot K^{h} \partial^{\alpha}(c^{\ell})
    (\partial^\beta u),
  \end{equation}
  where the sum extends over all multiindices $\alpha,\beta$
  and integers $h,\ell$ such that
  \begin{equation*}
    \frac12(
      |\alpha|+|\beta|)+h+\ell=k,
    \qquad
    \ell\ge1.
  \end{equation*}
  Note that $|\alpha| \le 2k-2$, so that
  $\|\partial^{\alpha}(c^{\ell}) \|_{L^{\infty}}\le
    C\|c\|_{W^{2k-2,\infty}}^{\ell}$.
  If we take
  \begin{equation*}
    K_{1}:= \|c\|_{W^{2k-2,\infty}}
  \end{equation*}
  we can estimate the $L^p$-norm of the generic term
  of the sum in \eqref{eq2-sob1} as follows
  \begin{equation*}
    CK^{h}
    \|\partial^{\alpha}(c^{\ell})
      \partial^\beta u\|_{L^{p}}
    \le
    CK^{h}K_{1}^{\ell}
    \|\partial^{\beta}u\|_{L^p}=
    C(K_{1}/K)^{\ell}K^{h+\ell}
    \|\partial^{\beta}u\|_{L^p}.
  \end{equation*}
  By the Mikhlin multiplier theorem and
  recalling that $|\beta|+2h+2\ell\le 2k$, this can be
  estimated with
  $$ \le C (K_{1}/K)^{\ell} \| (K-\Delta)^{k} u \|_{L^p}.$$
  Thus if we take $K\gg K_{1}=\|c\|_{W^{2k-2,\infty}}$
  we obtain from \eqref{eq2-sob1} that
  \begin{equation*}
    \textstyle
    (K-\Delta+c)^ku=(K-\Delta)^k u+I
    \quad\text{where}\quad
    \|I\|_{L^{p}}\le \epsilon
    \|(K-\Delta)^k u\|_{L^{p}},
    \quad
    \epsilon<1
  \end{equation*}
  and this cocnludes the proof in the case $s=k$.

  If $s$ is not an integer, we consider the analytic
  family of operators $T_{z}=(K-\Delta+c)^{z}(K-\Delta)^{-z}$
  and we apply Stein interpolation. In this step, the
  $L^{p}$ boundedness of the operators
  $(K-\Delta+c)^{iy}$ for $y\in \mathbb{R}$ follows
  e.g.~from the general method of
  \cite{CacciafestaDAncona12-a}
  since the heat kernel $e^{-t(K-\Delta+c)}$ satisfies
  an upper gaussian estimate (note that $c+K\ge0$).
\end{proof}

From Mikhlin-H\"{o}rmander we know that
\begin{equation}\label{eq:MH}
  \|(1-\Delta)^{s}v\|_{L^{p}}\simeq
  \|(K-\Delta)^{s}v\|_{L^{p}}
\end{equation}
for all $K>0$ and $1<p<\infty$, with a constant depending on
$K$. A similar property holds for the operators
$(K-\Delta+c)^{s}$, at least in the case $p=2$ and
if the operator is positive:

\begin{lemma}[]\label{lem:irrelevantM}
  Let $m\ge3$, $s\in \mathbb{R}$ and
  assume $c(x)$ satisfies \eqref{eq:assc}. Then for all $K>0$
  we have the equivalence
  \begin{equation}\label{eq:equivbrac}
    \|(K-\Delta+c)^{s}v\|_{L^{2}(\mathbb{R}^{m})}
    \simeq
    \|(1-\Delta+c)^{s}v\|_{L^{2}(\mathbb{R}^{m})}.
  \end{equation}
\end{lemma}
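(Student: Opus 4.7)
The plan is to reduce everything to an elementary pointwise inequality on the spectrum via functional calculus, exploiting that the Hardy-type lower bound on $c$ makes $L=-\Delta+c$ a nonnegative selfadjoint operator.

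First I would verify that $L$ is nonnegative. Combining Hardy's inequality $\|\nabla u\|_{L^2}^2 \ge \frac{(m-2)^2}{4}\int |u|^2/|x|^2$ with the assumption $c(x) \ge -\frac{(m-2)^2 - \delta}{4|x|^2}$ yields
\[
(Lu,u) = \|\nabla u\|_{L^2}^2 + \int c|u|^2 \,dx \ge \frac{\delta}{(m-2)^2}\|\nabla u\|_{L^2}^2 \ge 0,
\]
exactly as in the proof of Lemma \ref{lem:sobhom}. Thus $L$ has a nonnegative selfadjoint realization with spectrum contained in $[0,\infty)$, and for any Borel function $F$ on $[0,\infty)$ the spectral theorem gives
\[
\|F(L)v\|_{L^2}^2 = \int_0^{\infty} |F(\lambda)|^2\, d\mu_v(\lambda),
\]
where $\mu_v$ is the spectral measure associated to $v$.

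Second, I would apply this with $F_K(\lambda) := (K+\lambda)^s$ and $F_1(\lambda) := (1+\lambda)^s$, and reduce the whole statement to the pointwise equivalence
\[
F_K(\lambda) \simeq F_1(\lambda) \qquad \text{uniformly in } \lambda \ge 0,
\]
with constants depending on $K$ and $s$ but not on $\lambda$. This is elementary: the map $\lambda \mapsto (K+\lambda)/(1+\lambda)$ is monotone on $[0,\infty)$, takes the value $K$ at $\lambda=0$, and tends to $1$ as $\lambda \to \infty$, so its range is the closed interval between $1$ and $K$. Raising to the real power $s$ gives
\[
\min(K,1)^{|s|}\cdot(1+\lambda)^s \;\le\; (K+\lambda)^s \;\le\; \max(K,1)^{|s|}\cdot(1+\lambda)^s.
\]
Squaring and integrating against $d\mu_v$ then yields
\[
\|(K-\Delta+c)^s v\|_{L^2}^2 = \int_0^\infty (K+\lambda)^{2s} d\mu_v(\lambda) \simeq \int_0^\infty (1+\lambda)^{2s} d\mu_v(\lambda) = \|(1-\Delta+c)^s v\|_{L^2}^2,
\]
which is the desired equivalence \eqref{eq:equivbrac}.

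There is really no serious obstacle here, since the restriction $p=2$ allows us to bypass Mikhlin-type multiplier theorems (which would be the hard route, as was used in Lemma \ref{sob1} where smoothness of $c$ was needed). The only subtle point worth flagging is the need to know in advance that $L$ is selfadjoint and nonnegative on the natural domain, but this is already implicit in the standing convention of the section (cf.\ the remark preceding Lemma \ref{lem:sobhom}) together with the Hardy computation above.
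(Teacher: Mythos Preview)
Your proof is correct and takes a more direct route than the paper. The paper reduces via complex interpolation to half-integer $s$, then by commutativity of $(K-\Delta+c)^{1/2}$ and $(1-\Delta+c)^{1/2}$ to the single case $s=\tfrac12$, which it handles via the quadratic-form equivalence $((K-\Delta+c)v,v)\simeq K\|v\|_{L^2}^2+\|\nabla v\|_{L^2}^2$ from Lemma~\ref{lem:sobhom}. Your argument bypasses interpolation entirely: once $L=-\Delta+c$ is known to be nonnegative selfadjoint, the spectral theorem reduces the $L^2$ equivalence to the elementary pointwise bound $(K+\lambda)^s\simeq(1+\lambda)^s$ on $[0,\infty)$. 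This is cleaner and makes transparent why no smoothness of $c$ is needed here (in contrast to Lemma~\ref{sob1}); the paper's interpolation detour is really doing the same thing in a more roundabout way.

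One minor slip: your displayed constants $\min(K,1)^{|s|}$ and $\max(K,1)^{|s|}$ are only correct for $s\ge0$. For $s<0$ the ratio $[(K+\lambda)/(1+\lambda)]^{s}$ lies between $\max(K,1)^{-|s|}$ and $\min(K,1)^{-|s|}$ instead. This does not affect the argument, since all you need is that the ratio is bounded above and below by positive constants independent of $\lambda$.
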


\begin{proof}
  By complex interpolation (which does not require a
  gaussian estimate since we are in the elementary $L^{2}$
  case) it is sufficient to prove the
  equivalence when $s$ is a half integer; and since
  $(K-\Delta+c)^{\frac12}$ and $(1-\Delta+c)^{\frac12}$
  commute, it is
  sufficient to prove it for $s=\frac12$.
  But in this case the equivalence is obvious since
  \begin{equation*}
    ((K-\Delta+c)v,v)\simeq
    K\|v\|_{L^{2}}^{2}+\|\nabla v\|_{L^{2}}^{2}
  \end{equation*}
  by Lemma \ref{lem:sobhom}.
\end{proof}

\begin{lemma}[]\label{sob2}
  Let $m\ge3$, $s\ge0$, and assume $c(x)$ satisfies
  for some $C,\delta,C_{0}>0$ condition
  \eqref{eq:assc} and
  \begin{equation}\label{eq:hardyassc}
    |\partial^{\alpha}c(x)|\le C_{0}\bra{x}^{-1},
    \qquad
    |\alpha|\le\lfloor s\rfloor+2.
  \end{equation}
  Then, using the notations
  $|D_{c}|=(-\Delta+c)^{\frac12}$ and
  $\bra{D_{c}}=(1-\Delta+c)^{\frac12}$,
  the following equivalences hold:
  \begin{equation}\label{eq1-lem2}
    \||D|^\frac12\langle D \rangle ^{s} u\|_{L^2}
    \simeq
    \||D_c|^\frac12\bra{D_{c}}^{s} u\|_{L^2}
    \simeq
    \||D|^\frac12\bra{D_{c}}^{s} u\|_{L^2}
  \end{equation}
  and
  \begin{equation}\label{eq1b-lem2}
    \||D|^{-\frac12}\langle D \rangle ^{s} u\|_{L^2}
    \simeq
    \||D_c|^{-\frac12}\bra{D_{c}}^{s} u\|_{L^2}
    \simeq
    \||D|^{-\frac12}\bra{D_{c}}^{s} u\|_{L^2}
  \end{equation}
  with implicit constants depending on $c(x)$ and $s$.
\end{lemma}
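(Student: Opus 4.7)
The two equivalence chains \eqref{eq1-lem2} and \eqref{eq1b-lem2} are parallel, and each splits into a \emph{middle} and an \emph{outer} equivalence. For the middle ones, namely
$$
\||D_c|^{\pm 1/2}\bra{D_c}^s u\|_{L^2}\simeq \||D|^{\pm 1/2}\bra{D_c}^s u\|_{L^2},
$$
I would simply apply Lemma \ref{lem:sobhom} with exponent $\pm\tfrac12$ (both inside the range $|s|\le 1$) to the auxiliary function $v:=\bra{D_c}^s u$; this step uses only the pointwise hypothesis \eqref{eq:assc} and costs nothing. After this reduction the entire content of the lemma is the outer equivalence.

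\smallskip

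For the outer equivalence I would set $u=\langle D\rangle^{-s} w$, which rewrites both statements as the claim that
$$
T:=\bra{D_c}^{s}\langle D\rangle^{-s} = (1-\Delta+c)^{s/2}(1-\Delta)^{-s/2}
$$
and its inverse are bounded on $\dot H^{1/2}$ (for \eqref{eq1-lem2}) and on $\dot H^{-1/2}$ (for \eqref{eq1b-lem2}). Boundedness of $T^{\pm 1}$ on $L^{2}$ is already available: Lemma \ref{sob1} with $p=2$ and $K$ sufficiently large gives the comparison between $(K-\Delta+c)^{s/2}$ and $(K-\Delta)^{s/2}$, and Lemma \ref{lem:irrelevantM} together with Mikhlin--H\"ormander then allows to replace $K$ by $1$. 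To promote this to $\dot H^{\pm 1/2}$, I would first treat integer $s=k$ by the same binomial-type expansion used in Lemma \ref{sob1},
$$
(1-\Delta+c)^{k}=(1-\Delta)^{k}+\sum C_{\alpha,\beta,\ell,h}\,\partial^{\alpha}(c^{\ell})\,\partial^{\beta},
$$
with $|\alpha|\le 2k-2$ and $|\beta|\le 2k-2$. Each remainder term, composed with $(1-\Delta)^{-k}$, becomes a bounded multiplier of order $|\beta|-2k\le -2$ followed by multiplication by $\partial^{\alpha}(c^{\ell})$. The decay hypothesis \eqref{eq:hardyassc} yields $|\partial^{\alpha}(c^{\ell})|\lesssim \bra{x}^{-1}$, and Hardy's inequality $\||x|^{-1}v\|_{L^{2}}\lesssim \||D|v\|_{L^{2}}$ together with its dual then transforms this weight into a half-derivative, showing that each remainder is bounded $\dot H^{1/2}\to \dot H^{1/2}$ (and $\dot H^{-1/2}\to \dot H^{-1/2}$). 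Non-integer $s$ is handled by Stein complex interpolation on the analytic family $T_{z}:=\bra{D_c}^{z}\langle D\rangle^{-z}$, and the $\dot H^{-1/2}$ case finally follows from the $\dot H^{1/2}$ case by duality, since $T^{*}$ has the same structural form as $T$ (with real $c$).

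\smallskip

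\emph{The main obstacle.} The $L^{2}$ bounds on $T^{\pm 1}$ are inherited almost for free from Lemmas \ref{sob1} and \ref{lem:irrelevantM}; the substantive difficulty is controlling the remainder terms in the \emph{homogeneous} norms $\dot H^{\pm 1/2}$, where low-frequency behaviour is not forgiving and cannot be absorbed by $\bra{D}^{s}$-type estimates. The decay \eqref{eq:hardyassc}, $|\partial^{\alpha}c|\lesssim \bra{x}^{-1}$, is the precise strength that lets Hardy convert a multiplication into a half-derivative and thereby bridge $\dot H^{1/2}$ and $\dot H^{-1/2}$; weaker decay would break the argument at exactly this point.
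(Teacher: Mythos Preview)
Your outline is close to the paper's: both use Lemma~\ref{lem:sobhom} for the middle equivalences and an expansion--Hardy--interpolation scheme for the outer ones. The paper, however, inserts one extra interpolation that you skip: before interpolating in $s$, it interpolates in the exponent of $|D|^{z}$ between $z=0$ and $z=1$, so that the claim at $z=\tfrac12$ is reduced to the claim at $z=1$. The integer-$s$ step then becomes the purely local equivalence $\|\nabla(K-\Delta)^{k}v\|_{L^{2}}\simeq\|\nabla(K-\Delta+c)^{k}v\|_{L^{2}}$, proved by induction on $k$; Leibniz is exact for $\nabla$, and Hardy enters cleanly as $\||x|^{-1}v\|_{L^{2}}\lesssim\|\nabla v\|_{L^{2}}$. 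Your route keeps the nonlocal $|D|^{1/2}$ throughout, so the integer case forces you to control $\||D|^{1/2}(a\,g)\|_{L^{2}}$ with $|a|\lesssim\bra{x}^{-1}$; this is workable but noticeably messier than the paper's version, and your one-line ``Hardy converts weight into half-derivative'' underestimates what is needed.

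There is also a real gap. Your expansion gives $T_{2k}=I+R\,(1-\Delta)^{-k}$ with the remainder \emph{bounded} on $\dot H^{1/2}$, hence $T_{2k}$ is bounded---but this does not yield $T_{2k}^{-1}$ bounded, and your duality remark does not supply it either: the adjoint of $T$ acting on $\dot H^{1/2}$ is $T^{*}=\bra{D}^{-s}\bra{D_c}^{s}$ on $\dot H^{-1/2}$, which is neither $T$ nor $T^{-1}$. The paper closes this by carrying the large-$K$ parameter into the $\dot H^{\pm1}$ argument itself (not only the $L^{2}$ one): the remainder then picks up a factor $C_{0}/K$ and is absorbed, giving both inequalities simultaneously. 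For \eqref{eq1b-lem2} the paper does not invoke duality but runs the parallel induction with $|D|^{-1}$ in place of $|D|$, using the dual Hardy bound $\||D|^{-1}(cv)\|_{L^{2}}\lesssim C_{0}\||D|^{-1}v\|_{L^{2}}$.
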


\begin{proof}
  Note that it is sufficient to prove \eqref{eq1-lem2}
  with $(K-\Delta)^{\frac12}$ and $(K-\Delta+c)^{\frac12}$
  in place of $\bra{D}$ and $\bra{D_{c}}$
  respectively, with an arbitrarily large $K$, thanks to
  the equivalence \eqref{eq:equivbrac}. In the following
  we shall use the same notation for all values of $K$.

  The claim is that for a fixed $s\ge0$ and $z=\frac12$
  the operators
  \begin{equation*}
    |D|^{z}\bra{D}^{s}\bra{D_{c}}^{-s}|D_{c}|^{-z},
    \qquad
    |D_{c}|^{z}\bra{D_{c}}^{s}\bra{D}^{-s}|D|^{-z}
  \end{equation*}
  are bounded on $L^{2}$. By interpolation it is sufficient
  to prove the case $z=1$. Thus, if we define the analytic
  families of operators
  \begin{equation*}
    T_{z}=|D|\bra{D}^{z}\bra{D_{c}}^{-z}|D_{c}|^{-1},
    \qquad
    S_{z}=|D_{c}|\bra{D_{c}}^{z}\bra{D}^{-z}|D|^{-1}
  \end{equation*}
  we have to prove that $T_{z},S_{z}$
  are $L^{2}$ bounded when $z=s$. Again by interpolation,
  it is sufficient to prove that $T_{z}$ and $S_{z}$
  are bounded when $z=2k$ is an even integer.
  Thus we are reduced to the estimate
  \begin{equation*}
    \||D| (K-\Delta)^{k}v\|_{L^{2}}\simeq
    \||D_{c}| (K-\Delta+c)^{k}v\|_{L^{2}}.
  \end{equation*}
  Using Lemma \ref{lem:sobhom} we can replace $|D_{c}|$
  with $|D|$ at the r.h.s., and the claim is implied by
  \begin{equation}\label{eq:claim}
    \|\nabla (K-\Delta)^{k}v\|_{L^{2}}\simeq
    \|\nabla (K-\Delta+c)^{k}v\|_{L^{2}}.
  \end{equation}
  We prove \eqref{eq:claim} by induction on $k$, using
  the equivalence
  \begin{equation*}
    \textstyle
    \|(K-\Delta)^{k}v\|_{L^{2}}
    \simeq
    K^{k}\|v\|_{L^{2}}+\|v\|_{\dot H^{2k}}
    \simeq
    \sum_{j=0}^{2k}K^{k-j/2}\|v\|_{\dot H^{j}}
  \end{equation*}
  (implicit constants independent of $K$). By the
  induction step $k\to k+1$ we obtain
  \begin{equation}\label{eq:claim2}
  \begin{split}
    \|\nabla (K-\Delta+c)^{k+1}v\|_{L^{2}}
    \simeq &
    \|(K-\Delta)^{k}\nabla[(K-\Delta+c)v]\|_{L^{2}}
    =
    \\
    =&\|(K-\Delta)^{k+1}\nabla v+
        (K-\Delta)^{k}\nabla(cv)\|_{L^{2}}.
  \end{split}
  \end{equation}
  We have
  \begin{equation*}
    \textstyle
    \|(K-\Delta)^{k}\nabla(cv)\|_{L^{2}}
    \simeq
    K^{k}\|cv\|_{\dot H^{1}}+
    \|cv\|_{\dot H^{2k+1}}
  \end{equation*}
  and also, using assumption \eqref{eq:hardyassc},
  \begin{equation*}
    \|cv\|_{\dot H^{j}}
    \lesssim
    C_{0}\||x|^{-1}v\|_{L^{2}}+
    C_{0}\|\nabla v\|_{H^{j-1}}
    \simeq
    C_{0}\|\nabla v\|_{H^{j-1}}
  \end{equation*}
  by Hardy's inequality, so that
  \begin{equation*}
    \textstyle
    \|(K-\Delta)^{k}\nabla(cv)\|_{L^{2}}
    \lesssim
    K^{k}C_{0}\|\nabla v\|_{L^{2}}+
    C_{0}\|\nabla v\|_{H^{2k}}
    \lesssim
    \frac{C_{0}}{K}
    \|(K-\Delta)^{k+1}\nabla v\|_{L^{2}}.
  \end{equation*}
  Taking $K\gg C_{0}$, we see that we can absorb the last
  term in \eqref{eq:claim2} and we obtain \eqref{eq:claim}.

  The proof of \eqref{eq1b-lem2} is analogous.
  Instead of \eqref{eq:claim} we arrive at
  \begin{equation*}
    \||D|^{-1}(K-\Delta)^{k}v\|_{L^{2}}
    \simeq
    \||D|^{-1}(K-\Delta+c)^{k}v\|_{L^{2}}.
  \end{equation*}
  As before, the induction step gives
  \begin{equation}\label{eq:claim2bis}
    \||D|^{-1} (K-\Delta+c)^{k+1}v\|_{L^{2}}
    \simeq
    \|(K-\Delta)^{k+1}|D|^{-1} v+
        (K-\Delta)^{k}|D|^{-1}(cv)\|_{L^{2}}
  \end{equation}
  and we must absorb the last term by taking $K$ sufficiently
  large. We can write
  \begin{equation*}
    \textstyle
    \|(K-\Delta)^{k}|D|^{-1}(cv)\|_{L^{2}}
    \simeq
    K^{k}\||D|^{-1}(cv)\|_{L^{2}}+
    \|cv\|_{\dot H^{2k-1}}
  \end{equation*}
  and this must be controlled by the main term which is
  \begin{equation*}
    \textstyle
    \|(K-\Delta)^{k+1}|D|^{-1}v\|_{L^{2}}
    \simeq
    K^{k+1}\||D|^{-1}v\|_{L^{2}}+\|v\|_{\dot H^{2k+1}}
    \simeq
    \sum_{j=0}^{2k+2}K^{k+1-j/2}\|v\|_{\dot H^{j-1}}
  \end{equation*}
  Hence, using assumption \eqref{eq:hardyassc}, we have
  by Hardy's inequality
  \begin{equation*}
    \|cv\|_{\dot H^{2k-1}}
    \lesssim
    C_{0}\|v\|_{H^{2k}}
    \lesssim
    C_{0}K^{-\frac12}
    \|(K-\Delta)^{k+1}|D|^{-1}v\|_{L^{2}};
  \end{equation*}
  in a similar way we have, also by
  Hardy's inequality,
  \begin{equation*}
    \||D|c(x)|D|^{-1}v\|_{L^{2}}
    \simeq
    \|\nabla(c(x)|D|^{-1}v)\|_{L^{2}}
    \lesssim
    C_{0}\|v\|_{L^{2}}
  \end{equation*}
  and this estimate by duality is equivalent to
  \begin{equation*}
    \||D|^{-1}(cv)\|_{L^{2}}
    \lesssim
    C_{0}\||D|^{-1}v\|_{L^{2}}.
  \end{equation*}
  Summing up, we obtain
  \begin{equation*}
    \|(K-\Delta)^{k}|D|^{-1}(cv)\|_{L^{2}}
    \lesssim
    C_{0}(K^{-1}+K^{-\frac12})
    \|(K-\Delta)^{k+1}|D|^{-1}v\|_{L^{2}}
  \end{equation*}
  and taking $K$ sufficiently large in \eqref{eq:claim2bis}
  we conclude the proof.
\end{proof}

We shall also need a lemma relating Sobolev norms on
spaces with different dimension for radial functions,
which extends Lemma 1.3 in
\cite{ShatahTahvildar-Zadeh94-a}.

\begin{lemma}[]\label{lem:dimchange}
  Let $n\ge2$, $k\ge1$ be integers and $s\ge0$.
  Then for all radial functions $v(r)$ we have the
  equivalence of norms
  \begin{equation}\label{eq:dimch}
    \||x|^{k}v(|x|)\|_{\dot H^{s}(\mathbb{R}^{n}_{x})}
    \simeq
    \|v(|y|)\|_{\dot H^{s}(\mathbb{R}^{n+2k}_{y})}
  \end{equation}
  with implicit constants depending only on
  $s,n,k$.
\end{lemma}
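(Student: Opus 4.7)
The plan is to reduce the identity \eqref{eq:dimch} to a statement about a conjugated Schrödinger operator on $\mathbb{R}^{n+2k}$, to which Lemma~\ref{lem:sobhom} applies.

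\emph{Step 1: Intertwining identity.} A direct chain-rule computation, applied to a smooth radial $v(r)$ viewed through $(r^kv)(x)=|x|^k v(|x|)$, gives
\begin{equation*}
(-\Delta_n)(r^k v)=r^k\Bigl[(-\Delta_{n+2k})v-\tfrac{k(n+k-2)}{r^2}v\Bigr].
\end{equation*}
Setting $c(x):=-k(n+k-2)/|x|^2$ and $A:=-\Delta_{n+2k}+c$, this reads $r^{-k}(-\Delta_n)r^k=A$ on radial functions.

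\emph{Step 2: Unitary conjugation and functional calculus.} Thanks to the measure identity $r^{n+2k-1}dr=r^{2k}\cdot r^{n-1}dr$, the map $T:v\mapsto \lambda r^k v$ (with $\lambda=(|S^{n+2k-1}|/|S^{n-1}|)^{1/2}$) is a unitary isomorphism from $L^2_{\rm rad}(\mathbb{R}^{n+2k})$ onto $L^2_{\rm rad}(\mathbb{R}^n)$, and Step~1 gives $T A=(-\Delta_n) T$ on radial Schwartz functions, hence on the self-adjoint closures. Therefore the functional calculus yields, for every $s\ge0$ and radial $v$,
\begin{equation*}
\|r^k v\|_{\dot H^s(\mathbb{R}^n)}
=\|(-\Delta_n)^{s/2}(r^k v)\|_{L^2(\mathbb{R}^n)}
\simeq \|A^{s/2}v\|_{L^2(\mathbb{R}^{n+2k})}.
\end{equation*}
It thus suffices to prove $\|A^{s/2}v\|_{L^2(\mathbb{R}^{n+2k})}\simeq\|v\|_{\dot H^s(\mathbb{R}^{n+2k})}$ for radial $v$.

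\emph{Step 3: The range $|s|\le1$.} The algebraic identity $(n+2k-2)^2/4-k(n+k-2)=(n-2)^2/4$ shows that $c$ satisfies \eqref{eq:assc} on $\mathbb{R}^{n+2k}$ with Hardy excess $\delta=(n-2)^2>0$ (for $n\ge3$; the borderline case $n=2$ requires a separate radial-Hardy argument). Lemma~\ref{lem:sobhom} then gives $\|A^{s/2}v\|_{L^2}\simeq\|v\|_{\dot H^s(\mathbb{R}^{n+2k})}$ for $|s|\le1$, settling \eqref{eq:dimch} in this range.

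\emph{Step 4: Extension to $s>1$.} Proceed by induction on $\lfloor s\rfloor$. For the step $s\to s+2$, iterating Step~1 gives $(-\Delta_n)^{N+1}(r^kv)=r^k A^{N+1}v$, and expanding $(-\Delta_{n+2k}+c)^{N+1}$ produces, beyond the main term $(-\Delta_{n+2k})^{N+1}v$, a finite sum of terms of the form $P_\alpha(\partial^\beta c)\,\partial^\gamma v$ with $|\alpha|+|\beta|+|\gamma|\le2N$; since $|\partial^\beta c|\lesssim|x|^{-2-|\beta|}$, each such term is dominated in $L^2$ by a higher-order Hardy--Rellich inequality for radial functions in $\mathbb{R}^{n+2k}$, whose constants are strictly subcritical because the gap $(n-2)^2/4$ is inherited at every order. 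Non-integer $s$ are then recovered by Stein complex interpolation applied to the analytic family $A^z(-\Delta_{n+2k})^{-z}$, whose imaginary powers are $L^2$-bounded by the gaussian bound on $e^{-tA}$ (as in the proof of Lemma~\ref{sob1}).

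The main technical obstacle is the bookkeeping in Step~4: one must check that every commutator contribution admits a subcritical Hardy--Rellich bound in dimension $n+2k$, which is precisely what the strictly positive Hardy gap $(n-2)^2/4$ ensures at every order $N$.
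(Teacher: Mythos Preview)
Your conjugation approach via $r^{-k}(-\Delta_n)r^k=-\Delta_{n+2k}+c$ and Lemma~\ref{lem:sobhom} is genuinely different from the paper's argument, and Steps~1--3 are correct for $n\ge3$: they yield \eqref{eq:dimch} in the range $0\le s\le1$ cleanly.

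The gap is Step~4. The sentence ``the gap $(n-2)^2/4$ is inherited at every order'' is doing all the work and is not justified. Already at $s=2$ the naive triangle-inequality comparison $\|Av\|\simeq\|\Delta_m v\|$ requires $\lambda\,\|r^{-2}v\|_{L^2}<\|\Delta_m v\|_{L^2}$ with $\lambda=k(n+k-2)$; the sharp Hardy--Rellich constant in $\mathbb{R}^m$ gives $\|r^{-2}v\|_{L^2}\le\frac{4}{m(m-4)}\|\Delta_m v\|_{L^2}$, and one checks $\frac{4\lambda}{m(m-4)}<1$ iff $n(n-4)>0$, i.e.\ $n\ge5$. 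So for $n=2,3,4$ the perturbative induction does not start as written. A more careful expansion of $\|Av\|^2$ using the cross term, or improved radial constants, might salvage this, but that is precisely the computation you omit. The case $n=2$ is in addition excluded from Step~3, as you acknowledge.

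The paper avoids these constant issues by going in the opposite direction. First it reduces to $k=1$ by iterating \eqref{eq:dimch} in $k$ (a simplification your conjugation does not exploit). Then it interpolates between the trivial case $s=0$ and a \emph{large} integer $s=N>n$, rather than inducting upward from $s=1$. For $k=1$, $s=N$, the pointwise equivalence $\sum_{|\alpha|=N}|D^\alpha v|\simeq|\partial_r^N v|$ reduces the claim to
\[
\int_0^\infty r^{n-1}\bigl|r\partial_r^N v+N\partial_r^{N-1}v\bigr|^2\,dr\ \simeq\ \int_0^\infty r^{n+1}|\partial_r^N v|^2\,dr,
\]
and an explicit integration by parts shows the left side equals the right side plus $N(N-n)\int r^{n-1}|\partial_r^{N-1}v|^2\,dr$, which is nonnegative exactly because $N>n$. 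The reverse inequality is Hardy. This elementary identity works uniformly for all $n\ge2$ without any subcriticality hypothesis.
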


\begin{proof}
  The following pointwise equivalence is valid
  for all radial functions $v(r)$ and integers $N\ge0$:
  \begin{equation}\label{eq:pteq}
    \sum_{|\alpha|=N}|D^{\alpha}v(|x|)|
    \simeq
    |\partial_{r}^{N}v(|x|)|,
  \end{equation}
  where $\partial_{r}=\widehat{x}\cdot \nabla_{x}$ denotes the
  radial derivative and $\widehat{x}=\frac{x}{|x|}$;
  the implicit constants in \eqref{eq:pteq}
  depend on $N,n$ but not on $v$ or $x$.
  We prove \eqref{eq:pteq} by induction on $N$. For $N=1$
  it follows immediately from
  $\nabla v(|x|)=v'(|x|)\nabla|x|=v'(|x|)\widehat{x}$.
  Now assume the equivalence holds for some $N$; then we can write
  \begin{equation*}
    \sum_{|\alpha|=N+1}|D^{\alpha}v|
    \simeq
    \sum_{\ell=1}^{n}
    \sum_{|\beta|=N}|D^{\beta}\partial_{\ell}v|
    \simeq
    \sum_{\ell=1}^{n}|\partial_{r}^{N}\partial_{\ell}v|.
  \end{equation*}
  Since $\partial_{\ell}v=\widehat{x}_{\ell}v'$ and
  $\partial_{r}\widehat{x}_{\ell}=0$, we have
  $\partial_{r}\partial_{\ell}v=\partial_{\ell}\partial_{r} v$
  and this implies
  \begin{equation*}
    \simeq
    \sum_{\ell=1}^{n}|\partial_{\ell}\partial_{r}^{N}v|
    \simeq
    |\partial_{r}^{N+1}v|
  \end{equation*}
  which proves \eqref{eq:pteq}.

  In order to prove \eqref{eq:dimch}, we note that the case of
  general $k$ follows by repeated application of the case
  $k=1$; moreover, if \eqref{eq:dimch} is true for some $s=s_{0}$,
  by complex interpolation with the trivial case $s=0$, it
  holds for all $0\le s\le s_{0}$. In conclusion, it is sufficient
  to prove \eqref{eq:dimch} when $k=1$ and $s=N$ is an integer
  which we can assume large, say $N>n$. In this case we have,
  using \eqref{eq:pteq},
  \begin{equation*}
    \textstyle
    \||x|v(|x|)\|^{2}_{\dot H^{N}(\mathbb{R}^{n})}
    \simeq
    \|\partial_{r}^{N}(rv(r))\|^{2}_{L^{2}(\mathbb{R}^{n})}
    \simeq
    \int_{0}^{+\infty}r^{n-1}
    |r\partial_{r}^{N}v+N\partial_{r}^{N-1}v|^{2}dr
  \end{equation*}
  and to prove the claim it remains to check the
  equivalence
  \begin{equation}\label{eq:concl}
    \textstyle
    \int_{0}^{+\infty}r^{n-1}
    |r\partial_{r}^{N}v+N\partial_{r}^{N-1}v|^{2}dr
    \simeq
    \int_{0}^{+\infty}r^{n+1}
    |\partial_{r}^{N}v|^{2}dr.
  \end{equation}
  One side of \eqref{eq:concl} follows by
  the Cauchy-Schwartz and then Hardy's inequality:
  \begin{equation*}
    \textstyle
    \int_{0}^{+\infty}r^{n+1}
    |r^{-1}\partial_{r}^{N-1}v|^{2}dr
    \simeq
    \||x|^{-1}\partial_{r}^{N-1}v\|_{L^{2}(\mathbb{R}^{n+2})}^{2}
    \lesssim
    \|\nabla\partial_{r}^{N-1}v\|_{L^{2}(\mathbb{R}^{n+2})}^{2}
    \simeq
    \|\partial_{r}^{N}v\|_{L^{2}(\mathbb{R}^{n+2})}^{2}.
  \end{equation*}
  To prove the converse inequality, we expand the square
  at the left hand side of \eqref{eq:concl}:
  \begin{equation*}
    \textstyle
    \int_{0}^{+\infty}r^{n+1}
    |\partial_{r}^{N}v|^{2}dr
    +
    \int_{0}^{+\infty}r^{n-1}
    [
    N^{2}|\partial_{r}^{N-1}v|^{2}+
    2Nr\Re(\partial_{r}^{N-1}v \overline{\partial_{r}^{N}v})
    ]dr,
  \end{equation*}
  then we integrate by parts the last term
  \begin{equation*}
    \textstyle
    2N \int_{0}^{+\infty}r^{n}
    \Re(\partial_{r}^{N-1}v \overline{\partial_{r}^{N}v})
    dr=
    N \int_{0}^{+\infty}r^{n}
    \partial_{r}|\partial_{r}^{N-1}v|^{2}dr=
    -Nn\int_{0}^{+\infty}r^{n-1}
    |\partial_{r}^{N-1}v|^{2}dr
  \end{equation*}
  and in conclusion we obtain that the left hand side
  of \eqref{eq:concl} is equal to
  \begin{equation*}
    \textstyle
    \int_{0}^{+\infty}r^{n+1}
    |\partial_{r}^{N}v|^{2}dr
    +
    N(N-n)
    \int_{0}^{+\infty}r^{n-1}
    |\partial_{r}^{N-1}v|^{2}dr
    \ge
    \int_{0}^{+\infty}r^{n+1}
    |\partial_{r}^{N}v|^{2}dr
  \end{equation*}
  since $N>n$, which proves the claim.
\end{proof}

We finally consider a different type of equivalence, which is
related to the change of variable
\begin{equation}\label{eq:chov}
  \phi(r)=w(r)\cdot \psi(r),
  \qquad
  w(r):=\frac{r^{\frac{m-1}{2}}}{h(r)^{\frac{n-1}{2}}}=
        w_{0}(r)\cdot r^{\frac{m-n}{2}},
  \qquad
  w_{0}:=\left(\frac r h\right)^{\frac{n-1}{2}}.
\end{equation}
Note that $w_{0}(0)=1$.
The Laplace-Beltrami operator $\Delta_{M}$ on $M^{n}$,
with metric $dr^{2}+h(r)^{2}d \omega^{2}_{\mathbb{S}^{n-1}}$,
and the flat Laplacians $\Delta_{n}$ on $\mathbb{R}^{n}$
and $\Delta_{m}$ on $\mathbb{R}^{m}$
act on radial functions respectively as
\begin{equation}\label{eq:lapls}
  \Delta_{M}\phi=\phi''+(n-1)\frac{h'}{h}\phi',
  \qquad
  \Delta_{n} \psi=\psi''+\frac{n-1}{r}\psi',
  \qquad
  \Delta_{m} \psi=\psi''+\frac{m-1}{r}\psi'.
\end{equation}
The operators $\Delta_{M}$
and $\Delta_{n}$ are connected by the formula
\begin{equation}\label{eq:potV0}
  \textstyle
  w_{0}^{-1}\Delta_{M}w_{0}=
  \Delta_{n}-V_{0}(r),
  \qquad
  V_{0}:=
  \frac{n-1}{2}
  \left[
    \frac{h''}{h}
    +\frac{n-3}{2}\left(
      \frac{h'^{2}}{h^{2}}-\frac{1}{r^{2}}
    \right)
  \right].
\end{equation}


\begin{lemma}[]\label{lem:equivLBnorm2}
  Let $m>n\ge3$, $s\ge0$,
  $M^{n}$ a smooth rotationally symmetric
  manifold with metric
  $dr^{2}+h(r)^{2}d \omega^{2}_{\mathbb{S}^{n-1}}$, and let
  $w(r)=r^{\frac{m-1}{2}}h(r)^{\frac{1-n}{2}}$ and $V_{0}(r)$
  as in \eqref{eq:potV0}.
  Assume that $V_{0}(|x|)$ has
  bounded derivatives on $\mathbb{R}^{m}$ up to the order
  $2\lceil s\rceil-2$, that
  $\partial^{\alpha}V_{0}=O(|x|^{-1})$ as
  $|x|\to \infty$ for $|\alpha|\le[s]$, and that
  $V_{0}$ satisfies condition \eqref{eq:assc}.
  Then for all smooth functions $\phi(r)$ on $M^{n}$
  which depend only on the radial coordinate,
  we have the equivalence
  \begin{equation}\label{eq:equivLB2}
    \|\phi\|_{H^{s}(M^{n})}
    \simeq
    \|w(|y|)^{-1}\phi(|y|)\|_{H^{s}(\mathbb{R}^{m}_{y})}
  \end{equation}
  Moreover, using the notations
  $|D_{M}|=(-\Delta_{M})^{\frac12}$,
  $\bra{D_{M}}=(1-\Delta_{M})^{\frac12}$,
  we have
  \begin{equation}\label{eq:equivLB2bis}
    \||D_{M}|^{\pm \frac12}\bra{D_{M}}^{s}\phi\|_{L^{2}(M^{n})}
    \simeq
    \||D|^{\pm\frac12}\bra{D}^{s}[w(|y|)^{-1}\phi(|y|)]\|
         _{L^{2}(\mathbb{R}^{m}_{y})}
  \end{equation}
\end{lemma}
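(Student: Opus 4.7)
The proof chains three equivalences, exploiting the factorization $w(r)=w_{0}(r)\cdot r^{k}$ with $k=(m-n)/2$ an integer, and accordingly breaking the change of variables into two stages: $\phi=w_{0}f$, $f=r^{k}\psi$. The plan is: (a) intertwine $-\Delta_{M}$ with a Schr\"odinger operator on $\mathbb{R}^{n}$ via the weight $w_{0}$; (b) remove the resulting potential via Lemmas \ref{sob1} and \ref{sob2}; (c) shift from $\mathbb{R}^{n}$ to $\mathbb{R}^{m}$ via Lemma \ref{lem:dimchange}.

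For step (a), a direct Jacobian computation gives $|\phi|^{2}h^{n-1}=|f|^{2}r^{n-1}$, so $f\mapsto w_{0}f$ is, up to a dimensional constant, a unitary equivalence between radial $L^{2}(\mathbb{R}^{n})$ and radial $L^{2}(M^{n})$; by identity \eqref{eq:potV0} this conjugates $-\Delta_{M}$ into $-\Delta_{n}+V_{0}$ on radial functions. The spectral theorem then yields
\[\|F(-\Delta_{M})\phi\|_{L^{2}(M^{n})}\simeq\|F(-\Delta_{n}+V_{0})f\|_{L^{2}(\mathbb{R}^{n})}\]
for every Borel function $F$. For \eqref{eq:equivLB2} in step (b) I take $F(\lambda)=(1+\lambda)^{s/2}$, pass to a large constant $K$ via Lemma \ref{lem:irrelevantM} and \eqref{eq:MH}, apply Lemma \ref{sob1} to remove $V_{0}$, and return to $K=1$; the assumption of $2\lceil s\rceil-2$ bounded derivatives on $V_{0}$ is exactly what Lemma \ref{sob1} needs. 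For \eqref{eq:equivLB2bis} I instead take $F(\lambda)=\lambda^{\pm 1/4}(1+\lambda)^{s/2}$ and invoke Lemma \ref{sob2}, whose hypotheses \eqref{eq:assc} and \eqref{eq:hardyassc} are precisely those imposed on $V_{0}$ here.

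After (a) and (b) the two claims are reduced, on radial $\psi$, to
\[\|(1-\Delta_{n})^{s/2}(r^{k}\psi)\|_{L^{2}(\mathbb{R}^{n})}\simeq\|\psi\|_{H^{s}(\mathbb{R}^{m})}\]
and its $|D|^{\pm 1/2}\bra{D}^{s}$ counterpart. For (c) I split $\|(1-\Delta)^{s/2}g\|_{L^{2}}^{2}\simeq\|g\|_{L^{2}}^{2}+\|g\|_{\dot H^{s}}^{2}$ and $\||D|^{\pm 1/2}\bra{D}^{s}g\|_{L^{2}}^{2}\simeq\|g\|_{\dot H^{\pm 1/2}}^{2}+\|g\|_{\dot H^{s\pm 1/2}}^{2}$; each homogeneous piece of $r^{k}\psi$ on $\mathbb{R}^{n}$ transfers to $\psi$ on $\mathbb{R}^{m}$ via Lemma \ref{lem:dimchange}, while the $L^{2}$ contributions match by the Jacobian. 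The main subtlety, and the only real obstacle, is the negative exponent $\sigma=-1/2$ (and $\sigma=s-1/2$ when $s<1/2$) appearing in \eqref{eq:equivLB2bis}, which lies outside the range $\sigma\ge 0$ covered by Lemma \ref{lem:dimchange}. I handle it by duality on radial functions: any radial test $\varphi\in\dot H^{1/2}(\mathbb{R}^{n})$ can be written $\varphi=r^{k}\tilde\varphi$, the pairing $\langle r^{k}u,\varphi\rangle_{L^{2}(\mathbb{R}^{n})}$ equals (up to a dimensional constant) $\langle u,\tilde\varphi\rangle_{L^{2}(\mathbb{R}^{m})}$, and $\|\varphi\|_{\dot H^{1/2}(\mathbb{R}^{n})}\simeq\|\tilde\varphi\|_{\dot H^{1/2}(\mathbb{R}^{m})}$ by Lemma \ref{lem:dimchange}, which gives the $\dot H^{-1/2}$ equivalence. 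Combining all the pieces yields \eqref{eq:equivLB2} and \eqref{eq:equivLB2bis}; apart from this negative-index extension, the remaining steps are a routine assembly of the preceding lemmas.
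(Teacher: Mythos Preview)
Your proof is correct and follows the same three-stage architecture as the paper: conjugate $-\Delta_{M}$ to $-\Delta_{n}+V_{0}$ via the weight $w_{0}$, strip the potential using Lemmas \ref{sob1}--\ref{sob2} (after adjusting the constant via Lemma \ref{lem:irrelevantM} and \eqref{eq:MH}), then shift dimension $n\to m$ via Lemma \ref{lem:dimchange}. The only substantive divergence is your treatment of the negative exponent $\sigma=-\tfrac12$ in the dimension-change step. The paper instead exploits the algebraic identity $|x|^{-k}(1-\Delta_{n})|x|^{k}=1-\Delta_{m}-\ell|x|^{-2}$ with $\ell=k(k+n-2)$, which reduces the question to the equivalence $\|(1-\Delta_{m}-\ell|x|^{-2})^{\sigma/2}w\|_{L^{2}}\simeq\|(1-\Delta_{m})^{\sigma/2}w\|_{L^{2}}$, and this is exactly Lemma \ref{lem:sobhom} (valid on the whole range $|\sigma|\le 1$). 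Your duality argument is equally valid---the radial sector is closed under $|D|^{s}$, so the $\dot H^{-1/2}$--$\dot H^{1/2}$ pairing restricts to radial functions, and the Jacobian identity $\langle r^{k}u,r^{k}\tilde\varphi\rangle_{L^{2}(\mathbb{R}^{n})}=\tfrac{c_{n}}{c_{m}}\langle u,\tilde\varphi\rangle_{L^{2}(\mathbb{R}^{m})}$ does the rest---and is arguably more transparent, though it yields only the single exponent $-\tfrac12$ rather than the full range $-1\le\sigma\le 0$ that the paper's conjugation trick delivers.
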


\begin{proof}
  From the first formula in \eqref{eq:potV0} we obtain,
  for all $s,K\ge0$,
  \begin{equation*}
    w_{0}^{-1}(K-\Delta_{M})^{s}w_{0}=
    (K-\Delta_{n}+V_{0}(r))^{s}.
  \end{equation*}
  Since for radial functions and $p<\infty$
  \begin{equation*}
    \textstyle
    \|\psi\|_{L^{p}(\mathbb{R}^{n})}^{p}=c_{n}
    \int_{0}^{\infty}|\psi(r)|^{p}r^{n-1}dr,
    \qquad
    \|\phi\|_{L^{p}(M)}^{p}=c_{n}
    \int_{0}^{\infty}|\phi(r)|^{p}h^{n-1}dr,
  \end{equation*}
  we get the identity
  \begin{equation*}
    \textstyle
    \|w^{\frac2p-1}(K-\Delta_{M})^{s}\phi\|_{L^{p}(M)}
    =
    \|(r^{k})^{\frac2p-1}(K-\Delta_{n}+V_{0})^{s}w_{0}^{-1}\phi\|
      _{L^{p}(\mathbb{R}^{n})},
    \qquad
    k:=\frac{m-n}{2}.
  \end{equation*}
  By repeating the proof of Lemma \ref{sob1} with the weight
  $(r^{k})^{\frac2p-1}$, we have
  \begin{equation*}
    \|(r^{k})^{\frac2p-1}(K-\Delta_{n}+V_{0})^{s}w_{0}^{-1}\phi\|
      _{L^{p}(\mathbb{R}^{n})}
    \simeq
    \|(r^{k})^{\frac2p-1}(K-\Delta_{n})^{s}w_{0}^{-1}\phi\|
      _{L^{p}(\mathbb{R}^{n})}
  \end{equation*}
  provided $K$ is large enough.
  Thus for $p=2$ we have proved that
  \begin{equation*}
    \textstyle
    \|(K-\Delta_{M})^{s}\phi\|_{L^{2}(M)}
    \simeq
    \|(K-\Delta_{n})^{s}w_{0}^{-1}\phi\|
      _{L^{2}(\mathbb{R}^{n})}
  \end{equation*}
  provided $K$ is large enough. However, the two norms
  are equivalent to the $H^{2s}$ norms on $M^{n}$ and on
  $\mathbb{R}^{n}$ respectively (on $M^{n}$ this follows
  easily from the spectral formula),
  thus we have proved that
  \begin{equation*}
    \|\phi\|_{H^{s}(M^{n})}\simeq
    \|w_{0}^{-1}\phi\|_{H^{s}(\mathbb{R}^{n})}
  \end{equation*}
  for all $s\ge0$ and all radial functions.
  In order to obtain \eqref{eq:equivLB2}
  it is sufficient to prove the equivalence
  \begin{equation*}
    \||x|^{k}v\|
      _{H^{s}(\mathbb{R}^{n})}
    \simeq
    \|v\|
      _{H^{s}(\mathbb{R}^{m})}
  \end{equation*}
  for all radial functions $v$, and this follows immediately
  from Lemma \ref{lem:dimchange}.

  Proceeding in a similar way, and
  using \eqref{eq1-lem2}, \eqref{eq1b-lem2}
  (with $c=V_{0}$), we obtain
  \begin{equation*}
    \||D_{M}|^{\pm \frac12}\bra{D_{M}}^{s}\phi\|_{L^{2}(M^{n})}
    \simeq
    \||D|^{\pm\frac12}\bra{D}^{s}(w_{0}^{-1}\phi)\|
        _{L^{2}(\mathbb{R}^{n})}
  \end{equation*}
  which is equivalent to
  \begin{equation*}
    \simeq
    \||D|^{\pm\frac12}(w_{0}^{-1}\phi)\|_{L^{2}(\mathbb{R}^{n})}
    +
    \||D|^{s\pm\frac12}(w_{0}^{-1}\phi)\|_{L^{2}(\mathbb{R}^{n})}.
  \end{equation*}
  In order to conclude the proof of \eqref{eq:equivLB2bis},
  it is now sufficient to apply to each term the equivalence
  for radial functions
  \begin{equation}\label{eq:chdimb}
    \||D|^{\sigma}v\|_{L^{2}(\mathbb{R}^{n})}
    \simeq
    \||D|^{\sigma}(r^{-k}v)\|_{L^{2}(\mathbb{R}^{m})}
  \end{equation}
  for $\sigma=s\pm \frac12$ and $\sigma=\pm \frac12$.
  When $\sigma\ge0$, this follows from
  Lemma \ref{lem:dimchange}. However, \eqref{eq:chdimb}
  is valid also when $0\ge\sigma\ge-1$; to check this fact
  we write \eqref{eq:chdimb} in the equivalent form
  \begin{equation*}
    \||x|^{-k}(1-\Delta_{n})^{\frac \sigma2}|x|^{k}w\|
        _{L^{2}(\mathbb{R}^{m})}
    \simeq
    \|(1-\Delta_{m})^{\frac \sigma2}w\|
        _{L^{2}(\mathbb{R}^{m})};
  \end{equation*}
  since
  $|x|^{-k}(1-\Delta_{n})|x|^{k}=
    (1-\Delta_{m}-\frac\ell{|x|^{2}})$
  where $\ell=k(k+n-2)$, we see that
  \eqref{eq:chdimb} is also equivalent to
  \begin{equation*}
    \textstyle
    \|(1-\Delta_{m}-\frac\ell{|x|^{2}})^{\frac \sigma2}w\|
        _{L^{2}(\mathbb{R}^{m})}
    \simeq
    \|(1-\Delta_{m})^{\frac \sigma2}w\|
        _{L^{2}(\mathbb{R}^{m})}
  \end{equation*}
  and this follows from Lemma \ref{lem:sobhom}.
\end{proof}

\section{Strichartz estimates for the perturbed equation}
\label{sec:strichartz}

We shall need a weighted version of Hardy's inequality:

\begin{proposition}\label{pro:hardy}
  Let $n\ge2$.
  Let $\alpha\in C^{1}(0,+\infty)$ be such that $\alpha>0$ and
  the integral $\beta(r):=\alpha(r)r^{n-1}
  \int_{0}^{r}\frac{ds}{\alpha(s)s^{n-1}}$ is finite
  for all $r>0$. Then the inequality
  \begin{equation}\label{eq:hardy}
    \int_{\mathbb{R}^{n}}
    \frac{|u|^{2}}{\beta(r)^{2}}
    \alpha(r) dx
    \le
    4\int_{\mathbb{R}^{n}}
    |\widehat{x}\cdot \nabla u|^{2}
    \alpha(r)
    dx,\qquad
    r=|x|
  \end{equation}
  is valid for all $u\in H^{1}_{loc}(\mathbb{R}^{n}\setminus0)$
  such that
  $\liminf_{r\to0^{+}}\frac{\alpha(r)}
  {\beta(r)}\int_{|x|=r}|u|^{2}dS=0$.
\end{proposition}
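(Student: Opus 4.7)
\textbf{Proof plan for Proposition \ref{pro:hardy}.}
The plan is to reduce the inequality to a one–dimensional weighted Hardy inequality in the radial variable, and to prove the latter by identifying the multiplier that makes the weight $\alpha(r) r^{n-1}/\beta(r)^{2}$ an exact derivative, followed by a single integration by parts and Cauchy–Schwarz.

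First I would pass to polar coordinates $x=r\omega$, $r=|x|$, $\omega\in\mathbb{S}^{n-1}$; setting $w(r)=\alpha(r)r^{n-1}$ and $\widehat{x}\cdot\nabla u=\partial_{r}u$, both sides of \eqref{eq:hardy} split as an angular integral of one–dimensional integrals with respect to $w(r)\,dr$, so it is enough to prove, for each $\omega$,
\begin{equation*}
  \int_{0}^{\infty} \frac{|u(r,\omega)|^{2}}{\beta(r)^{2}}\, w(r)\,dr
  \;\le\; 4\int_{0}^{\infty} |\partial_{r}u(r,\omega)|^{2}\, w(r)\,dr,
\end{equation*}
treating $\omega$ as a parameter. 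Then I would introduce
\begin{equation*}
  \gamma(r):=\int_{0}^{r}\frac{ds}{\alpha(s)s^{n-1}}=\int_{0}^{r}\frac{ds}{w(s)},
  \qquad\text{so that}\qquad \beta(r)=w(r)\gamma(r),\quad \gamma'(r)=\frac{1}{w(r)}.
\end{equation*}

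The key algebraic observation is the identity
\begin{equation*}
  \frac{w(r)}{\beta(r)^{2}}=\frac{1}{w(r)\gamma(r)^{2}}=-\Bigl(\frac{1}{\gamma(r)}\Bigr)',
\end{equation*}
which turns the left–hand side into an exact derivative. Integrating by parts on an interval $[\varepsilon,R]$ (to keep the boundary terms explicit) and writing $u=u(r,\omega)$,
\begin{equation*}
  \int_{\varepsilon}^{R}\frac{u^{2}}{\beta^{2}}\,w\,dr
  \;=\;\Bigl[-\frac{u^{2}}{\gamma}\Bigr]_{\varepsilon}^{R}
      +2\int_{\varepsilon}^{R}\frac{u\,\partial_{r}u}{\gamma}\,dr.
\end{equation*}
After integrating over $\mathbb{S}^{n-1}$, the boundary term at $r=\varepsilon$ equals $\frac{\alpha(\varepsilon)}{\beta(\varepsilon)}\int_{|x|=\varepsilon}|u|^{2}\,dS$, which by hypothesis tends to $0$ along a sequence $\varepsilon_{k}\to 0^{+}$. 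The boundary at $r=R$ contributes with the favorable sign $-u^{2}(R,\omega)/\gamma(R)\le 0$; letting $R\to\infty$ (and choosing a sequence along which the term either vanishes or stays nonpositive, with a density argument if $u$ is only in $H^{1}_{\mathrm{loc}}$) one may drop it.

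It then remains to estimate the cross term by Cauchy–Schwarz, splitting the weight $1/\gamma$ symmetrically as $\beta^{-1}\cdot w$:
\begin{equation*}
  2\Bigl|\int_{0}^{\infty}\frac{u\,\partial_{r}u}{\gamma}\,dr\Bigr|
  \;=\;2\Bigl|\int_{0}^{\infty}\frac{u}{\beta}\,w^{1/2}\cdot \partial_{r}u\, w^{1/2}\,dr\Bigr|
  \;\le\;2\Bigl(\int_{0}^{\infty}\frac{u^{2}}{\beta^{2}}\,w\,dr\Bigr)^{\!1/2}\!\Bigl(\int_{0}^{\infty}|\partial_{r}u|^{2}w\,dr\Bigr)^{\!1/2}.
\end{equation*}
Denoting the left–hand side of \eqref{eq:hardy} by $A$ and the right–hand side (without the factor $4$) by $B$, we obtain $A\le 2\sqrt{AB}$, hence $A\le 4B$, which is exactly the claim. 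The main obstacle is really only the careful handling of the boundary term at $r=0$: one has to invoke the $\liminf$ condition on a suitable sequence, and check that $\alpha/\beta=1/(r^{n-1}\gamma)$ so that the stated trace condition matches precisely the limit that must be killed. Once this bookkeeping is done the argument is a one–line Cauchy–Schwarz.
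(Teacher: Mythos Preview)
Your proof is correct and is essentially the same argument as the paper's, just presented in polar coordinates rather than via the divergence theorem on $\mathbb{R}^{n}$: your identity $w/\beta^{2}=-(1/\gamma)'$ is the one--dimensional form of the paper's identity $\nabla\cdot(\widehat{x}\,\tfrac{\alpha}{\beta}|u|^{2})=-\tfrac{\alpha}{\beta^{2}}|u|^{2}+\tfrac{2\alpha}{\beta}\Re(u'\bar u)$, and both proofs finish by dropping the nonpositive boundary term at $R$, using the $\liminf$ hypothesis at $0$, and applying Cauchy--Schwarz. Two small points of bookkeeping: your sentence ``it is enough to prove, for each $\omega$'' is not literally true since the boundary hypothesis is only on the spherical average, but your actual argument (integrate by parts in $r$, then integrate over $\mathbb{S}^{n-1}$ before invoking the hypothesis) is the correct one; and to pass from $A\le 2\sqrt{AB}$ to $A\le 4B$ you should either apply Young's inequality \emph{before} sending $\varepsilon\to0$, $R\to\infty$ (as the paper does, obtaining $A_{\varepsilon,R}\le 2\,\mathrm{bdry}(\varepsilon)+4B$ directly), or else note that $A_{\varepsilon,R}<\infty$ at each stage so the implication is valid on truncations.
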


\begin{proof}
  By definition of $\beta$ we have for the radial
  derivative $\beta'=\widehat{x} \cdot \nabla \beta(|x|)$
  \begin{equation*}
    \beta'=\frac{n-1}{r}\beta+
    \frac{\alpha'}{\alpha}\beta
    +1
  \end{equation*}
  which implies the identity
  \begin{equation*}
    \nabla \cdot
    \left(
      \widehat{x}\frac{\alpha}{\beta}|u|^{2}
    \right)
    =
    -\frac{\alpha}{\beta^{2}}|u|^{2}
    +\frac{\alpha}{\beta}2\Re(u'\overline{u}).
  \end{equation*}
  Integrate over the difference of two balls
  $\Omega= B(0,R)\setminus B(0,r)$, $r<R$, to get
  \begin{equation*}
    \int_{\Omega}\frac{\alpha}{\beta^{2}}|u|^{2}dx
    =
    2\Re\int_{\Omega}\frac{\alpha}{\beta}u'\overline{u}dx
    +\int_{|x|=r}\frac{\alpha}{\beta}|u|^{2}dS
    -\int_{|x|=R}\frac{\alpha}{\beta}|u|^{2}dS.
  \end{equation*}
  Drop the last (negative) term and use Cauchy-Schwartz
  to obtain
  \begin{equation*}
    \int_{\Omega}\frac{\alpha}{\beta^{2}}|u|^{2}dx
    \le
    2\int_{|x|=r}\frac{\alpha}{\beta}|u|^{2}dS
    +
    4\int_{\Omega}\alpha|u'|^{2}dx
  \end{equation*}
  which implies \eqref{eq:hardy}, letting $r\to0^{+}$
  (on a suitable sequence) and $R\to+\infty$.
\end{proof}

\begin{corollary}\label{cor:hardy2}
  Let $\zeta\in C^{2}([0,\infty))$ with $\zeta\ge0$,
  $\zeta'>0$, $\zeta''\le0$, let $\epsilon>0$ and $n\ge2$.
  Then the inequality
  \begin{equation}\label{eq:hardy2}
    \int_{\mathbb{R}^{n}}
    [\zeta'+2 \epsilon \zeta]
    e^{-2 \epsilon r}r^{-(n-1)}\frac{|u|^{2}}{|x|^{2}}dx
    \le
    4\int_{\mathbb{R}^{n}}
    [\zeta'+2 \epsilon \zeta]
    e^{-2 \epsilon r}r^{-(n-1)}|\widehat{x}
    \cdot \nabla u|^{2}dx
  \end{equation}
  holds for any $u\in H^{1}_{loc}(\mathbb{R}^{n}\setminus0)$
  such that
  $\liminf_{r\to0^{+}}
    r^{-n}\int_{|x|=r}|u|^{2}dS=0$.
\end{corollary}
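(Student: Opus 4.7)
The plan is to deduce \eqref{eq:hardy2} directly from Proposition \ref{pro:hardy}, applied with the specific radial weight
\[
  \alpha(r) := [\zeta'(r)+2\epsilon\zeta(r)]\, e^{-2\epsilon r}\, r^{-(n-1)},
\]
which is positive on $(0,\infty)$ since $\zeta\ge 0$, $\zeta'>0$, $\epsilon>0$. With this choice, the right-hand sides of \eqref{eq:hardy} and \eqref{eq:hardy2} coincide exactly, while the left-hand sides differ only in that $1/|x|^{2}$ replaces $1/\beta(r)^{2}$. Hence the corollary will follow provided I show that $\beta(r)\le r$ for all $r>0$ and verify that the boundary condition at the origin required by Proposition \ref{pro:hardy} is implied by the hypothesis $\liminf_{r\to 0^{+}}r^{-n}\!\int_{|x|=r}|u|^{2}\,dS=0$.

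To handle the key inequality $\beta(r)\le r$, I would simplify by introducing the auxiliary function $H(r):=\zeta(r)e^{2\epsilon r}$. Then $H'(r)=[\zeta'(r)+2\epsilon\zeta(r)]e^{2\epsilon r}$ so that $\alpha(r)r^{n-1}=H'(r)e^{-4\epsilon r}$, and the defining formula for $\beta$ becomes
\[
  \beta(r) \;=\; H'(r)\,e^{-4\epsilon r}\int_{0}^{r}\frac{e^{4\epsilon s}}{H'(s)}\,ds.
\]
The inequality $\beta(r)\le r$ is equivalent to the nonnegativity of $K(r):=\frac{r\,e^{4\epsilon r}}{H'(r)}-\int_{0}^{r}\frac{e^{4\epsilon s}}{H'(s)}\,ds$. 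Since $K(0)=0$, it suffices to check that $K'\ge 0$. A direct differentiation and telescoping of the boundary term give
\[
  K'(r) \;=\; \frac{r\,e^{4\epsilon r}}{H'(r)^{2}}\bigl[\,4\epsilon H'(r) - H''(r)\,\bigr],
\]
and a short computation yields $4\epsilon H' - H'' = (4\epsilon^{2}\zeta - \zeta'')\,e^{2\epsilon r}$, which is nonnegative precisely because of the hypotheses $\zeta\ge 0$ and $\zeta''\le 0$ (this is the sole place where these two assumptions enter).

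It then remains to translate the boundary assumption. Near the origin $\alpha(r)\sim C r^{-(n-1)}$ with $C=\zeta'(0)+2\epsilon\zeta(0)$, and the integral expression for $\beta$ gives $\beta(r)\sim r$, so $\alpha(r)/\beta(r)\sim C r^{-n}$; thus $\liminf_{r\to 0^{+}}r^{-n}\!\int_{|x|=r}|u|^{2}\,dS=0$ implies the limit condition required in Proposition \ref{pro:hardy}. With $\beta(r)\le r$ established we have $1/\beta(r)^{2}\ge 1/r^{2}$, and \eqref{eq:hardy2} follows from \eqref{eq:hardy}. The only nontrivial obstacle in this plan is the monotonicity calculation for $K'$, and the algebraic identity $4\epsilon H'-H''=(4\epsilon^{2}\zeta-\zeta'')e^{2\epsilon r}$ makes that step immediate under the stated sign conditions on $\zeta$.
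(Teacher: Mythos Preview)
Your proof is correct and follows the same strategy as the paper: choose the same weight $\alpha$, show $\beta(r)\le r$, and verify the boundary condition at the origin via the asymptotics $\alpha/\beta\sim Cr^{-n}$. The only cosmetic difference is in how you establish $\beta\le r$: the paper observes directly that $(\alpha r^{n-1})'=[\zeta''-4\epsilon^{2}\zeta]e^{-2\epsilon r}\le 0$, so $\alpha r^{n-1}$ is nonincreasing and hence $\beta(r)=\alpha(r)r^{n-1}\int_{0}^{r}\frac{ds}{\alpha(s)s^{n-1}}\le r$ immediately; your computation of $K'$ is an unwinding of the same fact (indeed $4\epsilon H'-H''\ge 0$ is exactly $(\alpha r^{n-1})'\le 0$).
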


\begin{proof}
  Choose
  $\alpha=[\zeta'+2 \epsilon \zeta]e^{-2 \epsilon r}r^{1-n}$
  and apply Proposition \eqref{pro:hardy}. Notice that
  $(\alpha r^{n-1})'=
  [\zeta''-4 \epsilon^{2}\zeta]e^{-2 \epsilon r}\le0$
  so that $\alpha r^{n-1}$ is nonincreasing, and this
  implies $\beta\le r$.
  Thus we obtain inequality \eqref{eq:hardy2}, provided we can
  verify the condition $\alpha/\beta\int_{|x|=r}|u|^{2}\to 0$.
  By the assumptions on $\zeta$ we get
  $\zeta'(r)+2 \epsilon \zeta
  \le(1+2 \epsilon r)\zeta'(0)+\zeta(0)$; notice that
  $\zeta'(0)$ must be strictly positive. As a consequence,
  \begin{equation*}
    \frac{\alpha}{\beta}
    =
    \left(
    r^{n-1}\int_{0}^{r}
    \frac{e^{2 \epsilon s}ds}{\zeta'+2 \epsilon \zeta}
    \right)^{-1}
    \le
    C_{\epsilon}
    \left(
    r^{n-1}\int_{0}^{r}
    \frac{ds}{1+2 \epsilon s}
    \right)^{-1}
    \le
    C'_{\epsilon}r^{-n}
  \end{equation*}
  which concludes the proof.
\end{proof}

Consider now the equation on $\mathbb{R}^{n}$
\begin{equation}\label{eq:eqbase}
  u''+a(r)u'+\kappa^{2}u-c(r)u=f,\qquad
  r=|x|
\end{equation}
where $u(r)$ is radial and we write as usual
$u'=\widehat{x}\cdot \nabla u(|x|)$ for the radial derivative.
The function $a(r)$ will be smooth for $r>0$ but
singular at $r=0$, the model case being $a=(n-1)/r$.
Our next goal is to prove a suitable smoothing estimate
for solutions of \eqref{eq:eqbase}. In the following
we use the notation $L^{2}(\gamma(|x|) dx)$ to denote
the weighted $L^{2}$ space with norm
\begin{equation*}
  \|u\|_{L^{2}(\gamma(|x|) dx)}
  =\left(\int_{\mathbb{R}^{n}}|u(x)|^{2}\gamma(|x|)dx
  \right)^{\frac12}.
\end{equation*}

\begin{theorem}\label{the:smoo}
  Let $n\ge3$ and $\kappa\in \mathbb{C}$ with
  $\Im\kappa>0$.
  Let $a(r)\in C^{2}(0,+\infty)$,
  $c(r)\in C^{1}(0,+\infty)$ and denote with $A(r)$ a
  function such that $A'=a$ while $\gamma(r)= e^{A(r)}r^{1-n}$.

  Assume that $a(r)$ is bounded for large $r$,
  that $\lim_{r\to0^{+}}\gamma(r)>0$ exists,
  and that for some $0<\delta_{0}<1$ and some $C>0$
  the function
  \begin{equation*}
    Q(r)=\left(\frac{a'}{2}+\frac{a^{2}}{4}+c(r)\right)r
    +\frac{1-\delta_{0}}{4r}
  \end{equation*}
  satisfies the conditions
  \begin{equation*}
    0\le rQ(r)\le C,\qquad
    Q'(r)\le0.
  \end{equation*}

  Then any solution $u\in H^{2}_{loc}(\mathbb{R}^{n})$
  of equation \eqref{eq:eqbase} such that
  $u,u'\in L^{2}(\gamma(|x|)dx)$ satisfies the estimate
  \begin{equation}\label{eq:smooe}
    \||x|^{-1}u\|_{L^{2}(\gamma(|x|) dx)}
    \le
    4 \delta_{0}^{-1}
    \||x|f\|_{L^{2}(\gamma(|x|) dx)},
    \qquad
    \gamma(r)=r^{1-n}e^{A(r)}.
  \end{equation}
\end{theorem}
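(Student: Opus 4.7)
The strategy is a one-dimensional Morawetz identity applied after conjugating away the first-order term. Set $v = e^{A/2}u$ and $g = e^{A/2}f$. A direct computation gives $v'' - (a'/2 + a^2/4)v = e^{A/2}(u'' + au')$, so the equation becomes
\begin{equation*}
  v'' + \kappa^2 v - V(r)v = g,\qquad V := c + \tfrac{a'}{2}+\tfrac{a^2}{4},
\end{equation*}
and the hypothesis on $Q$ translates to $Q(r) = rV(r) + (1-\delta_0)/(4r) \ge 0$ with $Q'\le 0$. Since $\gamma(r)r^{n-1}= e^{A(r)}$ for radial functions, the desired inequality \eqref{eq:smooe} is equivalent (up to the sphere volume constant) to
\begin{equation*}
  \int_0^\infty r^{-2}|v|^2\,dr \le 16\delta_0^{-2}\int_0^\infty r^{2}|g|^2\,dr.
\end{equation*}

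The main step is to pair the reduced equation with the Morawetz multiplier $\overline{Mv} = r\overline{v'} + \overline{v}/2$, integrate over $(0,\infty)$, and take real parts. Integration by parts converts the $v''$ contribution into $2\int |v'|^2\,dr$, and the $V$-contribution into $-\int rV'|v|^2\,dr$. Using the identity $-rV' = Q/r - Q' - (1-\delta_0)/(2r^2)$, the left-hand side takes the form
\begin{equation*}
  2\int |v'|^2\,dr + \int\Bigl(\frac{Q}{r}-Q'\Bigr)|v|^2\,dr - \frac{1-\delta_0}{2}\int\frac{|v|^2}{r^2}\,dr
  + (\kappa^{2}\text{-cross terms}) + (\text{boundary}).
\end{equation*}
The first two integrals are non-negative by $Q\ge 0$ and $Q'\le 0$, and can be dropped; the sharp half-line Hardy inequality $\int|v'|^2\,dr \ge \tfrac14\int|v|^2/r^2\,dr$ then shows that the remaining expression controls $(\delta_0/2)\int|v|^2/r^2\,dr$ from below.

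For the right-hand side, Cauchy--Schwarz gives
\begin{equation*}
  \Bigl|\Re\int g\,\overline{Mv}\,dr\Bigr|\le \|rg\|_{L^2}\bigl(\|v'\|_{L^2}+\tfrac12\|v/r\|_{L^2}\bigr),
\end{equation*}
and both factors on the right are in turn dominated by the Hardy-controlled LHS. A standard quadratic absorption argument then yields the bound with the explicit constant $4\delta_0^{-1}$, tracking the exact balance between the coefficient $(1-\delta_0)/4$ inside $Q$ and the sharp Hardy constant $1/4$.

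The principal technical obstacle is twofold. First, the cross-terms involving $\kappa^{2}$, which carry factors $\Im\kappa^{2}\cdot \Im(v\bar v')$ with no definite sign, must be controlled by Cauchy--Schwarz and absorbed into the good terms; the hypothesis $\Im\kappa>0$ is what ensures enough $L^{2}$-decay of $v$ at infinity (given $u,u'\in L^2(\gamma\,dx)$) so that the boundary contributions at $+\infty$ vanish. Second, controlling the boundary term at $0^+$ requires the assumption $\lim_{r\to 0^+}\gamma(r)>0$ together with the $L^2$-integrability of $u,u'$, which forces $v,v'$ to satisfy the hypothesis needed by Hardy (cf.~Corollary~\ref{cor:hardy2}), so that the endpoint trace in the multiplier identity either vanishes or carries a favorable sign.
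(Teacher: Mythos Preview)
Your reduction $v=e^{A/2}u$ and the resulting one-dimensional equation $v''+\kappa^{2}v-Vv=g$ are correct, as is the translation of the hypothesis on $Q$. The gap is in the handling of the $\kappa^{2}$ term. With the multiplier $r\bar{v'}+\bar{v}/2$, the real part of $\kappa^{2}$ indeed cancels (up to boundary), but what survives is
\[
  -\Im(\kappa^{2})\int_{0}^{\infty} r\,\Im(v\bar{v'})\,dr
  \;=\;-2\,\Re\kappa\,\Im\kappa\int_{0}^{\infty} r\,\Im(v\bar{v'})\,dr,
\]
and this cannot be ``controlled by Cauchy--Schwarz and absorbed'': any bound of the form $|\int r\,\Im(v\bar{v'})|\le \|v'\|_{L^{2}}\|rv\|_{L^{2}}$ involves $\|rv\|_{L^{2}}$, which is not dominated by the good quantities $\|v'\|_{L^{2}}$ and $\|v/r\|_{L^{2}}$, and the prefactor $|\Im(\kappa^{2})|$ is not small. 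Since the estimate \eqref{eq:smooe} must hold with a constant independent of $\kappa$ (this uniformity is the whole point for the subsequent Kato-smoothing argument), the absorption fails. A related issue is that without any $\kappa$-dependent damping your boundary terms at $r=+\infty$ are not obviously finite: $|v|^{2}=e^{A}|u|^{2}$ and $a$ is only assumed bounded, so $e^{A}$ may grow.

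The paper fixes both problems by putting the oscillatory factor into the substitution, writing $u=e^{i\kappa r}e^{-A/2}v$, so that the reduced equation becomes $v''+2i\kappa v'-(\sigma+c)v=g$ with no zero-order $\kappa^{2}$ term. One then multiplies by $2\phi\,\bar{v'}$ with the $\kappa$-dependent weight $\phi(r)=e^{-2(\Im\kappa)r}r^{2-n}$; the first-order term $2i\kappa v'$ now contributes $+4(\Im\kappa)\phi|v'|^{2}$, which has the correct sign precisely because $\Im\kappa>0$, and the exponential weight also kills the boundary terms at infinity. After this, the weighted Hardy inequality of Corollary~\ref{cor:hardy2} (with $\zeta(r)=r$) plays the role you assigned to the sharp half-line Hardy inequality, and the rest of your outline goes through.
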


\begin{proof}
  Define new functions $v(r),g(r)$ via
  \begin{equation*}
    u(r)=e^{i \kappa r}e^{-A/2}v(r),\qquad
    f(r)=e^{i \kappa r}e^{-A/2}g(r)
  \end{equation*}
  and notice that $v(r)$ satisfies the equation
  \begin{equation*}
    v''+2 i \kappa v'-(\sigma(r)+c(r))v=g,
    \qquad
    \sigma(r):=\frac{a'}{2}+\frac{a^{2}}{4}.
  \end{equation*}
  Multiply the equation by $2\phi(r)\overline{v'}$,
  $\phi$ a weight to be chosen,
  and take the real part; using the identities
  \begin{equation*}
    \Re(2 \phi v'' \overline{v'})=
    (\phi|v'|^{2})'-\phi'|v'|^{2},
    \qquad
    \Re(4i\phi \kappa v' \overline{v'})=
    -4(\Im \kappa)\phi|v'|^{2}
  \end{equation*}
  and
  \begin{equation*}
    \Re (-2(\sigma+c )v \phi \overline{v'})=
    (-(\sigma+c)\phi|v|^{2})'+
    ((\sigma+c)\phi)'|v|^{2}
  \end{equation*}
  we obtain
  \begin{equation*}
    P'+
    [\phi'+4(\Im \kappa)\phi]|v'|^{2}-((\sigma+c)\phi)'|v|^{2}
    =\Re(-2 g \phi \overline{v'}),
  \end{equation*}
  where
  \begin{equation*}
    P=(\sigma+c)\phi|v|^{2}-\phi|v'|^{2}.
  \end{equation*}
  Notice that
  \begin{equation*}
    P'=\widehat{x}\cdot \nabla P=
    \nabla \cdot\left\{\widehat{x}P\right\}-
    \frac{n-1}{r}P,
  \end{equation*}
  thus we arrive at the identity
  \begin{equation}\label{eq:idbase}
    \textstyle
    \nabla \cdot\left\{\widehat{x}P\right\}
    +
    \left[\phi'+\frac{n-1}{r}\phi+ 4(\Im \kappa)\phi\right]
      \cdot |v'|^{2}
    -
    \left[((\sigma+c)\phi)'+\frac{n-1}{r}(\sigma+c)\phi\right]
      \cdot |v|^{2}
    =
    \Re(-2 g \phi \overline{v'}).
  \end{equation}
  We now choose
  \begin{equation*}
    \phi(r)=e^{-2(\Im \kappa)r}r^{-(n-2)}
  \end{equation*}
  which reduces the identity to
  \begin{equation}\label{eq:idbase2}
    \textstyle
    \nabla \cdot\left\{\widehat{x}P\right\}
    +
    [1+2(\Im \kappa)r]
      \cdot\frac{\phi}{r}|v'|^{2}
    +
    [2(\Im \kappa)(c+\sigma)r
    -((c+\sigma)r)']
      \cdot\frac{\phi}{r}|v|^{2}
    =
    \Re(-2 g \phi \overline{v'}).
  \end{equation}
  We integrate \eqref{eq:idbase2} on $B(0,R)\setminus B(0,r)$,
  $r<R$, and we check the behaviour of the boundary terms
  as $r\to0$, $R\to+\infty$. Near zero, we must prove that
  \begin{equation*}
    \textstyle
    \liminf_{r\to0^{+}}\int_{|x|=r}
    [(\sigma+c)\phi|v|^{2}-\phi|v'|^{2}] dS\le0
  \end{equation*}
  thus we can drop the second term $-\phi|v'|^{2}$ and
  focus on the first one.
  Recall that
  \begin{equation*}
    |v|^{2}=e^{A}e^{2(\Im \kappa)r}|u|^{2},\qquad
    \phi|v|^{2}=e^{A}r^{2-n}|u|^{2}
    \sim r|u|^{2}
    \ \text{near 0}\
  \end{equation*}
  since by assumption $e^{A}r^{1-n}\to C$ as $r\to0$.
  Noticing that the assumption on $Q$ implies
  $|\sigma+c|\le C r^{-2}$, we see that
  it is sufficient to prove
  \begin{equation}\label{eq:near0u}
    \textstyle
    \liminf_{r\to0^{+}}r^{-1}\int_{|x|=r}|u|^{2}dS=0.
  \end{equation}
  The assumption
  $u,u'\in L^{2}(\gamma dx)$ implies
  $u,u'\in L^{2}_{loc}(\mathbb{R}^{n})$ with the standard norm
  since $\gamma\sim C$ near 0,
  and hence, by the usual Hardy inequality, we have
  $u/r\in L^{2}_{loc}(\mathbb{R}^{n})$ which gives
  \eqref{eq:near0u}. For future reference
  we note also that
  \begin{equation}\label{eq:near0}
    \textstyle
    \liminf_{r\to0^{+}}\frac1r\int_{|x|=r}|u|^{2}dS=0
    \quad\implies\quad
    \liminf_{r\to0^{+}}
    r^{-n}
    \int_{|x|=r}|v|^{2}dS=0,
  \end{equation}
  by definition of $v$ and the assumption
  $e^{A}r^{1-n}\to C>0$. We then consider the boundary
  term on $\partial B(0,R)$ as $R\to+\infty$; we must prove that
  \begin{equation*}
    \textstyle
    \liminf_{R\to+\infty}\int_{|x|=R}
    [(\sigma+c)\phi|v|^{2}-\phi|v'|^{2}] dS\ge0
  \end{equation*}
  For the first term we write, recalling that
  $|\sigma+c|\le C r^{-2}$,
  \begin{equation*}
    |(\sigma+c)|\phi|v|^{2}=
    |(\sigma+c)|r \gamma|u|^{2}
    \le C r^{-1}\gamma|u|^{2}
  \end{equation*}
  and then the assumption
  $\gamma|u|^{2}\in L^{1}(\mathbb{R}^{n})$
  implies
  $\liminf_{R\to+\infty}R^{-1}\int_{|x|=R}\gamma|u|^{2}dS=0$.
  For the second term, we have
  \begin{equation*}
    \textstyle
    \int_{|x|=R}\phi|v'|^{2}dS
    \le
    CR\int_{|x|=R}(|\kappa|^{2}+a^{2})(|u|^{2}+|u'|^{2})\gamma(R)dS
  \end{equation*}
  and by the assumptions $u,u'\in L^{2}(\gamma dx)$ and
  $|a|\le C$ for large $r$ we have
  \begin{equation*}
    \liminf_{R\to+\infty}
    R\int_{|x|=R}(|\kappa|^{2}+a^{2})(|u|^{2}+|u'|^{2})\gamma(R)dS
    =0
  \end{equation*}
  as required.

  Thus we are in position to integrate \eqref{eq:idbase2}
  on $\mathbb{R}^{n}$:
  \begin{equation*}
    \textstyle
    \int
    [1+2(\Im \kappa)r]
      \cdot\frac{\phi}{r}|v'|^{2}dx
    +
    \int
    [2(\Im \kappa)(c+\sigma)r
    -((c+\sigma)r)']
      \cdot\frac{\phi}{r}|v|^{2}dx
    \le
    \int
    \Re(-2 g \phi \overline{v'})dx.
  \end{equation*}
  We estimate the right hand side by Cauchy-Schwartz
  and absorb a term at left, obtaining
  for any $0<\delta_{0}<1$
  \begin{equation}\label{eq:intbase}
    \textstyle
    \int
    [1-\delta_{0}+2(\Im \kappa)r]
      \cdot\frac{\phi}{r}|v'|^{2}dx
    +
    \int
    [2(\Im \kappa)(c+\sigma)r
    -((c+\sigma)r)']
      \cdot\frac{\phi}{r}|v|^{2}dx
    \le
    \frac1 {\delta_{0}}
    \int r \phi|g|^{2}dx
  \end{equation}
  Now we apply \eqref{eq:hardy2} of the previous
  Corollary with the choice $\zeta(r)=r$; note that the
  assumption on the behaviour of the function
  near 0 has already been checked in \eqref{eq:near0}.
  Recalling that $\phi/r=e^{-2(\Im \kappa)r}r^{1-n}$,
  this gives
  \begin{equation*}
    \textstyle
    \frac14
    \int[1+2(\Im \kappa)r]\frac{\phi}{r}\frac{|v|^{2}}{r^{2}}dx
    \le
    \int[1+2(\Im \kappa)r]\frac{\phi}{r}|v'|^{2}dx.
  \end{equation*}
  Using this inequality in \eqref{eq:intbase} we obtain,
  for any $0<\delta_{1}<1$,
  \begin{equation*}
    \textstyle
    \int
    [2(\Im \kappa)Q_{0}(r)
    -Q_{0}'(r)]
    \frac{\phi}{r}|v|^{2}dx
    \le
    \frac1 {\delta_{1}}
    \int r \phi|g|^{2}dx
  \end{equation*}
  where
  \begin{equation*}
    Q_{0}(r)=\frac{1-\delta_{1}}{4r}+(c+\sigma)r.
  \end{equation*}
  Since $\phi|v|^{2}=r \gamma|u|^{2}$ and
  $\phi|g|^{2}=r \gamma|f|^{2}$, this is equivalent to
  \begin{equation*}
    \textstyle
    \int
    [2(\Im \kappa)Q_{0}(r)
    -Q_{0}'(r)]
    |u|^{2}\gamma dx
    \le
    \frac1 {\delta_{1}}
    \int r^{2}|f|^{2}\gamma dx.
  \end{equation*}
  Choose $\delta_{1}=\delta_{0}/2$; we have
  by assumption
  \begin{equation*}
    Q_{0}(r)\ge Q(r)\ge0,\qquad
    -Q_{0}'(r)=
    -Q'(r)+\frac{\delta_{0}-\delta_{1}}{4r^{2}}
    \ge \frac{\delta_{0}}{8r^{2}}
  \end{equation*}
  and this concludes the proof.
\end{proof}

We now specialize the previous estimate to the resolvent equation
for a Laplace-Beltrami operator on the manifold $M$
with global metric $dr^{2}+h(r)^{2}d \omega^{2}_{\mathbb{S}^{n-1}}$,
restricted to radial functions:
\begin{equation}\label{eq:manifres}
  u''+(n-1)\frac{h'}{h}u'+\lambda^{2}u=f.
\end{equation}
We have the following result:

\begin{corollary}\label{cor:smoomanif}
  Let $n\ge3$,
  $\lambda\in \mathbb{C}$ with $\Im \lambda>0$,
  $h\in C^{2}([0,+\infty))$, with $h>0$ for $r>0$, $h(0)=0$, and
  $h'(0)=1$, and
  define the functions
  \begin{equation}\label{eq:defhtilde}
     \mu(r)=\left(\frac{h}{r}\right) ^{n-1},
     \qquad
     \widetilde{h}(r)=
     \frac{n-1}{2}\left(\frac{h''}{h}
       +\frac{n-3}{2}
       \frac{h'^{2}}{h^{2}}
     \right).
   \end{equation}
   Assume that
   $h_{\infty}:=\lim_{r\to+\infty}\widetilde{h}(r)\ge0$
   exists and that, for some $0<\delta_{0}<1$
   and $C>0$, the function
   \begin{equation}\label{eq:defP}
     P(r)=r(\widetilde{h}(r)-h_{\infty})+\frac{1-\delta_{0}}{4r}
   \end{equation}
   satisfies the conditions
   \begin{equation}\label{eq:condP}
     0\le rP(r)\le C,\qquad
     P'(r)\le0.
   \end{equation}
   Then any radial solution $u\in H^{2}_{loc}(\mathbb{R}^{n})$
   of equation \eqref{eq:manifres} such that
   $u,u'\in L^{2}(\mu(|x|)dx)$ satisfies the estimate
   \begin{equation}\label{eq:smooe2}
     \||x|^{-1}u\|_{L^{2}(\mu(|x|) dx)}
     \le
     4 \delta_{0}^{-1}
     \||x|f\|_{L^{2}(\mu(|x|) dx)}.
   \end{equation}
\end{corollary}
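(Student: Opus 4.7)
The plan is to recognize \eqref{eq:manifres} as an instance of the general equation \eqref{eq:eqbase} and apply Theorem \ref{the:smoo} directly. I would set
\begin{equation*}
  a(r) := (n-1)\frac{h'(r)}{h(r)}, \qquad c(r) := -h_{\infty}, \qquad \kappa := \sqrt{\lambda^{2} - h_{\infty}},
\end{equation*}
choosing the branch of the square root so that $\Im \kappa > 0$. This is possible because $\lambda^{2} - h_{\infty}$ never lies in $[0,\infty)$ when $\Im \lambda > 0$ and $h_{\infty} \ge 0$: indeed $\lambda^{2}$ is real only when $\lambda = it$, $t>0$, giving $\lambda^{2} - h_{\infty} = -t^{2} - h_{\infty} < 0$; otherwise $\lambda^{2} - h_{\infty}$ is non-real. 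With these identifications \eqref{eq:manifres} takes exactly the form $u'' + a u' + \kappa^{2} u - c u = f$ of \eqref{eq:eqbase}.

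I would then match the weight and the potential-type quantity. Taking $A(r) = (n-1)\log h(r)$ as a primitive of $a$ gives $e^{A} = h^{n-1}$ and hence
\begin{equation*}
  \gamma(r) = e^{A(r)} r^{1-n} = \left(\frac{h(r)}{r}\right)^{n-1} = \mu(r);
\end{equation*}
in particular $\lim_{r \to 0^{+}} \gamma(r) = 1$ since $h(r)/r \to h'(0) = 1$. A short calculation, using $a'/2 = \frac{n-1}{2}(h''/h - (h'/h)^{2})$ and $a^{2}/4 = \frac{(n-1)^{2}}{4}(h'/h)^{2}$, yields $a'/2 + a^{2}/4 = \widetilde{h}(r)$, and therefore the function $Q$ of Theorem \ref{the:smoo} becomes
\begin{equation*}
  Q(r) = \left(\widetilde{h}(r) + c(r)\right) r + \frac{1-\delta_{0}}{4r} = r(\widetilde{h}(r) - h_{\infty}) + \frac{1-\delta_{0}}{4r} = P(r).
\end{equation*}
So the hypotheses $0 \le rQ \le C$ and $Q' \le 0$ of Theorem \ref{the:smoo} coincide with \eqref{eq:condP}.

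The one remaining item is that $a$ be bounded for large $r$. Writing $p := h'/h$, differentiation produces the Riccati identity
\begin{equation*}
  \widetilde{h} = \frac{n-1}{2} p' + \frac{(n-1)^{2}}{4} p^{2},
\end{equation*}
and the convergence $\widetilde{h}(r) \to h_{\infty}$ (with the even stronger rate $\widetilde{h} - h_{\infty} = O(r^{-2})$ supplied by the bound $rP \le C$) forces $p$ to remain bounded at infinity, so $a = (n-1)p$ is bounded there as required. With all hypotheses of Theorem \ref{the:smoo} verified and $\gamma = \mu$, estimate \eqref{eq:smooe} applied to our identifications reads precisely as \eqref{eq:smooe2}. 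I do not anticipate any real obstacle: the genuine analysis lives in Theorem \ref{the:smoo}, and the corollary is essentially a translation, with the only mild subtlety being the branch choice for $\kappa$.
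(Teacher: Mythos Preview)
Your argument is correct, but it differs from the paper's route. The paper first performs the substitution $w=\mu^{1/2}u$, $g=\mu^{1/2}f$, which transforms \eqref{eq:manifres} into an equation with the \emph{flat} first-order coefficient $a(r)=(n-1)/r$ and potential $c(r)=\widetilde{h}-h_{\infty}-\frac{(n-1)(n-3)}{4r^{2}}$; Theorem~\ref{the:smoo} is then applied with $\gamma\equiv 1$, and the boundedness of $a$ at infinity is trivial. You instead apply Theorem~\ref{the:smoo} directly with $a=(n-1)h'/h$ and constant $c=-h_{\infty}$, obtaining $\gamma=\mu$ without any change of variables; the identification $Q\equiv P$ is immediate either way. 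The price of your more direct path is the extra step of showing that $h'/h$ stays bounded at infinity, which you handle via the Riccati relation $\frac{n-1}{2}p'+\frac{(n-1)^{2}}{4}p^{2}=\widetilde{h}$: since $\widetilde{h}$ is bounded and $p$ is globally defined (because $h>0$ on all of $(0,\infty)$), any excursion of $|p|$ to large values would force $p'\lesssim -p^{2}$ and hence finite-time blow-down, a contradiction. That ODE argument is sound, and once it is in place the two proofs are equivalent; the paper's preliminary substitution simply trades this Riccati step for an explicit conjugation.
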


\begin{proof}
  Let $w(r)=\mu(r)^{\frac12}u(r)$ and
  $g(r)=\mu(r)^{\frac12}f(r)$,
  then $w(r)$ satisfies the equation
  \begin{equation*}
    \textstyle
    w''+\frac{n-1}{r}w'
    +\lambda^{2}w-
    \left(\widetilde{h}-\frac{(n-1)(n-3)}{4r^{2}}\right)w
    =g.
  \end{equation*}
  Setting $\lambda^{2}-h_{\infty}=\kappa^{2}$ with
  $\Im \kappa>0$ (possible since $h_{\infty}\ge0$),
  we rewrite the equation as
  \begin{equation*}
    \textstyle
    w''+\frac{n-1}{r}w'
    +\kappa^{2} w-
    \left(\widetilde{h}-h_{\infty}
    -\frac{(n-1)(n-3)}{4r^{2}}\right)w
    =g.
  \end{equation*}
  Now we can apply Theorem \ref{the:smoo} with the choices
  $a(r)=(n-1)/r$, $A=(n-1)\log r$, $\gamma(r)=1$,
  $c(r)=\widetilde{h}-h_{\infty}-(n-1)(n-3)/(4r^{2})$ so that
  $Q(r)\equiv P(r)$ as one checks immediately. Thus all
  the assumptions of the Theorem are satisfied and we
  get the estimate
  \begin{equation*}
    \||x|^{-1}w\|_{L^{2}(\mathbb{R}^{n})}\le
    4 \delta^{-1}
    \||x|g\|_{L^{2}(\mathbb{R}^{n})}
  \end{equation*}
  which coincides with \eqref{eq:smooe2}.
\end{proof}

If we apply the change of variables
\begin{equation*}
  u(r)=
  \frac{r^{k+\frac{n-1}{2}}}{h(r)^{\frac{n-1}{2}}}v(r),
  \qquad
  f(r)=
  \frac{r^{k+\frac{n-1}{2}}}{h(r)^{\frac{n-1}{2}}}g(r),
  \qquad
  k=0,1,2,\dots
\end{equation*}
we see that $u(r)$ solves \eqref{eq:manifres} if and only if
$v(r)$ solves the following equation,
which we shall regard as a radial equation on
$\mathbb{R}^{m}$:
\begin{equation}\label{eq:resV}
  v''+\frac{m-1}{r}v'+\lambda^{2}v-V(r)v=g,\qquad
  m=2k+n
\end{equation}
where
\begin{equation}\label{eq:defV}
  V(r)=
  \frac{n-1}{2}
  \left[
    \frac{h''}{h}
    +  \frac{n-3}{2}
    \left(
      \frac{h'^{2}}{h^{2}}-\frac{1}{r^{2}}
    \right)
  \right]
  +k(k+n-2)\left(
    \frac{1}{h^{2}}-\frac{1}{r^{2}}
  \right).
\end{equation}
If $h(r)$ satisfies the assumptions of
Corollary \ref{cor:smoomanif}, and
we apply the previous change of variables
in estimate \eqref{eq:smooe2}, we obtain:

\begin{corollary}\label{cor:smooV}
  Let $n\ge3$, $k\ge0$, $m=2k+n$,
  $\lambda\in \mathbb{C}$ with $\Im \lambda>0$, and
  let $h(r)$ be as in Corollary \ref{cor:smoomanif}.
  Then any radial solution $v\in H^{2}_{loc}(\mathbb{R}^{m})$
  of equation \eqref{eq:resV}-\eqref{eq:defV}
  such that
  $v,v'\in L^{2}(\mathbb{R}^{m})$ satisfies the estimate
  \begin{equation}\label{eq:smooeV}
    \||x|^{-1}v\|_{L^{2}(\mathbb{R}^{m})}\lesssim
    \||x|g\|_{L^{2}(\mathbb{R}^{m})}.
  \end{equation}
\end{corollary}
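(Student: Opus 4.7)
The plan is to reduce this to Corollary \ref{cor:smoomanif} via the change of variables
$$u(r) = \frac{r^{k+(n-1)/2}}{h(r)^{(n-1)/2}}\, v(r), \qquad f(r) = \frac{r^{k+(n-1)/2}}{h(r)^{(n-1)/2}}\, g(r),$$
which (as indicated just above the statement) turns a solution of the $\mathbb{R}^m$ equation \eqref{eq:resV}--\eqref{eq:defV} into a solution of the radial resolvent equation \eqref{eq:manifres} on the manifold $M^n$. This identification is a routine verification that the conjugation
$w^{-1}\bigl(\Delta_M + \lambda^2\bigr)w$ (with $w = r^{k+(n-1)/2}h^{-(n-1)/2}$) agrees with $\Delta_m + \lambda^2 - V(r)$ acting on radial functions, i.e.\ essentially the same computation used to derive \eqref{eq:perteq}.

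Once the two equations are matched, I need to translate norms. Using $\mu(r) = (h/r)^{n-1}$ and polar coordinates, for any radial $\varphi(r)$,
$$\|w \varphi\|_{L^2(\mu\,dx)}^2 = c_n\!\int_0^\infty \frac{r^{2k+n-1}}{h^{n-1}}|\varphi|^2\,\frac{h^{n-1}}{r^{n-1}}\,r^{n-1}\,dr = c_n\!\int_0^\infty r^{2k+n-1}|\varphi|^2\,dr \simeq \|\varphi\|_{L^2(\mathbb{R}^m)}^2,$$
since $m-1 = 2k+n-1$. Applied to $\varphi = v$, $\varphi = |x|^{-1}v$, and $\varphi = |x|g$, this gives
$$\||x|^{-1}u\|_{L^2(\mu\,dx)} \simeq \||x|^{-1}v\|_{L^2(\mathbb{R}^m)}, \qquad \||x|f\|_{L^2(\mu\,dx)} \simeq \||x|g\|_{L^2(\mathbb{R}^m)},$$
so the conclusion \eqref{eq:smooeV} is exactly \eqref{eq:smooe2} rewritten in the new variable.

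It remains to check that $v, v' \in L^2(\mathbb{R}^m)$ really implies the hypothesis $u, u' \in L^2(\mu\,dx)$ of Corollary \ref{cor:smoomanif}. The term $\|wv'\|_{L^2(\mu\,dx)} \simeq \|v'\|_{L^2(\mathbb{R}^m)}$ is immediate by the same computation. For the remaining piece, $u' = w'v + wv'$, with
$$\frac{w'}{w} = \frac{2k+n-1}{2r} - \frac{n-1}{2}\frac{h'}{h},$$
which is $O(1/r)$ near $0$ (using $h(r) \sim r$) and bounded for large $r$ (since $h'/h$ is bounded, as follows from $h(r) \gtrsim r$ together with the assumptions on $h$). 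Hence $\|w'v\|_{L^2(\mu\,dx)}$ is controlled by $\||x|^{-1}v\|_{L^2(\mathbb{R}^m)} + \|v\|_{L^2(\mathbb{R}^m)}$, and the former is finite by the standard Hardy inequality on $\mathbb{R}^m$ applied to $v$ (using $m \ge 5$). The $H^2_{loc}$ regularity transfers in both directions since $w$ is smooth and positive for $r>0$.

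The main (mild) obstacle is just being careful about the weight bookkeeping and about the behavior of $w'/w$ at the origin and infinity; once these are in hand, the result is an immediate corollary of \ref{cor:smoomanif}.
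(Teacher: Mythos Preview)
Your proof is correct and follows exactly the paper's route: reduce to Corollary~\ref{cor:smoomanif} via $u=Wv$, $f=Wg$ with $W=r^{k+(n-1)/2}h^{-(n-1)/2}$, and translate norms through the identity $W^{2}\mu\,r^{n-1}=r^{m-1}$. Two minor inaccuracies in your hypothesis check: Hardy on $\mathbb{R}^{m}$ only needs $m\ge3$ (which holds for every $k\ge0$, $n\ge3$), not $m\ge5$; and the condition $h\gtrsim r$ is \emph{not} among the assumptions of Corollary~\ref{cor:smoomanif}, so your justification that $h'/h$ is bounded at infinity is incomplete as written. The claim is nevertheless true: setting $y=h'/h$ one has the Riccati relation $y'=\tfrac{2\tilde h}{n-1}-\tfrac{n-1}{2}y^{2}$, and since $\tilde h$ is bounded (it converges to $h_{\infty}$ with $rP(r)$ bounded), a comparison argument shows $y$ stays bounded for $r\ge1$ --- large positive $y$ forces $y'<0$, while finite-time negative blowup would drive $h\to0$, contradicting $h>0$ on $(0,\infty)$.
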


We notice that, defining
\begin{equation}\label{eq:potW}
  W(r)=V(r)-h_{\infty}
\end{equation}
equation \eqref{eq:resV} can be written
\begin{equation*}
  \Delta v+(\lambda^{2}-h_{\infty})v
  -W(r)v=g
\end{equation*}
where $\Delta$ is the Laplace operator on $\mathbb{R}^{m}$
(restricted to radial functions); moreover, it is easy to
check that the first part of assumption \eqref{eq:condP}
and the condition $h(r)\ge Cr$ for a $C>0$ imply
\begin{equation*}
  \frac{C'}{|x|^{2}}\ge W(r)\ge
  -\frac{(m-2)^{2}-\delta_{0}}{4|x|^{2}}.
\end{equation*}
By Hardy's inequality we obtain
\begin{equation*}
  \|\nabla u\|_{L^{2}(\mathbb{R}^{m})}\simeq
  (Hv,v).
\end{equation*}
Thus the operator
$H=-\Delta+W(r)$ is selfadjoint and positive
definite on $L^{2}(\mathbb{R}^{m})$, and by interpolation
and duality we have, as in Lemma \ref{lem:sobhom},
the equivalence of norms
\begin{equation}\label{eq:equivnorms}
  \|H^{s/2}v\|_{L^{2}(\mathbb{R}^{m})}
  \simeq
  \|v\|_{\dot H^{s}(\mathbb{R}^{m})},\qquad
  -1\le s\le1
\end{equation}
and analogously, for every $\nu>0$
(with a constant depending on $\nu$),
\begin{equation}\label{eq:equivnorms2}
  \|(\nu+H)^{s/2}v\|_{L^{2}(\mathbb{R}^{m})}
  \simeq
  \|v\|_{H^{s}(\mathbb{R}^{m})},\qquad
  -1\le s\le1.
\end{equation}

We can now apply Kato's theory to deduce, from the
resolvent estimate \eqref{eq:smooe2}, corresponding
smoothing estimates for the associated
evolution equations. In the terminology of
\cite{Kato65-b}, \cite{KatoYajima89-a}, estimate
\eqref{eq:smooe2} implies that multiplication
by $|x|^{-1}$ is \emph{supersmoothing} for the operator
$H$, and this implies the estimate
\begin{equation*}
  \||x|^{-1}e^{itH}f\|_{L^{2}(\mathbb{R}^{m+1})}
  \lesssim
  \|f\|_{L^{2}(\mathbb{R}^{m})}
\end{equation*}
for the Schr\"{o}dinger flow $e^{itH}$.
Using the appendix to Kato's theory developed in
\cite{DAncona14-a} we obtain the analogous result for
the wave flow:

\begin{theorem}\label{the:smooSWE}
  Let $n\ge3$, $k\ge0$, $m=2k+n$,
  let $h(r)$ be as in Corollary \ref{cor:smoomanif}
  and assume in addition $h(r)\ge cr$ for some $c>0$.
  Let $V(r)$ be the function \eqref{eq:defV}, $W=V-h_{\infty}$,
  and let $H$ be the selfadjoint
  nonnegative operator on $L^{2}(\mathbb{R}^{m})$
  given by $H=-\Delta+W(r)$.
  Then the wave flow $e^{it \sqrt{H}}$, restricted to
  radial functions, satisfies the smoothing estimate
  \begin{equation}\label{eq:smooWE}
    \||x|^{-1}e^{it \sqrt{H}}f\|
       _{L^{2}(\mathbb{R}^{m+1})}
    \lesssim \|f\|_{\dot H^{1/2}(\mathbb{R}^{m})},
  \end{equation}
  and, for any $\nu>0$,
  the Klein-Gordon flow $e^{it \sqrt{\nu+H}}$
  satisfies the smoothing estimate on radial functions
  \begin{equation}\label{eq:smooKG}
    \||x|^{-1}e^{it \sqrt{\nu+H}}f\|
       _{L^{2}(\mathbb{R}^{m+1})}
    \lesssim \|f\|_{H^{1/2}(\mathbb{R}^{m})}.
  \end{equation}
\end{theorem}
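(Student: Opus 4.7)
The plan is to extract the two smoothing estimates directly from the resolvent bound of Corollary \ref{cor:smooV} via Kato's abstract supersmoothing theory, in the form adapted to half-wave and Klein-Gordon propagators in \cite{DAncona14-a}. The first step will be to rephrase \eqref{eq:smooeV} in operator form: setting $z = \lambda^{2}-h_{\infty}$, equation \eqref{eq:resV} reads $(H-z)v=-g$, and substituting $g=|x|^{-1}f$ turns \eqref{eq:smooeV} into
\begin{equation*}
  \||x|^{-1}(H-z)^{-1}|x|^{-1}f\|_{L^{2}(\mathbb{R}^{m})}
  \lesssim \|f\|_{L^{2}(\mathbb{R}^{m})}.
\end{equation*}
As $\lambda$ ranges over the upper half plane, $\lambda^{2}$ covers $\mathbb{C}\setminus[0,\infty)$, so $z$ covers $\mathbb{C}\setminus[-h_{\infty},\infty)$, in particular every $z$ with $\Im z>0$; self-adjointness of $H$ then extends the bound to $\Im z<0$ via $(H-\bar z)^{-1}=((H-z)^{-1})^{*}$. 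This says precisely that multiplication by $|x|^{-1}$ is $H$-\emph{supersmoothing} in the sense of \cite{Kato65-b, KatoYajima89-a}, on the invariant subspace of radial functions (both $-\Delta$ and $W(r)$ commute with rotations, so $H$ preserves radiality, as do the flows $e^{it\sqrt{H}}$ and $e^{it\sqrt{\nu+H}}$).

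Next I will invoke the extension of Kato's theorem in \cite{DAncona14-a}. Classically, supersmoothing of $A$ for $H$ gives the Schr\"{o}dinger bound $\|Ae^{-itH}f\|_{L^{2}_{t,x}}\lesssim\|f\|_{L^{2}}$; the spectral argument in \cite{DAncona14-a} shows that the same hypothesis yields the half-wave and Klein-Gordon bounds with an $H^{1/4}$ (resp.\ $(\nu+H)^{1/4}$) weight on the data,
\begin{equation*}
  \||x|^{-1}e^{it\sqrt{H}}f\|_{L^{2}_{t,x}} \lesssim \|H^{1/4}f\|_{L^{2}},
  \qquad
  \||x|^{-1}e^{it\sqrt{\nu+H}}f\|_{L^{2}_{t,x}} \lesssim \|(\nu+H)^{1/4}f\|_{L^{2}}.
\end{equation*}
The $1/4$-power loss is the conceptual heart of the argument: it reflects the half-order symbol of $\sqrt{H}$ relative to $H$ and is exactly what accounts for the final $\dot H^{1/2}$ (resp.\ $H^{1/2}$) regularity.

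Finally, I will translate these bounds into Sobolev norms via the equivalences \eqref{eq:equivnorms} and \eqref{eq:equivnorms2}, which give $\|H^{1/4}f\|_{L^{2}}\simeq\|f\|_{\dot H^{1/2}}$ and $\|(\nu+H)^{1/4}f\|_{L^{2}}\simeq\|f\|_{H^{1/2}}$. These equivalences rely on the positivity of $H$; the pointwise bound $W(r)\ge -((m-2)^{2}-\delta_{0})/(4|x|^{2})$, which follows from $h(r)\ge cr$ together with the assumption on $P(r)$, combined with Hardy's inequality, ensures $(Hu,u)\simeq\|\nabla u\|_{L^{2}}^{2}$, and thus places us in the setting of Lemma \ref{lem:sobhom}.

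The substantive analytic work — the uniform resolvent/supersmoothing estimate — has already been performed in Corollary \ref{cor:smooV}, so the remainder is essentially a matter of verifying that what we have is exactly the abstract hypothesis of \cite{DAncona14-a}. The only step where I foresee any delicacy is matching the spectral range of the resolvent bound to the form required for the half-wave propagator (as opposed to the Schr\"{o}dinger propagator), since the naive parametrization $z=\lambda^{2}-h_{\infty}$ a priori only covers $\mathbb{C}\setminus[-h_{\infty},\infty)$; the symmetry argument above patches this to the full $\{\Im z\neq 0\}$, which is what the abstract theorem needs.
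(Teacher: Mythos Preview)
Your proposal is correct and follows essentially the same route as the paper: both recast the resolvent bound of Corollary \ref{cor:smooV} as supersmoothing of $|x|^{-1}$ for $H$, then invoke Theorem 2.4 of \cite{DAncona14-a} to conclude that $|x|^{-1}(H+\nu)^{-1/4}$ is supersmoothing for $\sqrt{H+\nu}$ and hence obtain the flow estimates, finally converting to Sobolev norms via \eqref{eq:equivnorms}--\eqref{eq:equivnorms2}. Your discussion of the spectral range of $z=\lambda^{2}-h_{\infty}$ is more explicit than the paper's (which glosses over this point), but the argument is the same.
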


\begin{proof}
  By Theorem 2.4 in
  \cite{DAncona14-a}
  the operator $|x|^{-1}(H+\nu)^{-\frac14}$
  is supersmoothing with respect to $\sqrt{H+\nu}$.
  Then by Kato's theory we deduce the estimate
  \begin{equation*}
    \||x|^{-1}\sqrt{H+\nu}^{-\frac12}
        e^{-t \sqrt{H+\nu}}f\|_{L^{2}(\mathbb{R}^{m+1})}
    \lesssim
    \|f\|_{L^{2}(\mathbb{R}^{m})}.
  \end{equation*}
  When $\nu=0$, using \eqref{eq:equivnorms}, we obtain
  \eqref{eq:smooWE}, while for $\nu>0$, by
  \eqref{eq:equivnorms2}, we get \eqref{eq:smooKG}.
\end{proof}

Now, using the method of Rodnianski and Schlag
\cite{RodnianskiSchlag04-a}, it is a simple task to deduce the
full range of non-endpoint Strichartz estimates for
the wave and Klein-Gordon equations associated to the
operator $H$. The following is the main result of
this section; we sum up in the statement the previous
assumptions and notations.

\begin{theorem}\label{the:strichartz}
  Let
  $h\in C^{2}([0,+\infty))$ with $h(r)\ge cr$ for some $c>0$,
  $h(0)=0$ and $h'(0)=1$. Define for $n\ge3$
  \begin{equation}\label{eq:htilde2b}
     \widetilde{h}(r)=
     \frac{n-1}{2}\left(\frac{h''}{h}
       +\frac{n-3}{2}
       \frac{h'^{2}}{h^{2}}
     \right).
  \end{equation}
  Assume that
  $h_{\infty}:=\lim_{r\to+\infty}\widetilde{h}(r)\ge0$
  exists and that, for some $0<\delta_{0}<1$
  and $C>0$, the function
  \begin{equation}\label{eq:defP2}
    P(r)=r(\widetilde{h}(r)-h_{\infty})+\frac{1-\delta_{0}}{4r}
  \end{equation}
  satisfies the conditions
  \begin{equation}\label{eq:condP2}
    0\le rP(r)\le C,\qquad
    P'(r)\le0.
  \end{equation}
  Finally, let $k\ge0$, define $V(r)$ as
  \begin{equation}\label{eq:defV2b}
   V(r)=
   \frac{n-1}{2}
   \left[
     \frac{h''}{h}
     +  \frac{n-3}{2}
     \left(
       \frac{h'^{2}}{h^{2}}-\frac{1}{r^{2}}
     \right)
   \right]
   +k(k+n-2)\left(
     \frac{1}{h^{2}}-\frac{1}{r^{2}}
   \right),
  \end{equation}
  and let $H$ be the selfadjoint operator on
  $L^{2}(\mathbb{R}^{m})$, with $m=2k+n$,
  defined by $H=-\Delta+W(|x|)$, $W(r):=V(r)-h_{\infty}$.

  Then the wave flow $e^{it \sqrt{H}}$
  on $\mathbb{R}^{t}\times \mathbb{R}^{m}$
  satisfies the following Strichartz estimates:
  for radial $f$,
  \begin{equation}\label{eq:striWE}
    \||D|^{\frac1q-\frac 1p}
        e^{it \sqrt{H}}f\|_{L^{p}_{t}L^{q}_{x}}
    \lesssim
    \|f\|_{\dot H^{\frac12}(\mathbb{R}^{m})},
    \qquad
    |D|=(-\Delta)^{\frac12},
  \end{equation}
  provided $(p,q)$ satisfy
  \begin{equation}\label{eq:admpq}
    \frac2p+\frac {m-1}{q}=\frac{m-1}{2},\qquad
    2<p\le \infty,\qquad
    2\le q < \frac{2(m-1)}{m-3}
  \end{equation}
  while for fixed $\nu>0$
  the Klein-Gordon flow $e^{it \sqrt{H+\nu}}$
  on $\mathbb{R}^{t}\times \mathbb{R}^{m}$
  satisfies, for radial $f$,
  \begin{equation}\label{eq:striKG}
    \|\bra{D}^{\frac1q-\frac 1p}
        e^{it \sqrt{\nu+H}}f\|_{L^{p}_{t}L^{q}_{x}}
    \lesssim
    \|f\|_{ H^{\frac12}(\mathbb{R}^{m})},
    \qquad
    \bra{D}=(1-\Delta)^{\frac12},
  \end{equation}
  provided $(p,q)$ satisfy either \eqref{eq:extadmWE} or
  \begin{equation}\label{eq:admpqS}
    \frac2p+\frac {m}{q}=\frac{m}{2},\qquad
    2<p\le \infty,\qquad
    2 \le q <\frac{2m}{m-2}.
  \end{equation}
\end{theorem}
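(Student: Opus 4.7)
The plan is to follow the method of Rodnianski and Schlag \cite{RodnianskiSchlag04-a}: combine the perturbed smoothing estimate from Theorem \ref{the:smooSWE} with the standard free Strichartz estimates on $\mathbb{R}^m$. First I observe that $u(t)=e^{it\sqrt H}f$ solves the perturbed free wave equation $u_{tt}-\Delta u=-Wu$ with $u(0)=f$ and $u_t(0)=i\sqrt H f$, so Duhamel's formula gives
\begin{equation*}
  e^{it\sqrt H}f=\cos(t|D|)f+\frac{\sin(t|D|)}{|D|}\,i\sqrt H f-\int_0^t\frac{\sin((t-s)|D|)}{|D|}\,W\,e^{is\sqrt H}f\,ds.
\end{equation*}
I then apply $|D|^{1/q-1/p}$, take the $L^p_tL^q_x$ norm, and bound the three pieces separately.

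For the two homogeneous terms I invoke the standard free-wave Strichartz estimate
$\||D|^{1/q-1/p}e^{it|D|}g\|_{L^p_tL^q_x}\lesssim\|g\|_{\dot H^{1/2}}$, valid for any $(p,q)$ as in \eqref{eq:admpq}. The first piece is thus controlled by $\|f\|_{\dot H^{1/2}}$ directly; for the second, an extra factor $|D|^{-1}\sqrt H$ is absorbed by the norm equivalence \eqref{eq:equivnorms}, which gives $\|\sqrt H f\|_{\dot H^{-1/2}}\simeq\|H^{1/4}f\|_{L^2}\simeq\|f\|_{\dot H^{1/2}}$.

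The main step is the Duhamel term. I plan to use the combined inhomogeneous Strichartz--smoothing estimate
\begin{equation*}
  \Bigl\||D|^{1/q-1/p}\int_0^t\frac{\sin((t-s)|D|)}{|D|}F(s)\,ds\Bigr\|_{L^p_tL^q_x}\lesssim\||x|F\|_{L^2_{t,x}},
\end{equation*}
which follows from a $TT^*$ argument combining free Strichartz with the dual free smoothing estimate, together with the Christ--Kiselev lemma to handle the retarded cutoff (allowed since $p>2$). Applying this with $F=W\,e^{is\sqrt H}f$, and noting that the explicit form of $V$ in \eqref{eq:defV2b}, the assumption \eqref{eq:condP2}, and the bound $h(r)\ge cr$ jointly force $|W(r)|\lesssim r^{-2}$, the right-hand side is dominated by
\begin{equation*}
  \||x|^{-1}e^{is\sqrt H}f\|_{L^2_{t,x}}\lesssim\|f\|_{\dot H^{1/2}}
\end{equation*}
by the smoothing estimate \eqref{eq:smooWE}. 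Collecting the three bounds proves \eqref{eq:striWE}.

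The Klein--Gordon estimate \eqref{eq:striKG} is established by the same scheme, with $|D|$, $\sqrt H$ and $\dot H^s$ replaced throughout by $\langle D\rangle$, $\sqrt{H+\nu}$ and $H^s$; the free estimates to feed in are the non-endpoint Klein--Gordon Strichartz estimates on the Schr\"{o}dinger-admissible range \eqref{eq:admpqS}, and the closing step invokes \eqref{eq:smooKG} and the equivalence \eqref{eq:equivnorms2}. The delicate part of the argument is less conceptual than bookkeeping: one must consistently track the various norm equivalences between $\sqrt H$, $|D|$, $\sqrt{H+\nu}$ and $\langle D\rangle$ at the $\dot H^{\pm 1/2}$, $H^{\pm 1/2}$ levels which appear during the Duhamel expansion, and verify at each step that the range of $(p,q)$ provided by free Strichartz plus Christ--Kiselev matches the one asserted in the statement.
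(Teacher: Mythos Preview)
Your proposal is correct and follows exactly the same Rodnianski--Schlag scheme as the paper: Duhamel with the free propagator, free Strichartz on the homogeneous pieces, the mixed Strichartz--smoothing estimate (via Christ--Kiselev and dual free smoothing) on the inhomogeneous piece, then $|W|\lesssim r^{-2}$ plus \eqref{eq:smooWE} to close. The only omission is that for the Klein--Gordon case the statement claims the estimate on the wave-admissible range as well as the Schr\"odinger-admissible one; the paper handles this by also invoking the free Klein--Gordon Strichartz estimates for wave-admissible $(p,q)$, which follow from Brenner's dispersive bound $\|e^{it\sqrt{1-\Delta}}\phi_j\|_{L^\infty}\lesssim |t|^{-(m-1)/2}2^{(m+1)/2}$ for frequency-localized data.
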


\begin{proof}
  By Duhamel's formula one can
  represent the flow $u(t,x)=e^{it \sqrt{H}}f$
  in terms of the unperturbed flow as
  \begin{equation*}
    e^{it \sqrt{H}}f=
    \cos(t |D|)f+
    i\sin(t |D|)|D|^{-1}\sqrt{H} f-
    \int_{0}^{t}\frac{\sin((t-s) |D|)}
    {|D|}W(r)uds.
  \end{equation*}
  For the first two terms, by the standard Strichartz
  estimates for the unperturbed wave equation we have
  \begin{equation*}
    \||D|^{\frac1q-\frac 1p}
        e^{it |D|}f\|_{L^{p}_{t}L^{q}_{x}}
    \lesssim
    \|f\|_{\dot H^{\frac12}}
  \end{equation*}
  and
  \begin{equation*}
    \||D|^{\frac1q-\frac 1p}
        |D|^{-1}e^{it |D|}\sqrt{H}f\|_{L^{p}_{t}L^{q}_{x}}
    \lesssim
    \|\sqrt{H}f\|_{\dot H^{-\frac12}}
    \lesssim
    \|f\|_{\dot H^{\frac12}}
  \end{equation*}
  with $(p,q)$ as in \eqref{eq:admpq}.
  In order to handle the Duhamel term, one uses
  the following mixed estimate for the free flow
  \begin{equation}\label{eq:duha}
    \left\||D|^{\frac1q-\frac 1p}
      \int_{0}^{t}\frac{e^{i(t-s)|D|}}{|D|}
          F(s,x)ds\right\|_{L^{p}_{t}L^{q}_{x}}
    \lesssim
    \||x| F\|_{L^{2}(\mathbb{R}^{m+1})}.
  \end{equation}
  This estimate is proved in a standard way as follows:
  first by the Christ-Kiselev lemma the estimate is
  equivalent to the similar estimate for the
  untruncated integral (provided $p>2$, which excludes
  the endpoint case); then the estimate is split into
  the homogeneous estimate for the free flow
  \begin{equation*}
    \||D|^{\frac1q-\frac 1p}e^{it|D|}|D|^{-\frac12}f\|
        _{L^{p}_{t}L^{q}_{x}}\lesssim
    \|f\|_{L^{2}}
  \end{equation*}
  composed with the dual smoothing estimate for the free flow
  \begin{equation*}
    \textstyle
    \|\int |D|^{-\frac12}e^{-is|D|}G(s,x)ds\|_{L^{2}}
    \lesssim
    \||x|G(t,x)\|_{L^{2}_{t}L^{2}_{x}}
  \end{equation*}
  (dual of \eqref{eq:smooWE} for the unperturbed
  wave equation). Now,
  plugging $F=Wu$ inside the right hand side of \eqref{eq:duha}
  and noticing that $|W|\le C|x|^{-2}$, we obtain
  also for the Duhamel term
  \begin{equation*}
    \left\||D|^{\frac1q-\frac 1p}
      \int_{0}^{t}\frac{e^{i(t-s)|D|}}{|D|}
          Wuds\right\|_{L^{p}_{t}L^{q}_{x}}
    \lesssim
    \||x| Wu\|_{L^{2}(\mathbb{R}^{m+1})}
    \le
    C\||x|^{-1}u\|_{L^{2}(\mathbb{R}^{m+1})}
  \end{equation*}
  which is bounded by $\|f\|_{L^{2}(\mathbb{R}^{m})}$
  using the smoothing estimate \eqref{eq:smooWE}.
  The three estimates together give \eqref{eq:striWE}.
  The proof for Klein-Gordon is identical; the
  estimates for the free flow which are required in the proof
  have the form
  \begin{equation*}
    \|\bra{D}^{\frac1q-\frac 1p}
        e^{it \sqrt{1-\Delta}}f\|_{L^{p}_{t}L^{q}_{x}}
    \lesssim
    \|f\|_{ H^{\frac12}(\mathbb{R}^{m})}.
  \end{equation*}
  Such estimates hold both if the couple $(p,q)$ is
  wave admissible, i.e. satisfies \eqref{eq:admpq}, and
  if it is Schr\"{o}dinger admissible, i.e. satisfies
  \eqref{eq:admpqS}.
  A complete proof in the second case can be found for
  instance in the Appendix of \cite{DAnconaFanelli08-a}.
  On the other hand for the first case the proof follows
  from the estimate
  \begin{equation*}
    j\ge1,\quad
    \phi_{j}\in \mathscr{S},
    \quad
    \spt \widehat{\phi_{j}}=\{|\xi|\sim 2^{j}\}
    \quad\implies\quad
    \|e^{it \sqrt{1-\Delta}}\phi_{j}\|
        _{L^{\infty}(\mathbb{R}^{m})}
    \lesssim
    |t|^{-\frac{m-1}{2}}2^{\frac{m+1}{2}}
  \end{equation*}
  (due to Brenner \cite{Brenner85-a})
  by the standard Ginibre-Velo procedure; note that we do not
  need the endpoint estimate.
\end{proof}

Using again
the Christ-Kiselev lemma and a $TT^{*}$ argument, we
deduce in a standard way the
nonhomogeneous Strichartz estimates from the previous
homogeneous estimates, at least in the non endpoint case.
We obtain

\begin{corollary}\label{cor:nonhstr}
  With the notations and the assumptions of the previous
  two Theorems, one has the estimate
  \begin{equation}\label{eq:strnhWE}
   \left\||D|^{\frac1q-\frac 1p}
     \int_{0}^{t}\frac{e^{i(t-s) \sqrt{H}}}{\sqrt{H}}F(s)ds
   \right\|_{L_t^pL_x^q}\lesssim
   \||D|^{-\frac1{\widetilde{q}}+\frac1{\widetilde{p}}}
        F\|_{L_t^{\widetilde{p}'}
        L_x^{\widetilde{q}'}}
  \end{equation}
  for all $F(t,x)$ radial in the space variable, and all
  couples $(p,q)$ and $(\widetilde{p},\widetilde{q})$
  as in \eqref{eq:admpq}. Similarly, we have
  for $\nu>0$
  \begin{equation}\label{eq:strnhKG}
   \left\|\bra{D}^{\frac1q-\frac 1p}
     \int_{0}^{t}\frac{e^{i(t-s) \sqrt{\nu+H}}}
       {\sqrt{\nu+H}}F(s)ds
   \right\|_{L_t^pL_x^q}\lesssim
   \|\bra{D}^{-\frac1{\widetilde{q}}+\frac1{\widetilde{p}}}
       F\|_{L_t^{\widetilde{p}'}
        L_x^{\widetilde{q}'}}
  \end{equation}
  for all $F(t,x)$ radial in the space variable, and all
  couples $(p,q)$ and $(\widetilde{p},\widetilde{q})$
  satisfying either \eqref{eq:extadmWE} or \eqref{eq:admpqS}.
\end{corollary}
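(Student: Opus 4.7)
The plan is to derive the nonhomogeneous estimates \eqref{eq:strnhWE} and \eqref{eq:strnhKG} from the homogeneous Strichartz estimates \eqref{eq:striWE} and \eqref{eq:striKG} of Theorem \ref{the:strichartz} by a standard combination of the $TT^{*}$ argument with the Christ--Kiselev lemma. Since we are in the non-endpoint regime $p,\widetilde p>2$, both devices apply without obstruction.

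First I would treat the wave case. Using the equivalence \eqref{eq:equivnorms}, the homogeneous estimate \eqref{eq:striWE} becomes
\begin{equation*}
  \bigl\||D|^{\frac1q-\frac1p}e^{it\sqrt H}H^{-\frac14}g\bigr\|_{L^{p}_{t}L^{q}_{x}}
  \lesssim\|g\|_{L^{2}},
\end{equation*}
that is, the operator $A_{(p,q)}g:=|D|^{\frac1q-\frac1p}e^{it\sqrt H}H^{-\frac14}g$ is bounded from $L^{2}_{x}$ into $L^{p}_{t}L^{q}_{x}$ (on radial functions) for every $(p,q)$ satisfying \eqref{eq:admpq}. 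By duality its adjoint
\begin{equation*}
  A_{(p,q)}^{*}G
  =H^{-\frac14}\int_{\mathbb{R}}e^{-is\sqrt H}|D|^{\frac1q-\frac1p}G(s,\cdot)\,ds
\end{equation*}
satisfies $\|A_{(p,q)}^{*}G\|_{L^{2}}\lesssim\|G\|_{L^{p'}_{t}L^{q'}_{x}}$. Composing with $A_{(\widetilde p,\widetilde q)}$ and using $H^{-\frac14}\cdot H^{-\frac14}=H^{-\frac12}=1/\sqrt H$, the operator $A_{(p,q)}A_{(\widetilde p,\widetilde q)}^{*}$ takes the form
\begin{equation*}
  G\longmapsto |D|^{\frac1q-\frac1p}\int_{\mathbb{R}}\frac{e^{i(t-s)\sqrt H}}{\sqrt H}\,|D|^{\frac1{\widetilde q}-\frac1{\widetilde p}}G(s,\cdot)\,ds,
\end{equation*}
and $TT^{*}$ gives the bound of this operator from $L^{\widetilde p'}_{t}L^{\widetilde q'}_{x}$ into $L^{p}_{t}L^{q}_{x}$. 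Substituting $F=|D|^{\frac1{\widetilde q}-\frac1{\widetilde p}}G$ yields the untruncated version of \eqref{eq:strnhWE}.

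The passage from the integral $\int_{\mathbb{R}}$ to the causal integral $\int_{0}^{t}$ is handled by the Christ--Kiselev lemma; this requires $p>\widetilde p'$, which is guaranteed by $p,\widetilde p>2$ (so that $\widetilde p'<2<p$) in the admissible range \eqref{eq:admpq}. The Klein--Gordon case \eqref{eq:strnhKG} follows by the same argument, replacing $\sqrt H$ with $\sqrt{\nu+H}$, $|D|$ with $\bra D$, the homogeneous norm equivalence \eqref{eq:equivnorms} with the inhomogeneous one \eqref{eq:equivnorms2}, and starting from \eqref{eq:striKG} instead of \eqref{eq:striWE}; the admissible range now includes both wave-admissible pairs \eqref{eq:admpq} and Schrödinger-admissible pairs \eqref{eq:admpqS}, reflecting the wider range in which \eqref{eq:striKG} is known.

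No step presents a genuine difficulty: the $TT^{*}$ composition is algebraic once the norm equivalences are in place, and the Christ--Kiselev truncation step is classical in the non-endpoint setting. The only point to verify carefully is that $A_{(p,q)}$ indeed maps radial functions to radial functions (so that the composition $A_{(p,q)}A_{(\widetilde p,\widetilde q)}^{*}$ stays inside the class where the hypotheses of Theorem \ref{the:strichartz} apply), which is automatic since $H=-\Delta+W(|x|)$ and $|D|$ commute with rotations. This is why I would expect the proof to be essentially one or two lines in the final paper.
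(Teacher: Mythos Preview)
Your proposal is correct and follows exactly the approach indicated in the paper, which simply states that the corollary follows ``in a standard way'' from the homogeneous estimates via the Christ--Kiselev lemma and a $TT^{*}$ argument in the non-endpoint case. Your write-up actually supplies more detail than the paper does, including the use of the norm equivalences \eqref{eq:equivnorms}, \eqref{eq:equivnorms2} to factor through $H^{-1/4}$ (resp.\ $(\nu+H)^{-1/4}$), the explicit form of the adjoint, and the verification that $p>\widetilde p'$ holds because $p,\widetilde p>2$.
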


\begin{remark}[]\label{rem:KGadmiss}
  In the Klein-Gordon case, by interpolation one can obtain
  a wider range of admissible couples $(p,q)$.
  We omit the details since in
  the following we shall only need the wave admissible
  case.
\end{remark}

\begin{remark}[]\label{rem:sobstricest}
  Recall that we can write
  the fractional Sobolev embedding on $\mathbb{R}^{m}$
  in the form
  \begin{equation*}
    \||D|^{\frac mr}v\|_{L^{r}}\lesssim
    \||D|^{\frac mq}v\|_{L^{q}},
    \qquad
    1<q\le r<\infty
  \end{equation*}
  which includes essentially all cases with the exception
  of some endpoints. On the other hand we can write
  \eqref{eq:striWE} in the equivalent form
  \begin{equation*}
    \||D|^{\frac1p+\frac mq-\frac {m-1}2}
        e^{it \sqrt{H}}f\|_{L^{p}_{t}L^{q}_{x}}
    \lesssim
    \|f\|_{\dot H^{\frac12}(\mathbb{R}^{m})}.
  \end{equation*}
  Nesting the two we obtain immediately the following
  \emph{Sobolev-Strichartz estimates}:
  \begin{equation}\label{eq:sobstriWE}
    \textstyle
    \||D|^{\frac1p+\frac mr-\frac{m-1}{2}}
        e^{it \sqrt{H}}f\|_{L^{p}_{t}L^{r}_{x}}
    \lesssim
    \|f\|_{\dot H^{\frac12}(\mathbb{R}^{m})}
  \end{equation}
  provided the couple $(p,r)$ satisfies
  \begin{equation}\label{eq:extadmWE}
    0<\frac1r\le \frac12-\frac{2}{m-1}\frac1p,
    \qquad
    2<p\le\infty.
  \end{equation}
  A similar extension of the range holds then also for the
  nonhomogeneous estimate \eqref{eq:strnhWE}, where
  we can replace $(p,q)$ and $(\widetilde{p},\widetilde{q})$
  with couples $(p,r)$ and $(\widetilde{p},\widetilde{r})$
  satisfying the extended condition \eqref{eq:extadmWE}.

  Similar extensions hold also for the Klein-Gordon
  equation. In the following, we shall only use
  the case of wave-type estimates. Recall that
  \begin{equation*}
    \|v\|_{L^{p}}\lesssim \|\bra{D}^{\frac mq}v\|_{L^{q}}
    \quad\text{for all}\quad q\le p<\infty
  \end{equation*}
  and together with the fractional Sobolev embedding
  this implies the inequality
  \begin{equation*}
    \|\bra{D}^{\frac mr}v\|_{L^{r}}\lesssim
    \|\bra{D}^{\frac mq}v\|_{L^{q}},
    \qquad
    1<q\le r<\infty.
  \end{equation*}
  Thus we can proceed exactly as for \eqref{eq:sobstriWE}
  and we obtain the extended estimates
  \begin{equation}\label{eq:sobstriKG}
    \|\bra{D}^{\frac1p+\frac mr-\frac{m-1}{2}}
        e^{it \sqrt{\nu+H}}f\|_{L^{p}_{t}L^{r}_{x}}
    \lesssim
    \|f\|_{ H^{\frac12}(\mathbb{R}^{m})},
  \end{equation}
  provided the couple $(p,r)$ satisfies
  \eqref{eq:extadmWE}. Note that we can obtain
  Sobolev-Strichartz estimates also in the Schr\"{o}dinger
  admissible range of indices, but we shall not need this.

  In a similar way, in estimate \eqref{eq:strnhKG} we can replace
  $(p,q)$ and $(\widetilde{p},\widetilde{q})$
  with any couples $(p,r)$ and $(\widetilde{p},\widetilde{r})$
  satisfying the extended condition \eqref{eq:extadmWE}.

 \end{remark}

\section{The fixed point argument}\label{sec:fixedpoint}

We begin by recalling some basic nonlinear estimates
for later use. The first one is a well-known H\"{o}lder
inequality for fractional derivatives:

\begin{lemma}[Kato-Ponce]\label{lem:katoponce}
  For any test functions $u,v$, any $s\ge0$ and $1<p<\infty$ one has
  \begin{equation}\label{eq:poncekato}
    \||D|^{s}(uv)\|_{L^{p}}\lesssim
    \||D|^{s}u\|_{L^{p_{1}}}\|v\|_{L^{p_{2}}}+
    \|u\|_{L^{p_{3}}}\||D|^{s}v\|_{L^{p_{4}}}
  \end{equation}
  and
  \begin{equation}\label{eq:poncekato2}
    \|\bra{D}^{s}(uv)\|_{L^{p}}\lesssim
    \|\bra{D}^{s}u\|_{L^{p_{1}}}\|v\|_{L^{p_{2}}}+
    \|u\|_{L^{p_{3}}}\|\bra{D}^{s}v\|_{L^{p_{4}}}
  \end{equation}
  provided $p_{1},p_{2},p_{3},p_{4}\in]1,\infty]$ satisfy
  $\frac1p=\frac{1}{p_{1}}+\frac{1}{p_{2}}=
         \frac{1}{p_{3}}+\frac{1}{p_{4}}$
\end{lemma}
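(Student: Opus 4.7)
The plan is to prove \eqref{eq:poncekato} by a standard Littlewood-Paley paraproduct decomposition, then deduce \eqref{eq:poncekato2} from it together with the elementary observation that $\bra{D}^{s}$ behaves like $|D|^{s}$ for high frequencies and is an $L^{p}\to L^{p}$ Fourier multiplier on low frequencies. Since this is the classical Kato-Ponce fractional Leibniz rule, I would state the result and invoke the original references; for completeness I would sketch the argument as follows.

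First I would fix a dyadic partition of unity $1=\sum_{j\in\mathbb{Z}}\varphi_{j}(\xi)$ with $\varphi_{j}$ supported in $\{|\xi|\sim 2^{j}\}$, let $P_{j}$ be the corresponding projection, and set $S_{j}=\sum_{k<j}P_{k}$. Using Bony's paraproduct I would write
\begin{equation*}
  uv = \pi_{u}(v)+\pi_{v}(u)+R(u,v),
  \qquad
  \pi_{u}(v):=\sum_{j}(S_{j-2}u)(P_{j}v),
\end{equation*}
with $R(u,v)=\sum_{|j-k|\le 2}(P_{j}u)(P_{k}v)$ the diagonal remainder. Applying $|D|^{s}$ to the first paraproduct, each summand $|D|^{s}[(S_{j-2}u)P_{j}v]$ is frequency-localized at scale $2^{j}$, so by Mikhlin's multiplier theorem $\|\,|D|^{s}[(S_{j-2}u)P_{j}v]\,\|_{L^{p}}\lesssim 2^{js}\|(S_{j-2}u)P_{j}v\|_{L^{p}}$; Hölder with exponents $(p_{3},p_{4})$ and the vector-valued Fefferman-Stein/square function estimate then gives
\begin{equation*}
  \|\,|D|^{s}\pi_{u}(v)\,\|_{L^{p}}
  \lesssim
  \|u\|_{L^{p_{3}}}\,\bigl\|\bigl(\textstyle\sum_{j}|2^{js}P_{j}v|^{2}\bigr)^{1/2}\bigr\|_{L^{p_{4}}}
  \simeq
  \|u\|_{L^{p_{3}}}\||D|^{s}v\|_{L^{p_{4}}}.
\end{equation*}
The symmetric term $\pi_{v}(u)$ is handled in the same way and produces the first summand on the right-hand side of \eqref{eq:poncekato}.

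The main obstacle is the high-high remainder $R(u,v)$, because its summands are no longer localized at a single dyadic scale and one cannot simply pull out $2^{js}$. The standard trick is to insert a further low-frequency projection: since $|D|^{s}(P_{j}u\cdot P_{k}v)$ with $|j-k|\le 2$ has frequencies $\lesssim 2^{j}$, one writes $|D|^{s}R(u,v)=\sum_{j}P_{\le j+5}|D|^{s}\sum_{|j-k|\le 2}(P_{j}u)(P_{k}v)$ and estimates by $\sum_{j}2^{js}\|P_{j}u\cdot P_{j}v\|_{L^{p}}$, controlling this by duality and Cauchy-Schwarz against $\|u\|_{L^{p_{1}}}\||D|^{s}v\|_{L^{p_{4}}}$ or $\||D|^{s}u\|_{L^{p_{1}}}\|v\|_{L^{p_{2}}}$ depending on which of the two factors carries the derivative (one uses $s\ge 0$ here to control the negative-index factor by a maximal function). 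Summing the three contributions yields \eqref{eq:poncekato}. Finally, \eqref{eq:poncekato2} is obtained by splitting $\bra{D}^{s}=\chi(D)\bra{D}^{s}+(1-\chi(D))\bra{D}^{s}$ with $\chi$ a smooth cutoff near $0$: on the high-frequency part $\bra{D}^{s}$ is equivalent to $|D|^{s}$ as a Fourier multiplier so \eqref{eq:poncekato} applies, while on the low-frequency part $\chi(D)\bra{D}^{s}$ is bounded on every $L^{p}$ and Hölder suffices.
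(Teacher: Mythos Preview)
The paper does not prove this lemma at all: it is introduced with the words ``a well-known H\"older inequality for fractional derivatives'' and stated without proof, then immediately used. Your sketch via Bony's paraproduct decomposition is the standard argument and is essentially correct, so you are providing strictly more than the paper does. One small remark: in your treatment of the high-high remainder you mix the index pairs (writing $\|u\|_{L^{p_{1}}}\||D|^{s}v\|_{L^{p_{4}}}$); the diagonal piece should be bounded by either $\||D|^{s}u\|_{L^{p_{1}}}\|v\|_{L^{p_{2}}}$ or $\|u\|_{L^{p_{3}}}\||D|^{s}v\|_{L^{p_{4}}}$, consistent with the two H\"older splittings.
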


In particular, this gives
\begin{equation*}
  \|u^{3}\|_{\dot H^{s}_{p}}\lesssim
  \|u\|_{\dot H^{s}_{r}}\|u\|_{L^{q}}^{2}
\end{equation*}
provided $p^{-1}=r^{-1}+2q^{-1}$.

The second Lemma is a standard fractional Moser type inequality:

\begin{lemma}\label{lem:coif}
  Assume $F(r)$ is in $C^{N}(\mathbb{R})$, $N\ge1$ integer,
  and let $0<s<N$, $1<p<\infty$. If
  $F(0)=0$, then there exists
  a function $\phi(r)$ such that
  for any test functions
  $u,v$ one has
  \begin{equation}\label{eq:compos}
    \|F(u)\|_{ H^{s}_{p}}\le
    \phi(\|u\|_{L^{\infty}})\|u\|_{ H^{s}_{p}},
  \end{equation}
  \begin{equation}\label{eq:compos2}
    \|F(u)-F(v)\|_{ H^{s}_{p}}\le
    \phi(R)\left[\|u-v\|_{ H^{s}_{p}}+\|u-v\|_{ L^{\infty}}\right],
  \end{equation}
  where $R=\|u\|_{L^{\infty}}+\|v\|_{L^{\infty}}
      +\|u\|_{H^{s}_{p}}+\|v\|_{H^{s}_{p}}$.
\end{lemma}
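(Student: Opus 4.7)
The plan is to prove the two estimates in succession: first the composition bound (\ref{eq:compos}) via a Littlewood-Paley paraproduct argument, then the Lipschitz-type bound (\ref{eq:compos2}) by combining (\ref{eq:compos}) with the fundamental theorem of calculus and the Kato-Ponce inequality of Lemma \ref{lem:katoponce}. This is a classical Moser composition estimate, and only a sketch is needed.

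For (\ref{eq:compos}), fix an inhomogeneous Littlewood-Paley decomposition $\{\Delta_j\}_{j\ge -1}$ with partial sums $S_j=\sum_{k<j}\Delta_k$, normalized so that $S_{-1}u\equiv 0$. Because $F(0)=0$, telescoping yields
\begin{equation*}
  F(u) \;=\; \sum_{j\ge -1}\bigl(F(S_{j+1}u)-F(S_j u)\bigr)
  \;=\; \sum_{j\ge -1}\Delta_j u\cdot m_j(u),\qquad
  m_j(u):=\int_0^1 F'\bigl(S_j u+t\Delta_j u\bigr)\,dt.
\end{equation*}
Since $\|S_j u + t\Delta_j u\|_{L^\infty}\lesssim \|u\|_{L^\infty}$ uniformly in $j,t$, the factors $m_j(u)$ are uniformly bounded in $L^\infty$ by $\sup_{|r|\le C\|u\|_{L^\infty}}|F'(r)|$; by Fa\`a di Bruno's formula their Littlewood-Paley blocks satisfy frequency-localized bounds in terms of $\|u\|_{L^\infty}$ and $\|F\|_{C^N}$ on $[-\|u\|_\infty,\|u\|_\infty]$. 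Decomposing each product $\Delta_j u\cdot m_j(u)$ via Bony's paraproduct (low-high, high-low, high-high), summing in $j$, and invoking the Littlewood-Paley characterization of $H^s_p$ together with the Fefferman-Stein maximal inequality, yields (\ref{eq:compos}) with $\phi$ depending only on $\|F\|_{C^N([-\|u\|_\infty,\|u\|_\infty])}$.

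For (\ref{eq:compos2}), write
\begin{equation*}
  F(u)-F(v)\;=\;(u-v)\int_0^1 F'\bigl(v+\tau(u-v)\bigr)\,d\tau
  \;=\;(u-v)\bigl(F'(0)+\widetilde G(u,v)\bigr),
\end{equation*}
with $\widetilde G(u,v):=\int_0^1 \widetilde F\bigl(v+\tau(u-v)\bigr)\,d\tau$ and $\widetilde F:=F'-F'(0)$ vanishing at $0$. The constant piece contributes $|F'(0)|\,\|u-v\|_{H^s_p}$. For the remaining piece, apply Kato-Ponce (\ref{eq:poncekato2}) with $p_2=p_3=\infty$:
\begin{equation*}
  \|(u-v)\widetilde G(u,v)\|_{H^s_p}
  \;\lesssim\;
  \|u-v\|_{H^s_p}\,\|\widetilde G(u,v)\|_{L^\infty}
  +
  \|u-v\|_{L^\infty}\,\|\widetilde G(u,v)\|_{H^s_p}.
\end{equation*}
The $L^\infty$ factor is bounded by $\sup_{|r|\le R}|F'(r)|$. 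For the $H^s_p$ factor, apply (\ref{eq:compos}) to $\widetilde F$ (and integrate in $\tau$, using that $\|v+\tau(u-v)\|_{L^\infty}+\|v+\tau(u-v)\|_{H^s_p}\le R$) to get $\|\widetilde G(u,v)\|_{H^s_p}\le\phi(R)\,R$. Assembling the pieces produces (\ref{eq:compos2}) with an updated continuous $\phi$.

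The only genuine technical obstacle is the paraproduct analysis behind (\ref{eq:compos}): the symbols $m_j(u)$ are \emph{not} frequency localized at scale $2^j$, so one must carefully split $\Delta_j u\cdot m_j(u)$ into low-high, high-low and high-high pieces and control the Littlewood-Paley blocks of $m_j(u)$ through Fa\`a di Bruno together with the $C^N$ regularity of $F$. This material is classical (see e.g.\ Taylor's \emph{Tools for PDE} or Bahouri-Chemin-Danchin), so we omit the routine but tedious verification.
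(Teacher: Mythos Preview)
The paper does not prove Lemma~\ref{lem:coif}: it is simply stated as ``a standard fractional Moser type inequality'' and then used. So there is no argument in the text to compare yours against, and your outline for \eqref{eq:compos} via Meyer's telescoping $F(u)=\sum_j\bigl(F(S_{j+1}u)-F(S_ju)\bigr)$ and a Bony paraproduct analysis of $\Delta_j u\cdot m_j(u)$ is exactly the classical route found in the references you cite.

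Your derivation of \eqref{eq:compos2}, however, has a real gap. You invoke \eqref{eq:compos} for $\widetilde F=F'-F'(0)$, but $\widetilde F\in C^{N-1}$ only, so the hypothesis $0<s<N$ of \eqref{eq:compos} becomes $0<s<N-1$ when applied to $\widetilde F$. Thus your argument proves \eqref{eq:compos2} only for $s<N-1$, not for the full range $s<N$ claimed. This is not academic: in the paper's application (proof of Theorem~\ref{the:keyWE}) one takes $s=\tfrac{n-1}{2}$ and $Z\in C^{N}$ with $N=[\tfrac{n-1}{2}]+1$, so that $N-1\le s<N$ --- precisely the range your reduction misses. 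To cover it you must either run the telescoping/paraproduct argument directly on $F(u)-F(v)$ (writing each increment as $m_j(u)\Delta_j(u-v)+\bigl(m_j(u)-m_j(v)\bigr)\Delta_j v$ and handling the second piece without invoking \eqref{eq:compos} for $F'$), or simply assume one more derivative on $F$, which in the paper's setting amounts to a harmless strengthening of the hypothesis on $Z$.
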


The third Lemma is a Strauss type inequality, i.e., an
improved weighted Sobolev embedding for radial
functions. For a proof and a comprehensive treatment
of such inequalities, we refer to \cite{DAnconaLuca12-a}.

\begin{lemma}\label{lem:radial}
  Let $m\ge2$,
  $1< p\le q< \infty$ and $\frac 1p-\frac 1q\le s<\frac mp$.
  For any radial function
  $u(x)$ on $\mathbb{R}^{m}$ one has
  \begin{equation}\label{eq:strauss}
    \||x|^{\frac mp-\frac mq-s}u\|_{L^{q}}\lesssim
    \|u\|_{\dot H^{s}_{p}}.
  \end{equation}
  If the condition on $s$ is restricted to
  $\frac 1p-\frac 1q< s<\frac mp$, then
  the previous estimate holds for all $1\le p\le q\le \infty$.
 \end{lemma}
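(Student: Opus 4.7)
The plan is as follows. First, observe that the inequality is scale-invariant under $u(x)\mapsto u(\lambda x)$ by construction of the weight exponent $\alpha := \tfrac{m}{p}-\tfrac{m}{q}-s$, so by density we may restrict to Schwartz radial functions. The strategy is to reduce to a single-frequency estimate via Littlewood-Paley and then exploit a radial-specific pointwise decay bound on each dyadic piece.

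Write $u = \sum_{j\in\mathbb{Z}} u_j$ with $u_j := P_j u$ frequency-localized in $|\xi|\sim 2^j$, and recall the characterization $\|u\|_{\dot H^s_p}\simeq \|(\sum_j 2^{2js}|u_j|^2)^{1/2}\|_{L^p}$. The heart of the argument is the single-frequency bound
\[
  \||x|^{\alpha} u_j\|_{L^q(\mathbb{R}^m)} \;\lesssim\; 2^{js}\|u_j\|_{L^p},
\]
with constant independent of $j$. To prove it, I split the spatial integral at $|x|\sim 2^{-j}$. On the inner region $|x|\lesssim 2^{-j}$ the standard Bernstein inequality $\|u_j\|_{L^\infty}\lesssim 2^{jm/p}\|u_j\|_{L^p}$ suffices, and a direct integration yields the target provided $s<m/p$ (needed for integrability near $0$). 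On the outer region I would use a radial Bernstein-Strauss bound of the shape $|u_j(x)|\lesssim 2^{jm/p}(2^j|x|)^{-(m-1)/p'}\|u_j\|_{L^p}$, derived from the Fourier-Bessel representation
\[
  u_j(x)=c_m|x|^{1-m/2}\int_0^\infty \widehat{u_j}(\rho)\,J_{m/2-1}(|x|\rho)\,\rho^{m/2}\,d\rho,
\]
the asymptotic $J_\nu(t)=O(t^{-1/2})$ for $t\gg 1$, and Hölder in $\rho$ on the frequency annulus $\rho\sim 2^j$ together with Hausdorff-Young $\|\widehat{u_j}\|_{L^{p'}}\lesssim\|u_j\|_{L^p}$. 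Integration of $|x|^{\alpha q}$ against this pointwise bound produces precisely $2^{jqs}\|u_j\|_{L^p}^q$; convergence at infinity reduces to the threshold $s\ge \tfrac{1}{p}-\tfrac{1}{q}$, with strict inequality needed exactly when $p=1$ or $q=\infty$, which is the reason for the stronger hypothesis in the second assertion.

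Finally, I would take $L^q$ norms of $u=\sum_j u_j$, apply Minkowski in $\ell^2$ together with the Littlewood-Paley square-function characterization of $\dot H^s_p$ (valid since $q\ge p\ge 1$), and sum the resulting geometric series; the $2^{js}$ factors on the left are absorbed cleanly by the $2^{2js}$ weights in the square function on the right.

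The main obstacle is the radial pointwise bound with the sharp $|x|^{-(m-1)/p'}$ decay factor: a naive Bernstein argument loses this gain, and getting the correct exponent requires a careful Fourier-Bessel analysis balancing the $t^{-1/2}$ oscillation of $J_\nu$ against Hölder on a frequency annulus of width $2^j$. Once that lemma is in place, the dyadic reassembly is routine, and the admissibility range $\tfrac{1}{p}-\tfrac{1}{q}\le s<\tfrac{m}{p}$ falls out as the set of parameters making both spatial integrals convergent.
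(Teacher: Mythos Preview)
The paper does not prove this lemma; it cites \cite{DAnconaLuca12-a} for a full treatment. So there is no in-paper argument to compare against, and I assess your sketch on its own merits.

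There are genuine gaps. The pointwise bound $|u_j(x)|\lesssim 2^{jm/p}(2^j|x|)^{-(m-1)/p'}\|u_j\|_{L^p}$ does not follow from the argument you describe: the asymptotic $J_\nu(t)=O(t^{-1/2})$ gives the fixed spatial decay $|x|^{-(m-1)/2}$ regardless of $p$, and H\"older plus Hausdorff--Young (which already forces $1\le p\le 2$) only affects the $2^j$ prefactor, not the $|x|$ exponent. For $p>2$ the stronger decay $(m-1)/p'>(m-1)/2$ you claim is in fact false, as the surface-measure example $\widehat u=\sigma_{\{|\xi|=1\}}$ shows (a radial $L^\infty$ function, frequency-localized near $|\xi|=1$, decaying exactly like $|x|^{-(m-1)/2}$). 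Moreover, even with the correct $(m-1)/2$ decay, convergence of the outer integral requires $s>m/p-(m-1)/2$, which does \emph{not} reduce to the threshold $s\ge 1/p-1/q$: for $p=q=2$ your method would need $s>1/2$, whereas the lemma covers all $0\le s<m/2$. The missing range is the Hardy/Stein--Weiss regime where the weight exponent $\alpha\le 0$, and it must be handled by a different argument.

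The reassembly step is also unjustified. The scale-invariant single-frequency bound has no slack, so the triangle inequality yields an $\ell^1$ sum $\sum_j 2^{js}\|u_j\|_{L^p}$ that is not controlled by the $\ell^2$-type norm $\|u\|_{\dot H^s_p}$. The Minkowski/square-function route you gesture at works cleanly only when $p\le 2\le q$, and even then requires weighted Littlewood--Paley theory (checking $|x|^{\alpha q}\in A_q$) on the left-hand side before one can pass from $|x|^\alpha u$ to $|x|^\alpha(\sum_j|u_j|^2)^{1/2}$; outside that range the summation needs a genuinely different mechanism, such as interpolation between endpoint cases or an $\epsilon$ of room obtained by perturbing the parameters.
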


 The following consequence of Lemma \ref{lem:radial}
 will be a crucial ingredient in the proof of the main result:


\begin{lemma}[]\label{lem:radial33}
  Let $m\ge2$, $1< p\le q< \infty$, $\sigma\ge0$ and
  $\frac1p-\frac1q\le s<\frac mp-\sigma$. Assume the function
  $\gamma(r)\in C^{[\sigma]+1}(]0,+\infty[,\mathbb{R})$
  satisfies for $r>0$
  \begin{equation}\label{eq:decaygamma}
    |\gamma^{(j)}(r)|\lesssim r^{\frac mp-\frac mq-s-j},
    \qquad
    j=0,\dots,[\sigma]+1.
  \end{equation}
  Then for any radial function $u(x)$ on $\mathbb{R}^{m}$
  one has the estimate
  \begin{equation}\label{eq:radialgamma}
    \|\gamma(|x|)u\|_{\dot H^{\sigma}_{q}}\lesssim
    \|u\|_{\dot H^{s+\sigma}_{p}}.
  \end{equation}
\end{lemma}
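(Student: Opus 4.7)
The strategy is to reduce to Lemma \ref{lem:radial} by distributing derivatives through a Leibniz expansion, handling the integer-order case directly and passing to fractional $\sigma$ via complex interpolation. The key pointwise ingredient is that iterating the chain rule for the radial function $\gamma(|x|)$ and invoking the hypothesis on $\gamma^{(k)}$ gives
$$|\partial^{\beta}\gamma(|x|)|\lesssim\sum_{k=1}^{|\beta|}|\gamma^{(k)}(|x|)|\,|x|^{k-|\beta|}\lesssim|x|^{\frac mp-\frac mq-s-|\beta|}$$
for every multi-index $|\beta|\le[\sigma]+1$.

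For $\sigma=0$ the bound $\|\gamma u\|_{L^{q}}\lesssim\|u\|_{\dot H^{s}_{p}}$ is immediate from the pointwise bound at $|\beta|=0$ and Lemma \ref{lem:radial}, whose admissibility $\frac1p-\frac1q\le s<\frac mp$ is part of the assumption. For a positive integer $\sigma=N$, I would use $\|v\|_{\dot H^{N}_{q}}\simeq\sum_{|\alpha|=N}\|\partial^{\alpha}v\|_{L^{q}}$ together with Leibniz to reduce to estimating $\|\partial^{\beta}\gamma\,\partial^{\delta}u\|_{L^{q}}$ with $|\beta|+|\delta|=N$; Lemma \ref{lem:radial} applied to the radial function $\partial^{\delta}u$ at the shifted parameter $s'=s+|\beta|$ (which lies in $[\tfrac1p-\tfrac1q,\tfrac mp)$ because $s'+|\delta|=s+N<\tfrac mp$) then yields $\|\partial^{\beta}\gamma\,\partial^{\delta}u\|_{L^{q}}\lesssim\|u\|_{\dot H^{s+N}_{p}}$, and summing over the Leibniz terms closes this case.

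For non-integer $\sigma$ I set $N=[\sigma]$ and $\tau=\sigma-N\in(0,1)$. Using $\|v\|_{\dot H^{\sigma}_{q}}\simeq\sum_{|\alpha|=N}\|\partial^{\alpha}v\|_{\dot H^{\tau}_{q}}$ and the same Leibniz expansion on $\partial^{\alpha}(\gamma u)$, the problem reduces to the fractional core: $\sigma\in(0,1)$, a shifted exponent $s'$ still satisfying $s'+\sigma<\tfrac mp$ and $s'\ge\tfrac1p-\tfrac1q$, with two pointwise-controlled derivatives of $\gamma$. For this core I would apply Stein's complex interpolation to the analytic family $T_{z}u:=|D|^{z}(\gamma u)$, using as endpoints $\Re z=0$ (the $\sigma=0$ case already handled) and $\Re z=1$ (the $\sigma=1$ case, valid when $s'+1<\tfrac mp$; when $s'+1\ge\tfrac mp$ one first performs a Sobolev embedding on the target side to trade integrability for regularity and restore admissibility of the upper endpoint). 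The count $[\sigma]+1$ of derivatives on $\gamma$ assumed in the hypothesis is precisely what is needed to guarantee the validity of the endpoint estimates uniformly along the interpolation strip.

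The main obstacle is this last step near the critical threshold $\sigma\approx\tfrac mp-s$, where the natural integer endpoint $\sigma=[\sigma]+1$ falls outside the Strauss admissible range of Lemma \ref{lem:radial}; the analytic family (or equivalently the target-side adjustment) must be chosen delicately so as to exploit the full regularity allowed by the hypothesis without ever violating the constraint $s+\sigma<\tfrac mp$.
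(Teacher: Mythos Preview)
Your integer-$\sigma$ argument coincides with the paper's. The gap is in the fractional step, and it is precisely the obstacle you name in your last paragraph: interpolating in the $\sigma$-variable with the family $T_{z}u=|D|^{z}(\gamma u)$ requires both integer endpoints to hold at the \emph{given} $s$, and near the threshold $s+\sigma\approx m/p$ the upper endpoint $z=[\sigma]+1$ fails. Your Sobolev fix does give an alternative estimate (embed $\dot H^{\sigma}_{q}\hookleftarrow\dot H^{[\sigma]+1}_{r}$ and apply the integer case at $(p,r)$), but a computation shows it works only when $s\ge\tfrac1p-\tfrac1q+(1-\{\sigma\})\tfrac{m-1}{m}$; the direct interpolation works only when $s<\tfrac mp-[\sigma]-1$. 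These two ranges need not overlap (take $m=2$, $p=q=2$, $\sigma$ small), so neither route by itself closes the lemma in full generality, and there is no evident third option within your scheme.

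The paper resolves this by making the \emph{weight exponent} the analytic parameter rather than $\sigma$. One factors $\gamma(r)=\rho(r)\,r^{m/p-m/q-s}$ with $|\rho^{(j)}|\lesssim r^{-j}$, and considers
\[
  T_{z}v:=|D|^{\sigma}\bigl(\rho(|x|)\,|x|^{m/p-m/q-z}\,|D|^{-z-\sigma}v\bigr),
\]
which is analytic in $z$ because $|x|^{-z}$ is. The two arguments above (integer interpolation in $\sigma$; Sobolev-then-integer) now yield $L^{p}\to L^{q}$ boundedness of $T_{z}$ on two $\Re z$-intervals whose union straddles the desired range, and a final Stein interpolation \emph{in $z$} fills any gap. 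Your family $|D|^{z}(\gamma\,\cdot)$ lacks this degree of freedom: with $\gamma$ fixed, $s$ is fixed, and there is no analytic parameter left to bridge the two ranges.

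A smaller point: in your Leibniz reduction, $\partial^{\delta}u$ is not radial, so Lemma~\ref{lem:radial} does not apply to it directly; one should first invoke the pointwise equivalence $\sum_{|\alpha|=N}|\partial^{\alpha}u|\simeq|\partial_{r}^{N}u|$ for radial $u$ (cf.\ the proof of Lemma~\ref{lem:dimchange}) to pass to the radial function $\partial_{r}^{|\delta|}u$.
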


\begin{proof}
  Write $\gamma(r)=\rho(r)|x|^{\frac mp-\frac mq-s}$
  so that
  \begin{equation*}
    \rho(r):=\gamma(r)|x|^{-\frac mp+\frac mq+s}
    \quad\implies\quad
    |\rho^{(j)}(r)|\lesssim r^{-j},
    \qquad
    j=0,\dots,[\sigma]+1.
  \end{equation*}
  We shall prove the estimate
  \begin{equation}\label{eq:theclaim}
    \||D|^{\sigma}(\rho(r)|x|^{\frac mp-\frac mq-z}u)\|_{L^{q}}
    \lesssim
    \||D|^{\sigma+z}u\|_{L^{p}}
  \end{equation}
  for all $1<p\le q<\infty$, $\sigma\ge0$ and all $z\in \mathbb{C}$
  in the complex strip
  $\frac1p-\frac1q\le \Re z<\frac mp-\sigma$. Note that
  the right hand side is equivalent to the
  $\dot H^{\Re z+\sigma}_{q}$ norm, by the well known property
  (see e.g.~ \cite{SikoraWright01-a} or
  \cite{CacciafestaDAncona12-a})
  \begin{equation*}
    \||D|^{iy}v\|_{L^{q}}\simeq \|v\|_{L^{q}},
    \qquad
    1<q<\infty.
  \end{equation*}
  When $\sigma$ is a nonnegative integer,
  the claim is proved directly writing
  \begin{equation*}
    \textstyle
    \||D|^{\sigma}(\rho(r)|x|^{\frac mp-\frac mq-z}u)\|_{L^{q}}
    \simeq
    \sum_{|\alpha|=\sigma}
    \|\partial^{\alpha}
      (\rho(r)|x|^{\frac mp-\frac mq-z}u)\|_{L^{q}},
  \end{equation*}
  expanding the derivatives by the chain rule,
  and applying to each term estimate \eqref{eq:strauss}.

  Consider now the case of a real $\sigma>0$. By complex
  interpolation between the integer cases
  \begin{equation*}
    \textstyle
    \||D|^{[\sigma]}(\rho(r)|x|^{\frac mp-\frac mq-z}u)\|_{L^{q}}
    \lesssim
    \|D^{s+[\sigma]}u\|_{L^{p}},
    \qquad
    \frac1p-\frac1q\le \Re z<\frac mp-[\sigma]
  \end{equation*}
  and
  \begin{equation*}
    \textstyle
    \||D|^{[\sigma]+1}(\rho(r)|x|^{\frac mp-\frac mq-z}u)\|_{L^{q}}
    \lesssim
    \|D^{s+[\sigma]+1}u\|_{L^{p}},
    \qquad
    \frac1p-\frac1q\le \Re z<\frac mp-[\sigma]-1
  \end{equation*}
  we obtain that \eqref{eq:theclaim} is true
  provided
  \begin{equation}\label{eq:firstrange}
    \textstyle
    \frac1p-\frac1q\le \Re z<\frac mp-[\sigma]-1.
  \end{equation}
  On the other hand, if we first use Sobolev embedding
  \begin{equation*}
    \textstyle
    \||D|^{\sigma}(\rho(r)|x|^{\frac mp-\frac mq-z}u)\|_{L^{q}}
    \lesssim
    \||D|^{[\sigma]+1}(\rho(r)|x|^{\frac mp-\frac mq-z}u)\|_{L^{r}},
    \qquad
    [\sigma]+1-\frac mr=\sigma-\frac mq
  \end{equation*}
  and then apply again the inequality for the integer case,
  we obtain that \eqref{eq:theclaim} is true for $\Re z$ in the
  range
  \begin{equation*}
    \textstyle
    \frac1p-\frac1r\le
    \Re z+\sigma-([\sigma]+1)<\frac mp-([\sigma]+1)
  \end{equation*}
  which is equivalent to
  \begin{equation}\label{eq:secondrange}
    \textstyle
    \frac1p-\frac1q+(1-\{\sigma\})\frac{m-1}{m}\le
    \Re z < \frac mp-\sigma,
    \qquad
    \{\sigma\}:=\sigma-[\sigma].
  \end{equation}
  Now we define the analytic family of operators
  \begin{equation*}
    T_{z}v:=|D|^{\sigma}(\rho(r)|x|^{\frac mp-\frac mq-z}
        |D|^{-z-\sigma}v)
  \end{equation*}
  and we note that we have proved that $T_{z}:L^{p}\to L^{q}$
  is bounded for $\Re z$ in the range \eqref{eq:firstrange} and
  also in the range \eqref{eq:secondrange}.
  By Stein-Weiss interpolation we obtain that $T_{z}$ is
  bounded for all $\Re z$ in the range
  $\frac1p-\frac1q\le\Re z<\frac mp-\sigma$ as claimed, and this
  concludes the proof.
\end{proof}

We are now ready to prove the main result of this section.

Let $n\ge3$ and let $h\in C^{[\frac{n-1}{2}]+2}([0,+\infty))$
with the properties
\begin{equation}\label{eq:prophr}
  h(0)=h''(0)=0,\qquad
  h'(0)=1,\qquad
  h(r)>cr
  \ \text{for some $c>0$}.
\end{equation}
 Define
\begin{equation}\label{eq:htilde2}
   \widetilde{h}(r)=
   \frac{n-1}{2}\left(\frac{h''}{h}
     +\frac{n-3}{2}
     \frac{h'^{2}}{h^{2}}
   \right).
\end{equation}
Assume that
$h_{\infty}:=\lim_{r\to+\infty}\widetilde{h}(r)\ge0$
exists and that, for some $0<\delta_{0}<1$
and $C>0$, the function
\begin{equation}\label{eq:defP2b}
  P(r)=r(\widetilde{h}(r)-h_{\infty})+\frac{1-\delta_{0}}{4r}
\end{equation}
satisfies the conditions
\begin{equation}\label{eq:condP2base}
  0\le rP(r)\le C,\qquad
  P'(r)\le0.
\end{equation}
Finally, let $k\ge1$ and define the potential $V(r)$ as
\begin{equation}\label{eq:defVc}
  V(r)=
  \frac{n-1}{2}
  \left[
    \frac{h''}{h}
    +  \frac{n-3}{2}
    \left(
      \frac{h'^{2}}{h^{2}}-\frac{1}{r^{2}}
    \right)
  \right]
  +k(k+n-2)\left(
    \frac{1}{h^{2}}-\frac{1}{r^{2}}
  \right).
\end{equation}
and assume that
\begin{equation}\label{eq:assderV}
  |V^{(j)}(r)|\lesssim r^{-1}
  \quad\text{for large $r$,}
  \qquad
  \textstyle
  1\le j\le[\frac{n-1}{2}].
\end{equation}
We consider the following Cauchy problem on
$\mathbb{R}_{t}\times \mathbb{R}_{x}^{m}$:
\begin{equation}\label{eq:cauchykey}
  \textstyle
  \psi_{tt}-\Delta \psi+V(|x|)\psi
    =\alpha(|x|)^2Z\bigl(\beta(|x|)\psi\bigr)\cdot\psi^3,
\end{equation}
\begin{equation}\label{eq:cauchykeydata}
  \psi(0,x)=f(x), \qquad \partial_t\psi(0,x)=g(x).
\end{equation}
with spherically symmetric data.

\begin{theorem}[Global well posedness with small data]
  \label{the:keyWE}
  Let $n\ge3$, $k\ge1$ and $m=2k+n$.
  Let $h\in C^{[\frac{n-1}{2}]+3}([0,+\infty))$
  be a real valued function satisfying
  \eqref{eq:prophr}, \eqref{eq:condP2base}
  while the potential $V(r)$ defined by \eqref{eq:defVc}
  satisfies \eqref{eq:assderV}.
  Moreover, let
  $\alpha,\beta\in C^{[\frac{n-1}{2}]+1}([0,+\infty))$
  be such that
  \begin{equation}\label{eq:assalbe}
    \textstyle
    r|\alpha^{(j)}(r)|+|\beta^{(j)}(r)|\lesssim r^{k-j},
    \qquad
    0\le j\le[\frac{n-1}{2}]+1.
  \end{equation}
  Consider the Cauchy problem
  \eqref{eq:cauchykey}, \eqref{eq:cauchykeydata} on
  $\mathbb{R}_{t}\times \mathbb{R}_{x}^{m}$
  with spherically symmetric data
  and $Z\in C^{[\frac{n-1}{2}]+1}(\mathbb{R},\mathbb{R})$.

  In the case $h_{\infty}>0$, if
  $\|f\|_{H^{\frac{n}{2}}}
     +\|g\|_{H^{\frac{n}{2}-1}}$
  is sufficiently small, Problem
  \eqref{eq:cauchykey}, \eqref{eq:cauchykeydata} has a
  unique global solution
  $\psi\in L^{\infty}H^{\frac{n}{2}}
    \cap CH^{\frac{n}{2}}
    \cap L^{p}H^{\frac{n-1}{2}}_{q} $,
  where
  $p=\frac{4(m+1)}{m+3}$, $q=\frac{4m(m+1)}{2m^{2}-m-5}$.

  In the case $h_{\infty}=0$, if
  $\||D|^{\frac12}f\|_{H^{\frac{n-1}{2}}}
     +\||D|^{-\frac12}g\|_{H^{\frac{n-1}{2}}}$
  is sufficiently small, Problem
  \eqref{eq:cauchykey}, \eqref{eq:cauchykeydata} has a unique
  global solution $\psi$ with
  $|D|^{\frac12}\psi\in L^{\infty}H^{\frac{n-1}{2}}
    \cap CH^{\frac{n-1}{2}}$ and
  $\psi\in\cap L^{p}H^{\frac{n-1}{2}}_{q}$,
  with $p,q$ as before.
\end{theorem}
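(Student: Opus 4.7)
The plan is to solve the Cauchy problem by a standard contraction mapping argument based on Duhamel's formula, using the Strichartz estimates of Theorem \ref{the:strichartz} and Corollary \ref{cor:nonhstr} for the operator $H=-\Delta+W(|x|)$, $W=V-h_{\infty}$, together with the nonlinear estimates recalled at the start of the section. Writing $\nu=h_{\infty}$, the problem is equivalent to the fixed point equation
\begin{equation*}
  \psi=\Phi(\psi):=\cos(t\sqrt{\nu+H})f
   +\frac{\sin(t\sqrt{\nu+H})}{\sqrt{\nu+H}}\,g
   -\int_{0}^{t}\frac{\sin((t-s)\sqrt{\nu+H})}{\sqrt{\nu+H}}\,
    N(\psi)(s)\,ds,
\end{equation*}
where $N(\psi)=\alpha(|x|)^{2}Z(\beta(|x|)\psi)\psi^{3}$. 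In the case $h_{\infty}>0$ (Klein--Gordon) I would set up the contraction in the ball of radius $\epsilon$ in
\begin{equation*}
  X:=\{\psi\text{ radial}:\ \|\psi\|_{L^{\infty}H^{n/2}}
       +\|\psi\|_{L^{p}H^{(n-1)/2}_{q}}<\infty\},
\end{equation*}
with $(p,q)$ as in \eqref{eq:pqind}, which is wave-admissible on $\mathbb{R}^{m}$. The case $h_{\infty}=0$ is identical, with $L^{\infty}H^{n/2}$ replaced by the space of $\psi$ such that $|D|^{1/2}\psi\in L^{\infty}H^{(n-1)/2}$ and using the homogeneous versions of the Strichartz and smoothing estimates.

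The linear estimates for $\Phi(0)$ follow at once by combining Theorem \ref{the:strichartz}, Corollary \ref{cor:nonhstr} and the norm equivalences \eqref{eq:equivnorms}--\eqref{eq:equivnorms2} (which let me replace $(\nu+H)^{s/2}$ by $\langle D\rangle^{s}$ on $L^{2}$), together with energy conservation for the linear flow to control the $L^{\infty}H^{s}$ component. For the Duhamel term in $\Phi(\psi)$ I will apply the dual Strichartz inequality of Corollary \ref{cor:nonhstr}, with the target indices $(p,q)$ and suitably chosen dual indices $(\widetilde p,\widetilde q)$ in the extended range \eqref{eq:extadmWE}, reducing matters to an estimate of the form
\begin{equation*}
  \|\langle D\rangle^{(n-1)/2-1/\widetilde q+1/\widetilde p}N(\psi)\|
     _{L_{t}^{\widetilde p'}L_{x}^{\widetilde q'}}
  \lesssim \|\psi\|_{X}^{3}
  \bigl(1+\|\psi\|_{X}\bigr)^{C}.
\end{equation*}
This is the core of the argument and is where the choice \eqref{eq:pqind} is forced.

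To obtain this nonlinear bound, I will first distribute the fractional derivatives across the product $\alpha^{2}Z(\beta\psi)\psi^{3}$ via the Kato--Ponce inequality (Lemma \ref{lem:katoponce}), putting three factors in the Strichartz space $L^{p}H^{(n-1)/2}_{q}$ and the remaining weights $\alpha^{2}$, $Z(\beta\psi)$ (and possibly one derivative of them) in dual Lebesgue/Sobolev spaces. The composition $Z(\beta\psi)$ is handled by Lemma \ref{lem:coif}, together with the $L^{\infty}$-control on $\beta\psi$ provided by radial Sobolev (Lemma \ref{lem:radial}) applied to $\psi\in H^{(n-1)/2}_{q}$ on $\mathbb{R}^{m}$ (note $\beta\sim r^{k}$ is precisely the weight compensating the radial decay). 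The key mechanism for absorbing the polynomial weights $\alpha(r)^{2}\sim r^{2k-2}$ is Lemma \ref{lem:radial33}: choosing $\sigma=(n-1)/2$, $p=q$ as the Strichartz exponent and $s$ matching the scaling deficit between $L^{p'}L^{q'}$ and three copies of $L^{p}H^{(n-1)/2}_{q}$, the weight $\alpha^{2}$ satisfies the pointwise decay hypothesis \eqref{eq:decaygamma} on its derivatives, and the lemma trades $|x|^{2k-2}$ for $2k-2$ Sobolev derivatives on one of the factors. A Hölder count in time (using that $3/p<1$ on $L^{p'}$ with the chosen exponents) closes the estimate.

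Once the bound $\|\Phi(\psi)\|_{X}\le C(\|f\|_{H^{n/2}}+\|g\|_{H^{n/2-1}})+C\epsilon^{3}$ on $\|\psi\|_{X}\le\epsilon$ is in hand, an entirely parallel computation for the difference $\Phi(\psi)-\Phi(\widetilde\psi)$, using the Lipschitz version \eqref{eq:compos2} of the Moser estimate and factorizing $\psi^{3}-\widetilde\psi^{3}=(\psi-\widetilde\psi)(\psi^{2}+\psi\widetilde\psi+\widetilde\psi^{2})$, gives $\|\Phi(\psi)-\Phi(\widetilde\psi)\|_{X}\le C\epsilon^{2}\|\psi-\widetilde\psi\|_{X}$. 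Choosing the data sufficiently small so that $C\epsilon^{2}<1/2$ yields a unique fixed point in the ball. The main obstacle I foresee is precisely the book-keeping of the critical nonlinear estimate: matching the Strauss-type weighted embedding of Lemma \ref{lem:radial33} (which requires a strict inequality $s<m/p-\sigma$) with the sharp wave-admissible condition and with the growth exponent $2k-2$ of $\alpha^{2}$; the indices \eqref{eq:pqind} are chosen so that this balance is attained, and verifying that everything is consistent with the extended admissibility range \eqref{eq:extadmWE} is the delicate computational step.
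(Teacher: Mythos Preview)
Your overall strategy is the same as the paper's---contraction via Duhamel, Strichartz estimates for the perturbed flow, and the nonlinear machinery of Kato--Ponce, Moser, and the weighted radial embeddings (in particular Lemma~\ref{lem:radial33}). However, there are two genuine gaps in the execution.

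\textbf{Commuting derivatives through the perturbed flow.} You invoke \eqref{eq:equivnorms}--\eqref{eq:equivnorms2} to replace $(\nu+H)^{s/2}$ by $\langle D\rangle^{s}$, but those equivalences are only valid for $|s|\le 1$. Here you must commute $\langle D\rangle^{\frac{n-1}{2}}$ (and the mixed combination $|D|^{1/2}\langle D\rangle^{\frac{n-1}{2}}$) through the flow $e^{it\sqrt{\nu+H}}$, both on the $L^{2}$ side for the data and on the $L^{q}$ side inside the Strichartz norm. This is exactly what Lemmas~\ref{sob1} and~\ref{sob2} (together with Lemma~\ref{lem:irrelevantM}) are for, and it is the reason the theorem assumes the derivative bounds \eqref{eq:assderV} on $V$. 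The paper's proof introduces $\langle\widetilde D\rangle=(M-\Delta)^{1/2}$ and $\langle\widetilde{D_{V}}\rangle=(M-\Delta+V)^{1/2}$ with $M$ large, then swaps back and forth via \eqref{eq1-sob1} and \eqref{eq1-lem2}; without this device the Strichartz estimates of Theorem~\ref{the:strichartz} cannot be upgraded to the $H^{\frac{n-1}{2}}_{q}$ level.

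\textbf{The time H\"older count.} Placing all three factors of $\psi$ in the Strichartz component $L^{p}_{t}H^{\frac{n-1}{2}}_{q}$ forces $\widetilde p\,'=p/3=\tfrac{4(m+1)}{3(m+3)}$, which equals $1$ at the minimal value $m=5$ (i.e.\ $n=3$, $k=1$); thus $\widetilde p=\infty$, the excluded endpoint. The paper instead chooses the dual pair $(\widetilde p,\widetilde q)=(a,a)$ with $a=\tfrac{2(m+1)}{m-1}$, so that the nonlinearity is estimated in $L^{a'}_{t,x}$; the time balance is then $\tfrac{1}{\infty}+\tfrac{2}{2a'}=\tfrac{1}{a'}$, i.e.\ one factor is placed in the energy component $|D|^{1/2}\psi\in L^{\infty}_{t}H^{\frac{n-1}{2}}$ and two in $L^{2a'}_{t}$. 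Concretely, for the cubic piece $\alpha^{2}\psi^{3}$ the paper splits via Kato--Ponce with specific indices $p_{1},\dots,p_{5}$ so that terms like $\|\langle D\rangle^{\frac{n-1}{2}}\psi\|_{L^{\infty}L^{p_{1}}}\|\alpha\psi\|^{2}_{L^{2a'}L^{2p_{2}}}$ and $\|\langle D\rangle^{\frac{n-1}{2}}(\alpha\psi)\|_{L^{\infty}L^{p_{5}}}\|\alpha\psi\|_{L^{2a'}L^{2p_{2}}}\|\psi\|_{L^{2a'}L^{p_{4}}}$ appear; the $L^{\infty}_{t}$ factors are controlled by $\||D|^{1/2}\langle D\rangle^{\frac{n-1}{2}}\psi\|_{L^{\infty}L^{2}}$ via Sobolev embedding or Lemma~\ref{lem:radial33}. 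The $L^{\infty}$ bound on $\beta\psi$ used in the Moser step also comes from the energy norm, $\|\beta\psi\|_{L^{\infty}_{t,x}}\lesssim\|\psi\|_{L^{\infty}\dot H^{n/2}}$, not from the Strichartz component as you suggest. Once you reorganize the factor distribution this way, the rest of your outline (decomposing $Z=Z(0)+\widetilde Z$, Lemma~\ref{lem:coif} for $\widetilde Z(\beta\psi)$, and the Lipschitz estimate for the difference) goes through exactly as the paper does.
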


\begin{proof}
  It is clear that the assumptions of
  Theorem \ref{the:strichartz} are satisfied.
  In particular, the function $h_{1}=\frac{h(r)-r}{r^{3}}$
  is of class $C^{[\frac{n-1}{2}]}$ as it can be verified
  by direct computation using properties \eqref{eq:prophr},
  and writing $h(r)=r+r^{3}h_{1}(r)$
  one checks easily that the potential
  $V(r)$ is of class $C^{[\frac{n-1}{2}]}$ also at the origin.
  In view of
  \eqref{eq:assderV}, we see that the assumptions
  of Lemmas \ref{lem:sobhom}, \ref{sob1},
  \ref{lem:irrelevantM} and \ref{sob2} are satisfied with
  the choice $c(x)=V(|x|)$.
  Thus we are in position to use the Strichartz estimates
  from that Theorem, and also the consequences in
  Corollary \ref{cor:nonhstr} and
  Remark \ref{rem:sobstricest}.

  Note also that the proof of the two cases $h_{\infty}=0$
  and $>0$ is almost identical; indeed, the Strichartz
  estimates that we use in the following
  are valid both in the wave and in the Klein-Gordon
  case. We shall perform
  the proof only in the first (slightly harder) case
  and leave to the reader to check that the argument works
  also in the second case with minimal modifications.

  Using the notations $|D_{V}|=(-\Delta+V)^{\frac12}$
  and $\bra{D_{V}}=(1-\Delta+V)^{\frac12}$,
  we define the nonlinear map
  \begin{equation*}
    \textstyle
    \Lambda(\psi(t,r)):=
    \cos (t|D_V|)f+
    \sin(t|D_V|)|D_V|^{-1}g+
    \ \Box_V^{-1}F
  \end{equation*}
  where
  \begin{equation*}
    \textstyle
    \Box_V^{-1}F=\int_{0}^{t}\frac{\sin((t-s)|D_V|)}{|D_V|}F(s)ds,
    \qquad
    F=\alpha(r)^2Z(\beta(r)\psi) \cdot\psi^3.
  \end{equation*}
  We shall perform a Picard iteration in a suitably defined space.
  Let
  \begin{equation*}
    \textstyle
    a=\frac{2(m+1)}{m-1},
    \qquad
    a'=\frac{2(m+1)}{m+3},
    \qquad
    b=\frac{4m(m+1)}{2m^{2}-m-5}
    \quad
    (\iff \ \frac{m}{b}=\frac{m-1}{2}-\frac{1}{2a'})
  \end{equation*}
  and define the space $X$ of functions $u(t,x)$
  on $\mathbb{R}_{t}\times \mathbb{R}_{x}^{m}$,
  spherically symmetric in $x$, such that the following norm
  is finite:
  \begin{equation*}
    \|u(t,x)\|_{X}:=
    \|u\|_{L^{2a'}_{t}H^{\frac{n-1}{2}}_{b}}+
    \||D|^{\frac12}u\|_{L_{t}^{\infty}H^{\frac{n-1}2}}.
  \end{equation*}
  Note that the couple $(2a',b)$ satisfies the
  extended condition \eqref{eq:extadmWE} since
  $m\ge 5$, thus we can apply the
  Sobolev-Strichartz estimate \eqref{eq:sobstriWE}
  with the choice $(p,r)=(2a',b)$.
  In the following computations we use the notations
  \begin{equation*}
    \bra{D}=(1-\Delta)^{\frac12},
    \qquad
    \bra{\widetilde{D}}=(M-\Delta)^{\frac12}
  \end{equation*}
  where $M$ is chosen large enough (with respect to $V$)
  that we can apply Lemma \ref{sob1}. In a similar way,
  we write
  \begin{equation*}
    \bra{D_{V}}=(1-\Delta+V)^{\frac12},
    \qquad
    \bra{\widetilde{D_{V}}}=(M-\Delta+V)^{\frac12}.
  \end{equation*}

  We must estimate $\|\Lambda(\psi)\|_{X}$. The first term is
  \begin{equation*}
    \|\cos (t|D_V|)f\|_{X} =
    \|\bra{D}^{\frac{n-1}{2}}\cos(t|D_V|)f\|
             _{L^{2a'}L^b}+
    \||D|^{\frac12}\bra{D}^{\frac{n-1}{2}}\cos (t|D_V|)f\|
             _{L^\infty L^2}
  \end{equation*}
  which, by \eqref{eq:MH}, \eqref{eq1-sob1}
  and \eqref{eq1-lem2}, is equivalent to
  \begin{equation*}
    \simeq
    \|\cos(t|D_V|)\bra{\widetilde{D_{V}}}^{\frac{n-1}{2}}f\|
             _{L^{2a'}L^b}+
    \|D|^{\frac12}\cos (t|D_V|)|
             \bra{\widetilde{D_{V}}}^{\frac{n-1}{2}}f\|
             _{L^\infty L^2}.
  \end{equation*}
  Using the Strichartz-Sobolev estimate \eqref{eq:sobstriWE}
  for the first term,
  and directly \eqref{eq:striWE} for the second term,
  we obtain
  \begin{equation*}
    \lesssim
    \||D|^{\frac12}\bra{\widetilde{D_{V}}}^{\frac{n-1}{2}}f\|_{L^{2}}
    \simeq
    \||D|^{\frac12}\bra{D}^{\frac{n-1}{2}}f\|_{L^{2}}
  \end{equation*}
  where in the last step we used again
  \eqref{eq1-lem2} from Lemma \ref{sob2}
  and \eqref{eq:MH}.
  In a similar way, for the second term in $\Lambda(\psi)$
  we obtain
  \begin{equation*}
    \|\sin(t|D_V|)|D_V|^{-1}g\|_{X}
    \lesssim
    \||D|^{-\frac12}\bra{D}^{\frac{n-1}{2}}g\|_{L^{2}}.
  \end{equation*}
  Next, for the last term in $\Lambda(\psi)$ we can write,
  proceeding as before,
  \begin{equation*}
    \|\Box_V^{-1}F\|_X
    \simeq
    \|\Box_V^{-1}\bra{\widetilde{D_V}}^{\frac{n-1}{2}}F\|
          _{L^{2a'}L^b}+
    \||D|^{\frac12}\Box_V^{-1}
        \bra{\widetilde{D_V}}^{\frac{n-1}{2}}F\|
          _{L^\infty L^2}
  \end{equation*}
  and using \eqref{eq:strnhWE}
  (with the extension in Remark \ref{rem:sobstricest}),
  \eqref{eq1-sob1} and \eqref{eq:MH}
  \begin{equation*}
    \lesssim
    \|\bra{\widetilde{D_V}}^{\frac{n-1}{2}}F\|
          _{L^{a'}_{t,x}}
    \simeq
    \|\bra{\widetilde{D}}^{\frac{n-1}{2}}F\|
          _{L^{a'}_{t,x}}
    \simeq
    \|\bra{D}^{\frac{n-1}{2}}F\|
          _{L^{a'}_{t,x}}
  \end{equation*}
  since $(a,a)$ is an admissible couple satisfying
  \eqref{eq:admpq}. Summing up, we have proved
  \begin{equation}\label{eq:intstep}
    \|\Lambda(\psi)\|_{X}
    \lesssim
    \||D|^{\frac12}\bra{D}^{\frac{n-1}{2}}f\|_{L^{2}} +
    \||D|^{-\frac12}\bra{D}^{\frac{n-1}{2}}g\|_{L^{2}}+
    \|\bra{D}^{\frac{n-1}{2}}F\| _{L^{a'}_{t,x}}.
  \end{equation}

  It remains to estimate the nonlinear term
  $F=\alpha(r)^2Z(\beta(r)\psi) \cdot\psi^3$.
  We claim that
  \begin{equation}\label{eq:nlclaim}
    \textstyle
    \|\bra{D}^{\frac{n-1}{2}}F\|
        _{L_{t,x}^{a'}}\lesssim \|\psi\|_X^3
        \cdot \Phi_{1}(\|\psi\|_{X})
        \quad\text{with}\quad
        a'=\frac{2(m+1)}{m+3},\ m=2k+n
  \end{equation}
  for some continous function $\Phi_{1}(s)$.
  Define $\widetilde{Z}(r):=Z(r)-Z(0)$
  and write
  \begin{equation}\label{eq:decompZ}
    \|\bra{D}^{\frac{n-1}{2}}F\|_{L^{a'}_{t,x}}=
    |Z(0)|\cdot
    \|\bra{D}^{\frac{n-1}{2}}(\alpha^{2}\psi^{3})\|_{L^{a'}_{t,x}}
    +
    \|\bra{D}^{\frac{n-1}{2}}(\widetilde{Z}(\beta \psi)
        \alpha^{2}\psi^{3}) \|_{L^{a'}_{t,x}}.
  \end{equation}
  For the first term in \eqref{eq:decompZ} we have,
  by the Kato-Ponce inequality \eqref{eq:poncekato2},
  \begin{equation*}
    \|\bra{D}^{\frac{n-1}{2}}(\alpha^{2}\psi^{3})\|_{L^{a'}_{t,x}}
    \lesssim
    \|\bra{D}^{\frac{n-1}{2}}\psi\|_{L^{\infty}L^{p_{1}}}
    \|\alpha\psi\|^{2}_{L^{2a'}L^{2p_{2}}}
    +
    \|\bra{D}^{\frac{n-1}{2}}(\alpha^{2}\psi^{2})\|
        _{L^{2a'}L^{p_{3}}}
    \|\psi\|_{L^{2a'}L^{p_{4}}}
  \end{equation*}
  where we have chosen
  \begin{equation*}
    \textstyle
    \frac{1}{p_{1}}=\frac{m-1}{2m},
    \qquad
    \frac{1}{p_{2}}=\frac{1}{a'}-\frac{m-1}{2m},
    \qquad
    \frac{1}{p_{3}}=\frac{2m+1}{2ma'}-\frac km,
    \qquad
    \frac{1}{p_{4}}=\frac km-\frac{1}{2ma'}.
  \end{equation*}
  By Sobolev embedding and by \eqref{eq:strauss} we have,
  since
  $k+\frac{n-1}{2}-\frac{m}{b}=\frac{1}{2a'}
  =1-\frac{m}{2p_{2}}$,
  \begin{equation*}
    \|v\|_{L^{p_{1}}}\lesssim\||D|^{\frac12}v\| _{L^{2}},
    \qquad
    \|\alpha v\|_{L^{2p_{2}}}\lesssim
    \|r^{k-1}v\|_{L^{2p_{2}}}\lesssim
    \||D|^{\frac{n-1}{2}}v\|_{L^{b}}
  \end{equation*}
  and these inequalities imply
  \begin{equation}\label{eq:I}
    \|\bra{D}^{\frac{n-1}{2}}\psi\|_{L^{\infty}L^{p_{1}}}
    \|\alpha\psi\|^{2}_{L^{2a'}L^{2p_{2}}}
    \lesssim
    \|\psi\|_{X}^{3}.
  \end{equation}
  Also by Sobolev embedding,
  since $\frac{m}{p_{4}}=k-\frac{1}{2a'}=\frac mb-\frac{n-1}{2}$,
  we have
  \begin{equation*}
    \|\psi\|_{L^{2a'} L^{p_{4}}}\lesssim
    \|\bra{D}^{\frac{n-1}{2}}\psi\|_{L^{2a'} L^{b}}
    \le\|\psi\|_{X}.
  \end{equation*}
  On the other hand, if we define
  \begin{equation*}
    \textstyle
    \frac{1}{p_{5}}=\frac{n+1}{2m}\equiv
    \frac{m+1}{2m}-\frac km
  \end{equation*}
  we have $\frac{1}{p_{3}}=\frac{1}{p_{5}}+\frac{1}{2p_{2}}$
  and by the Kato-Ponce inequality we can write
  \begin{equation*}
    \|\bra{D}^{\frac{n-1}{2}}(\alpha^{2}\psi^{2})\|
        _{L^{2a'} L^{p_{3}}}
    \lesssim
    \|\bra{D}^{\frac{n-1}{2}}(\alpha \psi)\|
        _{L^{\infty} L^{p_{5}}}
    \|\alpha \psi\|_{L^{2a'} L^{2p_{2}}};
  \end{equation*}
  the last factor is bounded by $\|\psi\|_{X}$
  as above, while for the other one we have
  \begin{equation*}
    \|\bra{D}^{\frac{n-1}{2}}(\alpha \psi)\|
        _{L^{\infty} L^{p_{5}}}
    \lesssim
    \||D|^{\frac12}\bra{D}^{\frac{n-1}{2}}\psi\|
        _{L^{\infty}L^{2}}\le\|\psi\|_{X};
  \end{equation*}
  in the last step we wrote
  $\|\bra{D}^{\frac{n-1}{2}}(\alpha \psi)\|_{L^{p_{5}}}
      \simeq
      \|\alpha \psi\|_{L^{p_{5}}}+
      \||D|^{\frac{n-1}{2}}\alpha \psi\|_{L^{p_{5}}}$
  and applied \eqref{eq:radialgamma} to each term.
  Summing up, we have proved that
  the first term in \eqref{eq:decompZ} can be estimated with
  \begin{equation}\label{eq:cubic}
    \|\bra{D}^{\frac{n-1}{2}}(\alpha^{2}\psi^{3})\|_{L^{a'}_{t,x}}
    \lesssim
    \|\psi\|_{X}^{3}.
  \end{equation}
  We now estimate the second term of
  \eqref{eq:decompZ}. By Kato-Ponce we have
  \begin{equation}\label{eq:II}
    \|\bra{D}^{\frac{n-1}{2}}[\widetilde{Z}\alpha^{2}\psi^{3}]\|
        _{L^{a'}_{t,x}}
    \lesssim
    \|\bra{D}^{\frac{n-1}{2}}(\alpha^{2}\psi^{3})\|
        _{L^{a'}_{t,x}}
    \|\widetilde{Z}\|_{L^{\infty}_{t,x}}
    +
    \|\alpha^{2}\psi^{3}\|_{L^{a'}L^{q_{1}}}
    \|\bra{D}^{\frac{n-1}{2}}\widetilde{Z}\|_{L^{\infty}L^{q_{2}}}
  \end{equation}
  where we choose
  \begin{equation*}
    \textstyle
    \frac{1}{q_{1}}=\frac{1}{a'}+\frac km-\frac{m-1}{2m}
    \equiv
    \frac{1}{a'}-\frac{n-1}{2m},
    \qquad\qquad
    \frac{1}{q_{2}}=\frac{1}{a'}-\frac{1}{q_{1}}
    \equiv
    \frac{m-1}{2m}-\frac km
    \equiv
    \frac{n-1}{2m}.
  \end{equation*}
  By \eqref{eq:strauss} we have
  \begin{equation*}
    \|\beta \psi\|_{L^{\infty}_{t,x}}
    \lesssim
    \|r^{k}\psi\|_{L^{\infty}}
    \lesssim
    \|\psi\|_{L^{\infty}\dot H^{\frac m2-k}}
    =
    \|\psi\|_{L^{\infty}\dot H^{\frac n2}}
    \le
    \|\psi\|_{X}
  \end{equation*}
  and this implies, for some continuous $\Phi_{1}(s)$,
  \begin{equation*}
    \|\widetilde{Z}(\beta \psi)\|_{L^{\infty}_{t,x}}
    \lesssim
    \Phi_{1}(\|\psi\|_{X}).
  \end{equation*}
  Further, by Lemma \ref{lem:coif},
  we have
  \begin{equation*}
    \|\bra{D}^{\frac{n-1}{2}}\widetilde{Z}(\beta \psi)\|
        _{L^{\infty}L^{q_{2}}}
    \lesssim
    \Phi_{2}(\|\beta \psi\|_{L^{\infty}_{t,x}})
    \|\bra{D}^{\frac{n-1}{2}}(\beta \psi)\|
        _{L^{\infty}L^{q_{2}}}
  \end{equation*}
  while by \eqref{eq:radialgamma} (writing as above
  the nonhomogeneous Sobolev norm as a sum of homogeneous
  terms and applying \eqref{eq:radialgamma} to each term)
  \begin{equation*}
    \|\bra{D}^{\frac{n-1}{2}}(\beta \psi)\|
        _{L^{\infty}L^{q_{2}}}
    \lesssim
    \||D|^{\frac12}\bra{D}^{\frac{n-1}{2}}\psi\|_{L^{\infty}L^{2}}
    \le
    \|\psi\|_{X}
  \end{equation*}
  which gives
  \begin{equation*}
    \|\bra{D}^{\frac{n-1}{2}}\widetilde{Z}(\beta \psi)\|
        _{L^{\infty}L^{q_{2}}}
    \lesssim
    \Phi_{2}(\|\psi\|_{X})\|\psi\|_{X}.
  \end{equation*}
  Finally, we have by Sobolev embedding
  \begin{equation*}
    \|\alpha^{2}\psi^{3}\|_{L^{a'}L^{q_{1}}}
    \lesssim
    \|\bra{D}^{\frac{n-1}{2}}(\alpha^{2}\psi^{3})\|
        _{L^{a'}_{t,x}}
  \end{equation*}
  and coming back to \eqref{eq:II} we obtain, recalling also
  \eqref{eq:cubic},
  \begin{equation*}
    \|\bra{D}^{\frac{n-1}{2}}[\widetilde{Z}\alpha^{2}\psi^{3}]\|
        _{L^{a'}_{t,x}}
    \lesssim
    \|\psi\|_{X}^{3}\cdot
    \Phi_{1}(\|\psi\|_{X})+
    \|\psi\|_{X}^{4}\cdot
    \Phi_{2}(\|\psi\|_{X}).
  \end{equation*}
  Putting everything together, we obtain the claim
  \eqref{eq:claim} and in conclusion we have proved
  \begin{equation*}
      \|\Lambda(\psi)\|_{X}
      \lesssim
      \||D|^{\frac12}\bra{D}^{\frac{n-1}{2}}f\|_{L^{2}} +
      \||D|^{-\frac12}\bra{D}^{\frac{n-1}{2}}g\|_{L^{2}}+
      \|\psi\|_X^3 \cdot \Phi(\|\psi\|_{X})
  \end{equation*}
  In a similar way we can prove the estimate
  \begin{equation*}
    \|\Lambda(\psi_{1})-\Lambda(\psi_{2})\|_{X}
    \lesssim
    \|\psi_{1}-\psi_{2}\|_{X}
    \cdot
    [\|\psi_{1}\|_{X}^{2}+\|\psi_{2}\|_{X}^{2}]
    \cdot
    \Phi_{3}(\|\psi_{1}\|_{X}+\|\psi_{2}\|_{X})
  \end{equation*}
  and this is sufficient to deduce the existence of a fixed
  point for $\Lambda$ in $X$, provided the quantity
  \begin{equation*}
    \||D|^{\frac12}\bra{D}^{\frac{n-1}{2}}f\|_{L^{2}} +
    \||D|^{-\frac12}\bra{D}^{\frac{n-1}{2}}g\|_{L^{2}}
  \end{equation*}
  is sufficiently small.
  Note that continuity in time
  of the solution follows from the fact that
  the fixed point can be obtained as the limit in the space
  $X$ of a sequence of smooth Picard iterates.
  This concludes the proof of the Theorem.
\end{proof}

\begin{remark}[]\label{rem:LWPlargedata}
  We show how to modify the previous proof in order to deduce
  local existence with large data, as mentioned in
  Remark \ref{rem:LWP}.
  Let $T>0$ and define a version $X_{T}$ of the space $X$
  in which the norms $L^{2a'}_{t}H^{\frac{n-1}{2}}_{b}$ and
  $L_{t}^{\infty}H^{\frac{n-1}2}$ are now restricted to
  the time interval $t\in[0,T]$; we denote the restricted
  norms with $L^{2a'}_{T}H^{\frac{n-1}{2}}_{b}$ and
  $L_{T}^{\infty}H^{\frac{n-1}2}$ respectively.
  The operator $\Lambda$ is defined as above, and we look for a fixed point
  in the closed ball of $X_{T}$
  \begin{equation*}
    B_{\epsilon}=\{\psi\in X_{T}:
    \|\psi-\psi_{lin}\|_{L^{2a'}_{T}H^{\frac{n-1}{2}}_{b}}+
        \||D|^{\frac12}(\psi-\psi_{lin})\|_{L_{T}^{\infty}H^{\frac{n-1}2}}\le
        \epsilon \}
  \end{equation*}
  where
  \begin{equation*}
    \psi_{lin}:=
    \cos (t|D_V|)f+
    \sin(t|D_V|)|D_V|^{-1}g.
  \end{equation*}
  Note that if $\psi\in B_{\epsilon}$ we have
  \begin{equation*}
    \||D|^{\frac12}\psi\|_{L^{\infty}_{T}H^{\frac{n-1}{2}}}\le
    \epsilon+E_{0}
  \end{equation*}
  where
  \begin{equation}\label{eq:X2Y}
    E_{0}:=
    \||D|^{\frac12}f\|_{H^{\frac{n-1}2}}+
    \||D|^{-\frac12}g\|_{H^{\frac{n-1}2}},
  \end{equation}
  while
  \begin{equation*}
    \|\psi\|_{L^{2a'}_{T}H^{\frac{n-1}{2}}_{b}}\le 2 \epsilon
  \end{equation*}
  provided $T$ is sufficiently small, since the
  $L^{2a'}_{T}H^{\frac{n-1}{2}}_{b}$ norm of $\psi_{lin}$ is bounded
  and hence tends to 0 as $T\to0$.
  We have now, for $\psi\in B_{\epsilon}$,
  \begin{equation*}
    \|\Lambda(\psi)-\psi_{lin}\|_{X_{T}} =
    \|\Box^{-1}_{V}F\|_{L^{2a'}_{T}H^{\frac{n-1}{2}}_{b}}+
    \|\Box^{-1}_{V}F\|_{L_{T}^{\infty}H^{\frac{n-1}2}}.
  \end{equation*}
  Repeating the steps of the previous proof, but using now time
  localized versions of the Strichartz estimates, we prove that
  \begin{equation*}
    \|\Lambda(\psi)-\psi_{lin}\|_{X_{T}}
    \lesssim
    \|\psi\|_{L^{2a'}_{T}H^{\frac{n-1}{2}}_{b}}^{2}
    \||D|^{\frac12}\psi\|_{L^{\infty}_{T}H^{\frac{n-1}{2}}}
    \cdot \Phi(\|\psi\|_{X_{T}})
    \le \epsilon/2
  \end{equation*}
  provided $\epsilon$ is chosen small enough; in particular,
  $\Lambda$ takes $B_{\epsilon}$ into itself.
  A corresponding estimate can be proved for
  $\Lambda(\psi_{1})-\Lambda(\psi_{2})$,
  with $\psi_{1},\psi_{2}\in B_{\epsilon}$:
  \begin{equation*}
    \|\Lambda(\psi_{1})-\Lambda(\psi_{2})\|_{X_{T}}\lesssim
    [\|\psi_{1}\|_{L^{2a'}_{T}H^{\frac{n-1}{2}}_{b}}+
      \|\psi_{2}\|_{L^{2a'}_{T}H^{\frac{n-1}{2}}_{b}}] \cdot
    \Phi_{3}(\|\psi_{1}\|_{X_{T}}+\|\psi_{2}\|_{X_{T}})
    \cdot
    \|\psi_{1}-\psi_{2}\|_{X_{T}};
  \end{equation*}
  since the factor in square brackets is bounded by $4 \epsilon$,
  we see that
  $\Lambda$ is a contraction if $\epsilon$ is sufficiently small.

  An inspection of the previous argument shows that the
  small data assumption in the main Theorem can be weakened,
  by assuming only that the linear part of
  the flow $u_{lin}$ is sufficiently small.
\end{remark}

It is possible to improve the uniqueness part
of the previous result by a slight increase of
the regularity of the initial data:

\begin{theorem}[Regularity and unconditional uniqueness]
  \label{the:regulUU}
  Consider Problem \eqref{eq:cauchykey}, \eqref{eq:cauchykeydata}
  under the same assumptions of Theorem \ref{the:keyWE}.

  In the case $h_{\infty}>0$, if
  for some $0\le\delta<k$ the quantity
  $\|f\|_{H^{\frac{n}{2}+\delta}}
     +\|g\|_{H^{\frac{n}{2}-1+\delta}}$
  is sufficiently small,
  then the problem has a global solution $\psi$ with
  $\psi\in L^{\infty}H^{\frac{n}{2}+\delta}
    \cap CH^{\frac{n}{2}+\delta}
    \cap L^{p}H^{\frac{n-1}{2}}_{q}$,
  with $p,q$ as in Theorem \ref{the:keyWE}.
  If $\delta\ge \frac{1}{m+1}$, this is the unique solution
  in $CH^{\frac{n}{2}+\delta}$.

  In the case $h_{\infty}=0$, if
  for some $0\le\delta<k$ the quantity
  $\||D|^{\frac12}f\|_{H^{\frac{n-1}{2}+\delta}}
     +\||D|^{-\frac12}g\|_{H^{\frac{n-1}{2}+\delta}}$
  is sufficiently small,
  then the problem has a unique global solution $\psi$ with
  $|D|^{\frac12}\psi\in L^{\infty}H^{\frac{n-1}{2}+\delta}
    \cap CH^{\frac{n-1}{2}+\delta}$ and
  $\psi\in L^{p}H^{\frac{n-1}{2}}_{q}$.
  If $\delta\ge \frac{1}{m+1}$, this is the unique solution
  with $|D|^{\frac12}\psi\in CH^{\frac{n-1}{2}+\delta}$.

\end{theorem}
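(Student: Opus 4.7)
The plan has two stages: first, to extend the Picard iteration of Theorem \ref{the:keyWE} to a higher--regularity space $X_\delta$ to obtain a solution with the asserted regularity; second, to prove unconditional uniqueness in $CH^{n/2+\delta}$ (for $\delta\ge 1/(m+1)$) by a bootstrap showing that every such solution automatically lies in the Strichartz space $X$ of Theorem \ref{the:keyWE}, where uniqueness is already known.

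For the existence step I would set
\begin{equation*}
  \|u\|_{X_\delta}:=\|u\|_{L^{2a'}_t H^{\frac{n-1}{2}+\delta}_b}+\||D|^{1/2}u\|_{L^\infty_t H^{\frac{n-1}{2}+\delta}},
\end{equation*}
with the same exponents $a',b$ as in Theorem \ref{the:keyWE}. The linear Strichartz and smoothing estimates from Theorem \ref{the:strichartz} and Corollary \ref{cor:nonhstr} commute with fractional powers of the operator, so, combined with the norm equivalences of Lemmas \ref{sob1}, \ref{lem:irrelevantM} and \ref{sob2}, the linear part of the fixed point argument lifts to $X_\delta$ unchanged. The nonlinear step requires reproving the estimate on $\|\bra{D}^{(n-1)/2+\delta}F(\psi)\|_{L^{a'}_{t,x}}$ via Kato-Ponce (Lemma \ref{lem:katoponce}), the weighted radial Sobolev embedding (Lemma \ref{lem:radial33}) and the Moser inequality (Lemma \ref{lem:coif}), applied exactly as in the proof of Theorem \ref{the:keyWE} with the exponents shifted by $\delta$. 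The constraint $\delta<k$ enters precisely in Lemma \ref{lem:radial33}: the weights $\alpha\sim r^{k-1}$ and $\beta\sim r^k$ satisfy the decay hypothesis \eqref{eq:decaygamma} up to order $\sigma=\frac{n-1}{2}+\delta$ if and only if $\delta<k$.

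For the uniqueness step, let $\tilde\psi$ be any solution in $CH^{n/2+\delta}$ with $\delta\ge 1/(m+1)$. The goal is to prove $\tilde\psi\in L^{2a'}_T H^{(n-1)/2}_b$ on a sufficiently small time interval $[0,T]$; once this is established, the uniqueness in $X$ from Theorem \ref{the:keyWE} gives $\tilde\psi=\psi$ on $[0,T]$, and a standard continuation argument extends this to all of $\mathbb{R}$ using the global $L^\infty H^{n/2+\delta}$ bound obtained in the first step. The bootstrap is driven by the Duhamel identity $\tilde\psi=\psi_{lin}+\Box_V^{-1}F(\tilde\psi)$: combining the time--localized Strichartz estimate with Kato-Ponce, estimating two of the three $\tilde\psi$ factors in the cubic nonlinearity using the Sobolev--type embedding $L^\infty_T H^{n/2+\delta}\hookrightarrow L^\infty_T L^{r}_x$ available from Lemma \ref{lem:radial} for radial functions, and the third factor in the Strichartz norm, one obtains
\begin{equation*}
  \|\tilde\psi-\psi_{lin}\|_{L^{2a'}_T H^{(n-1)/2}_b}
  \lesssim
  T^{\eta}\,\Phi\bigl(\|\tilde\psi\|_{L^\infty_T H^{n/2+\delta}}\bigr)\,\|\tilde\psi-\psi_{lin}\|_{L^{2a'}_T H^{(n-1)/2}_b}
\end{equation*}
plus a controlled linear contribution, where $\eta>0$ precisely when $\delta\ge 1/(m+1)$. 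Taking $T$ small closes the bootstrap.

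The main obstacle is the identification of the sharp threshold $\delta=1/(m+1)$: the key is that the Sobolev gap between $H^{n/2+\delta}$ and the critical Strichartz norm on $\mathbb{R}^m$, once accounted for via the Strauss-type radial embeddings, produces a strictly positive H\"older-in-time exponent exactly at this threshold. A careful bookkeeping of which factors of the cubic nonlinearity receive high regularity versus Strichartz allocation must match the dual exponent of the critical Strichartz estimate, which is what forces the specific value $1/(m+1)$; the rest (continuation, passage to $CH^{n/2+\delta}$ continuity, and the parallel treatment of the $h_\infty=0$ case) is routine following the pattern of Theorem \ref{the:keyWE}.
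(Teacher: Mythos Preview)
Your existence/regularity argument is correct and coincides with the paper's: the space $X_\delta$, the lifting of the linear estimates via Lemmas \ref{sob1}--\ref{sob2}, and the identification of the constraint $\delta<k$ through Lemma \ref{lem:radial33} are exactly what the paper does.

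For unconditional uniqueness your route diverges from the paper's and the mechanism you describe for the threshold is not correct. You plan to bootstrap an arbitrary $CH^{n/2+\delta}$ solution into the Strichartz space $L^{2a'}_T H^{(n-1)/2}_b$ and assert that the H\"older-in-time exponent $\eta$ becomes positive precisely at $\delta=1/(m+1)$. But if two factors are placed in $L^\infty_t$ and one in $L^{2a'}_t$, H\"older in time already gives $T^{1/(2a')}$ regardless of $\delta$; the constraint on $\delta$ can only enter through the \emph{spatial} exponents. Redoing the Kato--Ponce bookkeeping with your allocation (two factors controlled by $H^{n/2+\delta}$ via Strauss, one by $H^{(n-1)/2}_b$, the $(n-1)/2$ derivatives distributed) does not produce the value $1/(m+1)$ in general. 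There is also a circularity issue: your displayed inequality has the Strichartz norm of $\tilde\psi-\psi_{lin}$ on both sides, but nothing in the hypothesis $CH^{n/2+\delta}$ tells you this norm is finite to begin with, so the absorption step is formal.

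The paper avoids all of this by a more elementary and sharp argument. It does \emph{not} try to place the second solution in a Strichartz space; it estimates the difference $\chi=\tilde\psi-\psi$ directly in the scalar norm $L^{2(m+1)/(m-1)}_{t\in I,x}$, with no derivatives, using the diagonal nonhomogeneous Strichartz estimate \eqref{eq:strnhWE} with $p=q=\widetilde p=\widetilde q$. Since $\tfrac{1}{a'}=\tfrac{1}{a}+\tfrac{2}{m+1}$, the cubic structure is handled by H\"older with one factor of $\chi$ in $L^{a}_{t,x}$ and two weighted factors $\alpha\psi$ (or $\alpha\chi$) in $L^{m+1}_{t,x}$; the Strauss inequality \eqref{eq:strauss} then gives
\[
\|r^{k-1}v\|_{L^{m+1}(\mathbb{R}^m)}\lesssim\|v\|_{\dot H^{\,n/2+1/(m+1)}},
\]
and this identity is exactly where $1/(m+1)$ comes from. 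Smallness is automatic because $\|v\|_{L^{m+1}_{t\in I}\dot H^{n/2+1/(m+1)}}\to 0$ as $|I|\to 0$ for $v\in L^\infty_t H^{n/2+1/(m+1)}$, yielding $\|\chi\|_{L^a_{t\in I,x}}\le\tfrac12\|\chi\|_{L^a_{t\in I,x}}$ on a short interval. The a priori finiteness of $\|\chi\|_{L^a_{t\in I,x}}$ is immediate from $H^{n/2}(\mathbb{R}^m)\hookrightarrow L^{a}(\mathbb{R}^m)$, so no bootstrap is needed.
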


\begin{proof}
  As usual we give the detail of the proof only in
  the case $h_{\infty}=0$ which is more delicate.
  Let $X_{\delta}$ be the space with norm
  ($n=m-2k$)
  \begin{equation*}
    \|u(t,x)\|_{X_{\delta}}:=
    \|u\|_{L^{2a'}_{t}H^{\frac{n-1}{2}+\delta}_{b}}+
    \||D|^{\frac12}u\|_{L_{t}^{\infty}H^{\frac{n-1}2+\delta}},
    \qquad
    \delta\ge0
  \end{equation*}
  so that the space used in the previous proof is $X_{0}$.
  Following the same steps we arrive at the estimate
  \begin{equation*}
    \|\Lambda(\psi)\|_{X_{\delta}}\lesssim
    \||D|^{\frac12}\bra{D}^{\frac{n-1}{2}_{\delta}}f\|_{L^{2}} +
    \||D|^{-\frac12}\bra{D}^{\frac{n-1}{2}+\delta}g\|_{L^{2}}+
    \|\bra{D}^{\frac{n-1}{2}+\delta}F\| _{L^{a'}_{t,x}}.
  \end{equation*}
  with $F=\alpha^{2}Z(\beta \psi)\cdot \psi^{3}$. The proof
  of the nonlinear estimate proceeds as before; instead of
  \eqref{eq:I} we get
  \begin{equation*}
    \|\bra{D}^{\frac{n-1}{2}+\delta}\psi\|_{L^{\infty}L^{p_{1}}}
    \|\alpha\psi\|^{2}_{L^{2a'}L^{2p_{2}}}
    \lesssim
    \|\psi\|_{X_{\delta}}\|\psi\|_{X_{0}}^{2}
    \lesssim
    \|\psi\|_{X_{\delta}}^{3}.
  \end{equation*}
  with the same choice of indices, since
  $\|\psi\|_{X_{0}}\le\|\psi\|_{X_{\delta}}$. In a similar
  way we have, with the same indices as before,
  \begin{equation*}
    \|\bra{D}^{\frac{n-1}{2}+\delta}(\alpha^{2}\psi^{2})\|
      _{L^{2a'}L^{p_{3}}}
    \lesssim
    \|\bra{D}^{\frac{n-1}{2}+\delta}(\alpha \psi)\|
      _{L^{\infty}L^{p_{5}}}
    \|\alpha \psi\|_{L^{2a'}L^{2p_{2}}}
  \end{equation*}
  where the last factor is bounded by $\|\psi\|_{X_{0}}$;
  on the other hand,
  \begin{equation*}
    \|\bra{D}^{\frac{n-1}{2}+\delta}(\alpha \psi)\|
      _{L^{\infty}L^{p_{5}}}
    \lesssim
    \|\alpha \psi\|_{L^{\infty}L^{p_{5}}}
    +
    \||D|^{\frac{n-1}{2}+\delta}(\alpha \psi)\|
      _{L^{\infty}L^{p_{5}}}
  \end{equation*}
  where
  $\|\alpha \psi\|_{L^{\infty}L^{p_{5}}}\lesssim\|\psi\|_{X_{0}}$
  while, using Lemma \ref{lem:radial33},
  \begin{equation*}
    \||D|^{\frac{n-1}{2}+\delta}(\alpha \psi)\|
      _{L^{\infty}L^{p_{5}}}
    \lesssim
    \||D|^{\frac n2+\delta}\psi\|_{L^{\infty}L^{2}}
  \end{equation*}
  provided
  \begin{equation*}
    \textstyle
    \frac12-\frac1{p_{5}}\le
    \frac12<
    \frac m2-\frac{n-1}{2}-\delta
    \equiv
    k+\frac12-\delta
  \end{equation*}
  i.e., $\delta<k$. In conclusion we have
  \begin{equation*}
    \|\bra{D}^{\frac{n-1}{2}+\delta}(\alpha^{2}\psi^{3})\|
      _{L^{a'}_{t,x}}
    \lesssim
    \|\psi\|^{3}_{X_{\delta}}
    \quad\text{provided}\quad
    \delta<k.
  \end{equation*}
  The estimate of the full nonlinear term
  $F=\alpha^{2}\psi^{3}Z(\beta \psi)$ is similar.
  Thus we obtain global existence and uniqueness in $X_{\delta}$
  for all $0\le \delta<k$. This proves the regularity part
  of the statement.

  To prove unconditional uniqueness, consider two solutions
  $\psi,\widetilde{\psi}$
  belonging to $C([0,T];H^{\frac{n}{2}+\delta})$ for
  some $T>0$; we shall prove that if $\delta\ge\frac{1}{m+1}$
  then $\psi \equiv \widetilde{\psi}$ on some smaller interval
  $t\in[0,\epsilon]$, and this will conclude the proof.
  The difference $\chi=\widetilde{\psi}-\psi$ satisfies the equation
  \begin{equation*}
   \chi''-\Delta \chi+V \chi=F_{1}+F_{2},
   \qquad
   \chi(0,r)=\chi_{t}(0,r)=0
  \end{equation*}
  where
  \begin{equation*}
    F_{1}=\alpha^{2}(\chi^{3}+3 \chi^{2}\psi+3 \chi \psi^{2})
    Z(\beta \widetilde{\psi})
  \end{equation*}
  and
  \begin{equation*}
    \textstyle
    F_{2}=\alpha^{2} \psi^{3}
    [Z(\beta \widetilde{\psi})-Z(\beta \psi)]=
    \alpha^{2}\psi^{3}
    \beta \chi \cdot\int_{0}^{1}
        Z((1-s)\beta \widetilde{\psi}+s \beta \psi)ds.
  \end{equation*}
  Note that
  \begin{equation*}
    \textstyle
    Z_{0}(t,r):=Z(\beta \widetilde{\psi}),
    \qquad
    Z_{1}(t,r):=
    \int_{0}^{1} Z((1-s)\beta \widetilde{\psi}+s \beta \psi)ds
  \end{equation*}
  are bounded functions by estimate \eqref{eq:strauss}.
  We now apply the Strichartz estimate \eqref{eq:strnhWE} for the special
  case $p=q=\widetilde{p}=\widetilde{q}$:
  \begin{equation}\label{eq:strichpt}
    \textstyle
    \|\int_{0}^{t}H^{-\frac12}e^{i(t-s)\sqrt{H}}Fds\|
        _{L^{\frac{2(m+1)}{m-1}}_{t,x}}
    \lesssim
    \|F\|_{L^{\frac{2(m+1)}{m+3}}_{t,x}}.
  \end{equation}
  Localizing the estimate on a time interval $I=[0,\epsilon]$
  to be chosen, we obtain
  \begin{equation*}
    \|\chi\|_{L^{\frac{2(m+1)}{m-1}}_{t\in I,x}}
    \lesssim
    \|F_{1}+F_{2}\|_{L^{\frac{2(m+1)}{m+3}}_{t\in I,x}}.
  \end{equation*}
  By H\"{o}lder's inequality and \eqref{eq:strauss}
  we have
  \begin{equation*}
    \|\chi \cdot(\alpha\chi)^{2}\cdot Z_{0}\|
        _{L^{\frac{2(m+1)}{m+3}}}
    \lesssim
    \|\chi\|_{L^{\frac{2(m+1)}{m-1}}}
    \|\alpha \chi\|^{2}_{L^{m+1}}
    \lesssim
    \|\chi\|_{L^{\frac{2(m+1)}{m-1}}}
    \|\chi\|^{2}
        _{L^{m+1}_{t\in I}\dot H^{\frac n2+\frac{1}{m+1}}}.
  \end{equation*}
  Similar estimates hold for the other two terms in $F_{1}$:
  \begin{equation*}
    \|(\alpha\chi) (\alpha \psi)\chi \cdot Z_{0}\|
        _{L^{\frac{2(m+1)}{m+3}}}
    \lesssim
    \|\chi\|_{L^{\frac{2(m+1)}{m-1}}}
    \|\psi\|
        _{L^{m+1}_{t\in I}\dot H^{\frac n2+\frac{1}{m+1}}}
    \|\chi\|
        _{L^{m+1}_{t\in I}\dot H^{\frac n2+\frac{1}{m+1}}}
  \end{equation*}
  and
  \begin{equation*}
    \|\chi \cdot(\alpha\psi)^{2}\cdot Z_{0}\|
        _{L^{\frac{2(m+1)}{m+3}}}
    \lesssim
    \|\chi\|_{L^{\frac{2(m+1)}{m-1}}}
    \|\psi\|^{2}
        _{L^{m+1}_{t\in I}\dot H^{\frac n2+\frac{1}{m+1}}}.
  \end{equation*}
  and in conclusion
  \begin{equation*}
    \|F_{1}\|_{L^{\frac{2(m+1)}{m+3}}}
    \lesssim
    \|\chi\|_{L^{\frac{2(m+1)}{m-1}}}
    \cdot
    (\|\psi\|
        _{L^{m+1}_{t\in I}\dot H^{\frac n2+\frac{1}{m+1}}}
    +
    \|\chi\|
        _{L^{m+1}_{t\in I}\dot H^{\frac n2+\frac{1}{m+1}}})^{2}.
  \end{equation*}
  To estimate $F_{2}$ we write analogously
  \begin{equation*}
    \|(\alpha \psi)^{2}\chi \cdot (\beta \psi)Z_{1}\|
        _{L^{\frac{2(m+1)}{m+3}}}
    \lesssim
    \|\chi\|_{L^{\frac{2(m+1)}{m-1}}}
    \|\psi\|^{2}
        _{L^{m+1}_{t\in I}\dot H^{\frac n2+\frac{1}{m+1}}}.
  \end{equation*}
  Summing up, the function $\chi$ satisfies the following
  estimate on the interval $t\in I=[0,\epsilon]$:
  \begin{equation*}
    \|\chi\|_{L^{\frac{2(m+1)}{m-1}}_{t\in I,x}}
    \lesssim
    (\|\psi\|
        _{L^{m+1}_{t\in I}\dot H^{\frac n2+\frac{1}{m+1}}}
    +
    \|\chi\|
        _{L^{m+1}_{t\in I}\dot H^{\frac n2+\frac{1}{m+1}}})^{2}
    \cdot
    \|\chi\|_{L^{\frac{2(m+1)}{m-1}}_{t\in I,x}}
  \end{equation*}
  Since we know a priori that the
  $L^{\infty}(I;H^{\frac n2+\frac{1}{m+1}})$ norm of
  $\chi$ and $\psi$ is bounded, we deduce
  \begin{equation*}
    \|\psi\|
        _{L^{m+1}_{t\in I}\dot H^{\frac n2+\frac{1}{m+1}}}
    +
    \|\chi\|
        _{L^{m+1}_{t\in I}\dot H^{\frac n2+\frac{1}{m+1}}}
    \to0
    \quad\text{as}\quad
    \epsilon\to0.
  \end{equation*}
  Hence for $\epsilon$ sufficiently small we obtain
  $\|\chi\|\le \frac12\|\chi\|$ which implies $\chi \equiv0$
  for $t\in[0,\epsilon]$, as claimed.
\end{proof}

\section{The equivariant wave map equation}
\label{sec:applications}

This final section of the paper contains the main application of
Theorem \ref{the:keyWE} to the equivariant wave map
equation. The assumptions on the base manifolds
define a class of manifolds which for the sake of
exposition we call \emph{admissible}.
Note that if a smooth and rotationally
symmetric manifold $M^{n}$ has a global metric
$dr^{2}+h(r)^{2}d \omega^{2}_{\mathbb{S}^{n-1}}$,
then $h(r)$ must be the restriction to $\mathbb{R}^{+}$
of a $C^{\infty}$ odd function, with $h(0)=0$ and $h'(0)=1$.
Note also that our result applies
to $C^{k}$ manifolds with $k=[\frac{n-1}{2}]+3$, but
for simplicity we confine ourselves to the smooth case.

\begin{definition}[Admissible manifold]\label{def:admman}
  We say that a function $h:\mathbb{R}^{+}\to \mathbb{R}^{+}$
  is \emph{admissible} if it is the restriction of a smooth
  odd function with $h'(0)=1$ and in addition:
  \begin{enumerate}
  [noitemsep,topsep=0pt,parsep=0pt,partopsep=0pt,
  label=\textit{(\roman*)}]
    \item There exists $h_{\infty}\ge0$ such
    that
    $H(r):=h^{\frac{1-n}{2}}(h^{\frac{n-1}{2}})''=
      h_{\infty}+O(r^{-2})$ for $r\gg1$.
    \item $H^{(j)}(r)=O(r^{-1})$ and
    $(h^{-\frac12})^{(j)}=O(r^{-\frac12-j})$
    for $r\gg1$ and $1\le j\le[\frac{n-1}{2}]$.
    \item There exist $c,\delta_{0}>0$ such that
    for $r>0$ we have $h(r)\ge cr$ while the function
    $P(r)=rH(r)-rh_{\infty}+\frac{1-\delta_{0}}{4r}$
    satisfies the condition
    $P(r)\ge 0 \ge P'(r)$.
  \end{enumerate}
  We say that a manifold $M^{n}$ is \emph{admissible} if it
  has a global metric of the form
  $dr^{2}+h(r)^{2}d \omega^{2}_{\mathbb{S}^{n-1}}$
  with $h(r)$ admissible.
\end{definition}

The simplest admissible manifolds
are the flat space, with $h(r)=r$ and $h_{\infty}=0$,
and the real hyperbolic space $\mathbb{H}^{n}$, with
$h(r)=\sinh(r)$ and $h_{\infty}>0$. However the class
is substantially larger, and we shall exhibit
a few interesting examples below.

Consider now the equivariant wave map equation
\begin{equation}\label{eq:radialWMbis}
  \phi_{tt}-\phi_{rr}-(n-1)\frac{h'(r)}{h(r)}\phi_{r}+
  k(n-2+k)
  \frac{g(\phi)g'(\phi)}{h(r)^{2}}=0,\qquad
\end{equation}
with initial data
\begin{equation}\label{eq:radialWMdatabis}
  \phi(0,x)=\phi_{0}(x),\qquad
  \partial_{t}\phi(0,x)=\phi_{1}(x)
\end{equation}
from the admissible, $n$-dimensional base manifold $M^{n}$
with metric
$dr^{2}+h(r)^{2}d \omega^{2}_{\mathbb{S}^{n-1}}$
to a target, $\ell$-dimensional
manifold $N^{\ell}$, with metric
$d\rho^{2}+g(\rho)^{2}d \omega^{2}_{\mathbb{S}^{\ell-1}}$.
Note that the
base manifold is noncompact while the target can
arbitrary, thus
$g:[0,A)\to \mathbb{R}^{+}$ with $A$ finite or infinite.

In the following statement we use the
notation $|D_{M}|=(-\Delta_{M})^{\frac12}$.
The Sobolev space $H^{s}$ on $M^{n}$
is defined through the norm
\begin{equation*}
  \|\phi\|_{H^{s}}:=\|(1-\Delta_{M})^{\frac s2}v\|_{L^{2}(M^{n})}
\end{equation*}
while the weighted Sobolev spaces $H^{s}_{q}(w)$
on $M^{n}$ are defined through the norms
\begin{equation*}
  \|v\|_{H^{s}_{q}(w)}:=
  \|w^{-1}v\|_{H^{s}_{q}(\mathbb{R}^{m})},
  \qquad
  w(r):=\frac{r^{\frac{m-1}{2}}}{h(r)^{\frac{n-1}{2}}}.
\end{equation*}

Then we have:

\begin{corollary}[]\label{cor:hrfirst}
  Let $n\ge3$, $k\ge1$ and $0\le \delta<k$.
  Let $M^{n}$ and $N^{\ell}$ be two rotationally invariant
  manifolds of dimension $n$ and $\ell$ respectively,
  with $M^{n}$ admissible, and let $h_{\infty}$ be as
  in Definition \ref{def:admman}.

  If $h_{\infty}>0$ and
  $\| \phi_{0}\|_{H^{\frac{n}{2}+\delta}}+
    \|\phi_{1}\|_{H^{\frac{n}{2}-1+\delta}}$
  is sufficiently small, Problem \eqref{eq:radialWMbis},
  \eqref{eq:radialWMdatabis} has a unique global solution
  $\phi\in L^{\infty}H^{\frac{n}{2}+\delta}
    \cap CH^{\frac{n}{2}+\delta}
    \cap L^{p}H^{\frac{n-1}{2}+\delta}_{q}(w)$,
  with $p,q$ as in Theorem \ref{the:keyWE}.
  Moreover, if
  $\delta\ge \frac{1}{m+1}$, this is the unique solution
  in $CH^{\frac{n}{2}+\delta}$.

  If $h_{\infty}=0$ and
  $\||D_{M}|^{\frac12} \phi_{0}\|_{H^{\frac{n-1}{2}+\delta}(M)}+
    \||D_{M}|^{-\frac12}\phi_{1}\|_{H^{\frac{n-1}{2}+\delta}(M)}$
  is sufficiently small, Problem \eqref{eq:radialWMbis},
  \eqref{eq:radialWMdatabis} has a unique global solution
  $\phi$ with
  $|D_{M}|^{\frac12}\phi\in L^{\infty}H^{\frac{n-1}{2}+\delta}
    \cap CH^{\frac{n-1}{2}+\delta}$
  and
  $\phi\in L^{p}H^{\frac{n-1}{2}+\delta}_{q}(w)$.
  Moreover, if
  $\delta\ge \frac{1}{m+1}$, this is the unique solution
  with $|D_{M}|^{\frac12}\phi\in CH^{\frac{n-1}{2}+\delta}$.
\end{corollary}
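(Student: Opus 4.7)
The plan is to reduce the corollary to the Cauchy problem \eqref{eq:cauchykey}--\eqref{eq:cauchykeydata} already treated in Theorems \ref{the:keyWE} and \ref{the:regulUU}, by means of the change of variables $\phi = w \cdot \psi$ introduced in Section \ref{sec:chvar}. First I would observe that the computation carried out there shows that \eqref{eq:radialWMbis} is equivalent to \eqref{eq:perteq}, which fits the form \eqref{eq:cauchykey} with the explicit choice
\begin{equation*}
  \alpha(r)^{2}=\frac{r^{m-1}}{h(r)^{n+1}},
  \qquad
  \beta(r)=\frac{r^{\frac{m-1}{2}}}{h(r)^{\frac{n-1}{2}}}=w(r),
  \qquad
  Z(s)=\Gamma(s),
\end{equation*}
where $\Gamma$ is the smooth function obtained from the odd extension of $g(s)g'(s)$ by factoring out the cubic leading term. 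The potential is the $V(r)$ in \eqref{eq:defVc}, which coincides with \eqref{eq:defV}.

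Next I would verify that the structural assumptions of Theorem \ref{the:keyWE} are implied by admissibility of $M^{n}$. A direct computation gives the identity $H(r)=h^{\frac{1-n}{2}}(h^{\frac{n-1}{2}})''\equiv \widetilde{h}(r)$, so condition \textit{(i)} in Definition \ref{def:admman} yields $\widetilde{h}(r)=h_{\infty}+O(r^{-2})$ and hence the limit $h_{\infty}\ge0$ required in \eqref{eq:htilde2}; condition \textit{(iii)} coincides with \eqref{eq:condP2base} and with the lower bound $h(r)\ge cr$; condition \textit{(ii)}, together with the identity $V=\widetilde{h}-\frac{(n-1)(n-3)}{4r^{2}}+k(k+n-2)(h^{-2}-r^{-2})$ and $h\ge cr$, gives \eqref{eq:assderV}. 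For the coefficients $\alpha,\beta$ one writes $\beta(r)=r^{k}(r/h)^{\frac{n-1}{2}}$ and $\alpha(r)^{2}=r^{-1}\beta(r)^{2}(r/h)^{2}$, and verifies the pointwise bounds \eqref{eq:assalbe} by expanding derivatives and using the decay $(h^{-1/2})^{(j)}=O(r^{-1/2-j})$ in admissibility \textit{(ii)}, together with $h(r)\sim r$ near the origin (since $h$ is smooth odd with $h'(0)=1$, so $r/h$ is smooth with value $1$ at $0$).

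The second step is to translate norms, using Lemma \ref{lem:equivLBnorm2}. The potential $V_{0}=\widetilde{h}$ satisfies \eqref{eq:assc} (by condition \textit{(iii)} combined with Hardy) and has the regularity and decay required by the Lemma, thanks to admissibility \textit{(ii)}. For a radial $\phi$ on $M^{n}$ the Lemma then gives
\begin{equation*}
  \|\phi\|_{H^{s}(M^{n})}\simeq \|w^{-1}\phi\|_{H^{s}(\mathbb{R}^{m})},
  \qquad
  \||D_{M}|^{\pm\frac12}\bra{D_{M}}^{s}\phi\|_{L^{2}(M)}
  \simeq
  \||D|^{\pm\frac12}\bra{D}^{s}\psi\|_{L^{2}(\mathbb{R}^{m})},
\end{equation*}
for the admissible range of $s$, and by construction $\|\phi\|_{L^{p}H^{s}_{q}(w)}=\|\psi\|_{L^{p}H^{s}_{q}(\mathbb{R}^{m})}$. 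Hence the smallness hypothesis of the Corollary translates precisely into the smallness hypothesis of Theorems \ref{the:keyWE}--\ref{the:regulUU} for the data $(f,g)=(w^{-1}\phi_{0},w^{-1}\phi_{1})$ in the relevant Sobolev spaces on $\mathbb{R}^{m}$.

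Finally, applying Theorem \ref{the:keyWE} when $\delta=0$ and Theorem \ref{the:regulUU} when $0<\delta<k$ produces a unique global $\psi$ in the stated functional class on $\mathbb{R}^{m}$; setting $\phi=w\psi$ and invoking the norm equivalences in reverse yields the global solution $\phi$ of \eqref{eq:radialWMbis}--\eqref{eq:radialWMdatabis} with the claimed regularity and uniqueness. The unconditional uniqueness statement for $\delta\ge\frac{1}{m+1}$ follows from the corresponding statement of Theorem \ref{the:regulUU}, once one checks that any solution $\phi\in CH^{\frac n2+\delta}(M)$ (resp.~with $|D_{M}|^{\frac12}\phi\in CH^{\frac{n-1}{2}+\delta}$) transforms under $\phi=w\psi$ into a function $\psi\in CH^{\frac n2+\delta}(\mathbb{R}^{m})$ in the appropriate uniqueness class; this again is a direct consequence of Lemma \ref{lem:equivLBnorm2}. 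The only delicate point in the whole argument is the careful bookkeeping of the regularity and decay of $V$, $\alpha$, $\beta$ at both $r=0$ and $r=\infty$, which is the reason for requiring admissibility to the specific order $[\frac{n-1}{2}]+2$ of derivatives; everything else is a mechanical application of the already established machinery.
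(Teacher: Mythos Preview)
Your proposal is correct and follows exactly the paper's own approach: transport the problem to $\mathbb{R}^{m}$ via $\phi=w\psi$, invoke Theorems \ref{the:keyWE} and \ref{the:regulUU}, and translate the norms back with Lemma \ref{lem:equivLBnorm2}. One small slip: $V_{0}$ is not $\widetilde{h}$ but $\widetilde{h}-\frac{(n-1)(n-3)}{4r^{2}}$ (see \eqref{eq:potV0}); this does not affect the argument, since the required bounds on $V_{0}$ still follow from admissibility in the same way.
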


\begin{proof}
  The proof is just a transposition of Theorems \ref{the:keyWE}
  and \ref{the:regulUU} via the change of variables
  $\phi(t,r)=w(r)\psi(t,r)$,
  using the equivalence of norms given by
  Lemma \ref{lem:equivLBnorm2}.
\end{proof}

We conclude the paper with a discussion of the class of
admissible manifolds. It is not difficult to come up with
explicit examples, notably
the flat space $\mathbb{R}^{n}$,
the real hyperbolic spaces $\mathbb{H}^{n}$ for $n\ge3$,
and some spaces with polynomial growth of the metric;
this already shows that admissibility does not
impose a constraint on the growth rate of the metric
at infinity.
The examples are discussed in detail below.

However, we first give some stability criteria
which show that manifolds sufficiently
close to an admissible manifold are also admissible.
A simple criterion is the following:

\begin{proposition}[]\label{pro:crit1}
  Let $h,h_{\epsilon}$ be restrictions to $\mathbb{R}^{+}$
  of smooth odd functions, with
  $h'(0)=h_{\epsilon}'(0)=1$, and let
  $H(r):=h^{\frac{1-n}{2}}(h^{\frac{n-1}{2}})''$ and
  $H_{\epsilon}(r):=h_{\epsilon}^{\frac{1-n}{2}}
      (h_{\epsilon}^{\frac{n-1}{2}})''$.
  Assume the following conditions:
  \begin{enumerate}
  [noitemsep,topsep=0pt,parsep=0pt,partopsep=0pt]
    \item $h_{\epsilon}>cr$ for some $c>0$ and all $r>0$;
    \item $|H-H_{\epsilon}|\le \frac{\epsilon}{r^{2}}$
    for some $\epsilon>0$ and all $r>0$;
    \item $|H'-H_{\epsilon}'|\le \frac{\epsilon}{r^{3}}$
    for some $\epsilon>0$ and all $r>0$;
    \item $H^{(j)}-H_{\epsilon}^{(j)}=O(r^{-1})$
    for $r\gg1$, $j\le[\frac{n-1}{2}]$;
    \item $(h_{\epsilon}^{-\frac12})^{(j)}=O(r^{-j-\frac12})$
    for $r\gg1$, $j\le[\frac{n-1}{2}]$.
  \end{enumerate}
  If $h$ is admissible, then $h_{\epsilon}$ is also admissible,
  provided $\epsilon$ is sufficiently small.
\end{proposition}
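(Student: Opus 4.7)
The strategy is to verify each of the three admissibility conditions (i), (ii), (iii) for $h_\epsilon$ by combining the corresponding condition for $h$ with the closeness hypotheses (2)--(5), using condition (1) directly for the lower bound $h_\epsilon(r)\ge cr$. The main subtlety lies in (iii), where the sign conditions on $P_\epsilon$ and $P_\epsilon'$ need to be recovered despite a nonzero perturbation; this is achieved by slightly shrinking the parameter $\delta_0$.

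First, condition (i) for $h_\epsilon$ is immediate: by hypothesis (2), $|H_\epsilon(r)-H(r)|\le \epsilon/r^2$, and combined with $H(r)=h_\infty+O(r^{-2})$ this yields $H_\epsilon(r)=h_\infty+O(r^{-2})$ for $r\gg 1$; in particular $h_{\epsilon,\infty}=h_\infty$. Condition (ii) for $h_\epsilon$ reduces to hypotheses (4) and (5) of the proposition together with the same decay for $h$: from $H^{(j)}(r)=O(r^{-1})$ and the difference estimate (4), one gets $H_\epsilon^{(j)}(r)=O(r^{-1})$, while (5) gives the claim for $(h_\epsilon^{-1/2})^{(j)}$ directly.

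The heart of the proof is condition (iii). Since $h_{\epsilon,\infty}=h_\infty$, one may introduce a new constant $\delta_0'\in(0,\delta_0)$ (for instance $\delta_0'=\delta_0/2$) and write
\begin{equation*}
P_\epsilon(r)=rH_\epsilon(r)-rh_\infty+\frac{1-\delta_0'}{4r}
=P(r)+r(H_\epsilon-H)+\frac{\delta_0-\delta_0'}{4r}.
\end{equation*}
Applying hypothesis (2) one finds
\begin{equation*}
P_\epsilon(r)\ge P(r)+\frac{(\delta_0-\delta_0')-4\epsilon}{4r},
\end{equation*}
so $P_\epsilon\ge P\ge 0$ as soon as $\epsilon\le(\delta_0-\delta_0')/4$. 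Differentiating similarly one obtains
\begin{equation*}
P_\epsilon'(r)-P'(r)=(H_\epsilon-H)+r(H_\epsilon'-H')+\frac{\delta_0'-\delta_0}{4r^2},
\end{equation*}
which, by hypotheses (2) and (3), is bounded above by $[2\epsilon-(\delta_0-\delta_0')/4]/r^2$; hence $P_\epsilon'\le P'\le 0$ provided $\epsilon\le(\delta_0-\delta_0')/8$. Choosing $\delta_0'=\delta_0/2$ and $\epsilon\le\delta_0/16$ makes both inequalities hold simultaneously.

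I do not expect serious obstacles: the pointwise character of hypotheses (2) and (3) is tailored precisely to allow the absorption argument above, and no subtle uniform estimates at the origin are needed because $P(r)\to+\infty$ like $[(n-1)(n-3)+1-\delta_0]/(4r)$ as $r\to 0^+$, which dominates the $O(1/r)$ perturbation. The only item one must double-check is that $h_\epsilon$ is the restriction to $\mathbb{R}^+$ of a smooth odd function with $h_\epsilon'(0)=1$; this is part of the hypothesis, so one may conclude directly.
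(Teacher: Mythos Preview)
Your proposal is correct and follows essentially the same route as the paper's proof: verify (i) and (ii) directly from hypotheses (2), (4), (5), and for (iii) observe that $h_{\epsilon,\infty}=h_\infty$ by (2), then introduce a smaller parameter $\delta_0'<\delta_0$ so that the extra $\frac{\delta_0-\delta_0'}{4r}$ absorbs the $O(\epsilon/r)$ perturbation in $P_\epsilon$ and the $O(\epsilon/r^2)$ perturbation in $P_\epsilon'$. The paper states this more tersely (calling the new constant $\delta_1$ and leaving the explicit bounds on $\epsilon$ implicit), but the argument is the same.
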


\begin{proof}
  The criterion is a restatement of
  Definition \ref{def:admman}.
  Note in particular that, by (2), the limit $h_{\infty}$
  at infinity of $H$ and $H_{\epsilon}$
  is the same, and if we define
  $P_{\epsilon}=r(H_{\epsilon}-h_{\infty})+\frac{1-\delta_{1}}{4r}$
  with $0<\delta_{1}<\delta_{0}$, the property
  $P_{\epsilon}\ge0\ge P'_{\epsilon}$ follows immediately
  from the corresponding property for $P$
  and assumptions (2), (3), provided
  $\epsilon$ is small enough.
\end{proof}

We can make the criterion easier to apply by introducing the
functions
\begin{equation}\label{eq:psig}
  \sigma_{j}(r):=\frac{h_{\epsilon}^{(j)}}{h_{\epsilon}},
  \qquad
  p_{j}(r):=\frac{h_{\epsilon}^{(j)}-h^{(j)}}{h}.
\end{equation}
Then the difference $H_{\epsilon}-H$ can be expressed as
\begin{equation}\label{eq:diffHH}
  \textstyle
  H_{\epsilon}-H=
  \frac{n-1}{2}(p_{2}-\sigma_{2}p_{0})
  +
  \frac{(n-1)(n-3)}{4}
  (2 \sigma_{1}+\sigma_{1}p_{0}-p_{1})(p_{1}-\sigma_{1}p_{0})
\end{equation}
while by Faa' di Bruno's formula we have
\begin{equation}\label{eq:root}
  (h_{\epsilon}^{-\frac12})^{(j)}=
  \sum_{\nu=0}^{j}\sum_{j_{1}+\dots+j_{\nu}=j}
  C
  h_{\epsilon}^{-\frac12}
  \sigma_{j_{1}}\cdots \sigma_{j_{\nu}}
\end{equation}
where the constants $C=C(\nu,j_{1},\dots,j_{\nu})$
may be different for each term of the sum. Note also
the recursive relations
\begin{equation}\label{eq:recurs}
  \sigma_{j}'=
    \sigma_{j+1}-\sigma_{j}\sigma_{1}
  \qquad
  p_{j}'=
    p_{j+1}+
    p_{j}(p_{1}-\sigma_{1}p_{0}-\sigma_{1}).
\end{equation}
We can now give an effective criterion which is useful
in case the metric $h(r)$ grows exponentially, so that
we can not expect any decay at infinity for $\sigma_{j}$.
The following conditions are not sharp
but easy to check on concrete
examples:

\begin{proposition}[]\label{pro:crit2}
  Let $h,h_{\epsilon}:\mathbb{R}^{+}\to \mathbb{R}^{+}$
  be restrictions of smooth odd functions with
  $h'(0)=h_{\epsilon}'(0)=1$.
  Assume that
  \begin{enumerate}
  [noitemsep,topsep=0pt,parsep=0pt,partopsep=0pt,
  label=\textit{(\roman*)}]
    \item $h_{\epsilon}(r)\gtrsim r+r^{n}$ for $r>0$
    \item $|h_{\epsilon}^{(j)}|\lesssim h_{\epsilon}$
      for $j\le[\frac{n-1}{2}]+2$ and $r\gg1$
    \item $|h^{(j)}-h_{\epsilon}^{(j)}|/h\le
      \epsilon \bra{r}^{-3}$ for $j\le3$ and $r>0$
    \item $|h^{(j)}-h_{\epsilon}^{(j)}|/h\lesssim r^{-1}$
      for $j\le[\frac{n-1}{2}]+2$ and $r\gg1$.
  \end{enumerate}
  If $h$ is admissible, then
  $h_{\epsilon}$ is also admissible, provded $\epsilon$
  is small enough.
\end{proposition}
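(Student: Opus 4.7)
The plan is to verify, for $\epsilon$ small enough, each of the five hypotheses of Proposition \ref{pro:crit1} for the pair $(h, h_{\epsilon})$; admissibility of $h_{\epsilon}$ then follows. Hypothesis (1) is exactly (i). The remaining four will be checked using the algebraic identity \eqref{eq:diffHH} expressing $H_{\epsilon}-H$ as a polynomial in the auxiliary functions $\sigma_{j}, p_{j}$ defined in \eqref{eq:psig}, the Faà di Bruno expansion \eqref{eq:root} for $(h_{\epsilon}^{-1/2})^{(j)}$, and the recursions \eqref{eq:recurs} for differentiating those auxiliary functions.

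For hypotheses (2) and (3) I would split the analysis into the two regions $r\ge 1$ and $r\le 1$. On $r\ge 1$, assumption (iii) gives $|p_{j}|\lesssim \epsilon r^{-3}$ for $j\le 3$ and (ii) gives $|\sigma_{j}|\lesssim 1$, so substitution into \eqref{eq:diffHH} (and, for (3), into the derivative obtained via \eqref{eq:recurs}) yields bounds of order $\epsilon r^{-3}$ and $\epsilon r^{-4}$, stronger than the required $\epsilon r^{-2}$ and $\epsilon r^{-3}$. On $r\le 1$, (iii) still gives $|p_{j}|\lesssim \epsilon$ while the potentially singular factors satisfy $\sigma_{1}\lesssim r^{-1}$ and $\sigma_{2}\lesssim 1$ thanks to the smoothness of $h_{\epsilon}$ as an odd function with $h_{\epsilon}'(0)=1$. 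Tracking these powers, each monomial in \eqref{eq:diffHH} is $O(\epsilon r^{-1})$ near the origin, which fits into $\epsilon r^{-2}$ for $r\le 1$; a similar analysis of the derivative produces $O(\epsilon r^{-2})$, fitting into $\epsilon r^{-3}$ for $r\le 1$. Combining the two regions and taking $\epsilon$ small yields (2) and (3).

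Hypothesis (4) is obtained by iterating \eqref{eq:recurs} to write $H^{(j)}-H_{\epsilon}^{(j)}$ as a polynomial in $\sigma_{i}, p_{i}$ for $i\le j+2$; for $j\le [\tfrac{n-1}{2}]$ this stays inside the range $i\le [\tfrac{n-1}{2}]+2$, where (ii) bounds $|\sigma_{i}|$ by a constant and (iv) bounds $|p_{i}|$ by $Cr^{-1}$ for $r\gg 1$, giving the required $O(r^{-1})$ decay. Hypothesis (5) is handled by Faà di Bruno \eqref{eq:root}: each summand has the form $h_{\epsilon}^{-1/2}\sigma_{j_{1}}\cdots\sigma_{j_{\nu}}$ with $j_{1}+\cdots+j_{\nu}=j$, and (ii) bounds each $\sigma_{j_{i}}$ by a constant for $r\gg 1$, while (i) gives $h_{\epsilon}^{-1/2}\lesssim r^{-n/2}$. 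Since $j\le [\tfrac{n-1}{2}]$ forces $n/2\ge j+\tfrac12$, we conclude $|(h_{\epsilon}^{-1/2})^{(j)}|\lesssim r^{-j-1/2}$, as required.

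The main technical obstacle is the near-origin analysis in (2) and (3): the singular behaviour $\sigma_{1}\lesssim r^{-1}$ interacts with the merely $\epsilon$-small (not vanishing) bounds on $p_{j}$, producing contributions of size $\epsilon r^{-1}$ and, after differentiation, $\epsilon r^{-2}$. The resolution is the simple observation that such contributions are absorbed into the target bounds $\epsilon r^{-2}$ and $\epsilon r^{-3}$ precisely because $r\le 1$. Everything else reduces to bookkeeping combinations of (i)--(iv) with the algebraic identities \eqref{eq:diffHH}, \eqref{eq:recurs}, \eqref{eq:root}.
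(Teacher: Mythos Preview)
Your approach is the same as the paper's: both verify conditions (1)--(5) of Proposition~\ref{pro:crit1} via the identities \eqref{eq:diffHH}, \eqref{eq:root}, \eqref{eq:recurs}, using $|\sigma_j|\lesssim r^{-1}+1$ (from smoothness and oddness of $h_\epsilon$ near $0$ combined with (ii) for large $r$) and $|p_j|\le\epsilon\langle r\rangle^{-3}$ from (iii). One small miscount in your near-origin bookkeeping: the monomial $\sigma_1^2 p_0$ in \eqref{eq:diffHH} is only $O(\epsilon r^{-2})$ (not $O(\epsilon r^{-1})$) when you use just the $\epsilon$-bound on $p_0$, and after differentiation the worst contribution is $O(\epsilon r^{-3})$ (not $O(\epsilon r^{-2})$); since these are exactly the targets in (2) and (3), your conclusions still hold---you simply do not have the extra slack from $r\le1$ that you invoked.
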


\begin{proof}
  We use the notations \eqref{eq:psig}.
  To prove the claim it is sufficient to check conditions
  (1)--(5) in Proposition \ref{pro:crit1}.

  Applying (ii) for large $r$, and the definition
  and smoothness of $h_{\epsilon}$ near zero, we have
  \begin{equation*}
    |\sigma_{j}|\lesssim \frac1r+1
  \end{equation*}
  for all $r>0$ and $j\le[\frac{n-1}{2}]+2$.
  We have also by (iii)
  \begin{equation*}
    |p_{j}|\le \epsilon \bra{r}^{-3}
    \quad\text{for}\quad r>0,\ j\le3.
  \end{equation*}
  Then by \eqref{eq:diffHH} we obtain easily
  \begin{equation*}
    |H_{\epsilon}-H| \lesssim \epsilon r^{-2},
    \qquad
    |H_{\epsilon}'-H'|\lesssim \epsilon r^{-3}
  \end{equation*}
  which are conditions (2), (3), while (1) is implied by (i).
  Moreover, using the recursions \eqref{eq:recurs} and
  assumption (iv), we see
  that $\sigma_{j}$ is bounded for large $r$
  and $j\le[\frac{n-1}{2}]+2$; this implies that
  condition (4) is satisfied. Finally, recalling
  \eqref{eq:root} and using (i), we have for large $r$
  \begin{equation*}
    |(h_{\epsilon}^{-\frac12})^{(j)}|\lesssim
    h_{\epsilon}^{-\frac12}
    \lesssim
    r^{-\frac n2}
  \end{equation*}
  since the $\sigma_{j}$ are bounded, and this implies (5).

  Note that condition (i) can be relaxed to
  $h \gtrsim r+r^{n-1}$ when $n$ is even, and that
  in condition (iii) we could allow some singularity at
  0, for instance it is sufficient to assume that
  $|p_{3}|\lesssim \epsilon r^{-3}$.
\end{proof}

When the metric $h(r)$ has polynomial growth, it is more
convenient to use the following set of conditions to check
for admissibility:

\begin{proposition}[]\label{pro:crit3}
  Let $h,h_{\epsilon}:\mathbb{R}^{+}\to \mathbb{R}^{+}$
  be restrictions of smooth odd functions with
  $h'(0)=h_{\epsilon}'(0)=1$.
  Assume that
  \begin{enumerate}
  [noitemsep,topsep=0pt,parsep=0pt,partopsep=0pt,
  label=\textit{(\roman*)}]
    \item $h_{\epsilon}(r)\gtrsim cr$ for $r>0$ and some $c>0$
    \item $|h_{\epsilon}^{(j)}|\lesssim h_{\epsilon}r^{-j}$
      for $j\le[\frac{n-1}{2}]+2$ and $r\gg1$
    \item $|h^{(j)}-h_{\epsilon}^{(j)}|/h\le
      \epsilon r^{-j}$ for $j\le3$ and $r>0$
    \item $|h^{(j)}-h_{\epsilon}^{(j)}|/h\lesssim r^{-1}$
      for $1\le j\le[\frac{n-1}{2}]+2$ and $r\gg1$.
  \end{enumerate}
  If $h$ is admissible, then
  $h_{\epsilon}$ is also admissible, provded $\epsilon$
  is small enough.
\end{proposition}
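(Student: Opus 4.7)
The strategy mirrors the proof of Proposition \ref{pro:crit2}: reduce to verifying conditions $(1)$–$(5)$ of Proposition \ref{pro:crit1}, but with weights $r^{-j}$ (polynomial-type) replacing the uniform weights (exponential-type) used before. I would work with the auxiliary quantities $\sigma_j = h_\epsilon^{(j)}/h_\epsilon$ and $p_j = (h_\epsilon^{(j)} - h^{(j)})/h$ introduced in \eqref{eq:psig}, and exploit the algebraic identity \eqref{eq:diffHH}, the Faà di Bruno expansion \eqref{eq:root}, and the recursions \eqref{eq:recurs}.

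First I would derive the pointwise bounds
\[
|\sigma_j(r)| \lesssim r^{-j} \quad \text{for } r \gg 1, \qquad |\sigma_j(r)| \lesssim 1 \quad \text{for $r$ bounded,}
\]
for $j \le [\tfrac{n-1}{2}]+2$. The large-$r$ bound is immediate from $(ii)$, and the small-$r$ bound follows since $h_\epsilon$ is smooth with $h_\epsilon(r) \ge cr$ and $h_\epsilon'(0) = 1$ (so $\sigma_1 \sim 1/r$ near $0$, controlled by $r^{-1}$, while the required smallness for higher $\sigma_j$ near $0$ is a consequence of the Taylor expansion of $h_\epsilon$ as an odd smooth function). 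Analogously, assumption $(iii)$ gives $|p_j| \lesssim \epsilon r^{-j}$ for $j \le 3$ uniformly in $r > 0$, while $(iv)$ gives $|p_j| \lesssim r^{-1}$ for $1 \le j \le [\tfrac{n-1}{2}]+2$ and $r \gg 1$.

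Next I would plug these bounds into \eqref{eq:diffHH}. Each of the terms $p_2$, $\sigma_2 p_0$, $\sigma_1 p_1$, $\sigma_1^2 p_0$, $p_1^2$, $\sigma_1 p_0 p_1$ is controlled by $\epsilon r^{-2}$ (for instance, $|\sigma_1 p_1| \lesssim r^{-1} \cdot \epsilon r^{-1} = \epsilon r^{-2}$), yielding condition $(2)$ of Proposition \ref{pro:crit1}, namely $|H - H_\epsilon| \le \epsilon r^{-2}$. Differentiating \eqref{eq:diffHH} and using \eqref{eq:recurs}, which gives $|\sigma_j'| \lesssim r^{-j-1}$ and $|p_j'| \lesssim \epsilon r^{-j-1}$ for the relevant $j$, I obtain $|H' - H_\epsilon'| \lesssim \epsilon r^{-3}$, which is condition $(3)$. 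Condition $(1)$ is just assumption $(i)$.

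For condition $(4)$, further differentiations of \eqref{eq:diffHH} via \eqref{eq:recurs} produce polynomial expressions in $\sigma_j$ and $p_j$ with $j$ up to $[\tfrac{n-1}{2}]+2$; each factor is uniformly bounded for $r \gg 1$ thanks to $(ii)$ and $(iv)$, and at least one $p_j$ factor is present in every term, contributing the required $O(r^{-1})$ decay. Finally, for condition $(5)$, the Faà di Bruno formula \eqref{eq:root} combined with $h_\epsilon^{-1/2} \lesssim r^{-1/2}$ (from $(i)$) and $|\sigma_{j_1} \cdots \sigma_{j_\nu}| \lesssim r^{-(j_1 + \cdots + j_\nu)} = r^{-j}$ gives $(h_\epsilon^{-1/2})^{(j)} = O(r^{-1/2-j})$ for $r \gg 1$, as needed. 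The main bookkeeping obstacle is ensuring that the weights $r^{-j}$ line up correctly through the recursion \eqref{eq:recurs}, but once the inductive bounds $|\sigma_j^{(k)}| \lesssim r^{-j-k}$ and $|p_j^{(k)}| \lesssim \epsilon r^{-j-k}$ (in the appropriate ranges) are set up, the verification is routine and $\epsilon$ can be chosen small enough to preserve the sign conditions on $P_\epsilon$ inherited from $P$.
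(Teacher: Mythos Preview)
Your proposal is correct and follows essentially the same route as the paper's proof, which simply says ``the proof is very similar to the previous one'' and then records the key computation for condition (5) via \eqref{eq:root}: the polynomial decay $|\sigma_j|\lesssim r^{-j}$ from (ii) combined with $h_\epsilon^{-1/2}\lesssim r^{-1/2}$ from (i) gives $(h_\epsilon^{-1/2})^{(j)}=O(r^{-1/2-j})$, exactly as you argue. One small correction: your claim that $|\sigma_j|\lesssim 1$ for bounded $r$ is not quite right (as you yourself note, $\sigma_1\sim 1/r$ near $0$, and more generally $\sigma_j\lesssim r^{-1}+1$ as in the proof of Proposition~\ref{pro:crit2}), but this does not affect the estimates since the terms in \eqref{eq:diffHH} still combine to $O(\epsilon r^{-2})$ near the origin.
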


\begin{proof}
  The proof is very similar to the previous one and again
  based on Proposition \ref{pro:crit1}.
  Note in particular that we have now
  \begin{equation*}
    |(h_{\epsilon}^{-\frac12})^{(j)}|
    \lesssim
    \sum
    h_{\epsilon}^{-\frac12}
    r^{-j_{1}-\cdots-j_{\nu}}
    \lesssim
    r^{-\frac12-j}
  \end{equation*}
  by assumptions (i), (ii).
\end{proof}

\subsection{Asymptotically flat manifolds}\label{sub:almost_flat}

For the flat metric $h(r)=r$ we have
$H(r)=\frac{(n-1)(n-3)}{4r^{2}}$ and it is elementary to check
that all conditions of Definition \ref{def:admman} are
satisfied, so that flat $\mathbb{R}^{n}$ is an admissible
manifold. Consider now a rotationally symmetric manifold
$M^{n}$ whose metric is a perturbation of the flat space,
of the form
\begin{equation}\label{eq:flat}
  h_{\epsilon}(r)=r+ \mu(r)
\end{equation}
with $\mu(r)$ odd, smooth, with $\mu'(0)=0$,
satisfying the following assumptions:
\begin{equation}\label{eq:assmu1}
  |\mu(r)|+r|\mu'(r)|+r^{2}|\mu''(r)|+r^{3}|\mu'''(r)|\le \epsilon r
  \quad\text{for all}\quad
  r>0
\end{equation}
and
\begin{equation}\label{eq:assmu2}
  \textstyle
  |\mu^{(j)}(r)|\lesssim r^{1-j}
  \quad\text{for}\quad
  r\gg1,
  \quad
  j\le[\frac{n-1}{2}]+2.
\end{equation}
Then Proposition \ref{pro:crit3} implies immediately
that the metric $h_{\epsilon}$ is admissible if
$\epsilon$ is sufficiently small.

Note that in dimension $n=4$ this result is essentially a
corollary of Theorem 1.1 in \cite{Lawrie12-a}.
In that paper the global existence of small wave
maps is proved on four dimensional,
asymptotically flat manifolds
without symmetry assumptions.

\subsection{Perturbations of hyperbolic spaces}\label{sub:expgr}

For real hyperbolic spaces $M=\mathbb{H}^{n}$ we have
\begin{equation*}
  h(r)=\sinh(r),
  \qquad
  h_{\infty}=\frac{(n-1)^{2}}{4},
  \qquad
  H(r)=h_{\infty}+\frac{(n-1)(n-3)}{4\sinh^{2}r}
\end{equation*}
so that (i), (ii)
of Definition \ref{def:admman} are satisfied. Moreover
\begin{equation*}
  P(r)=\frac{(n-1)(n-3)}{4} \frac{r}{\sinh^{2} r}+
       \frac{1-\delta_{0}}{4r}
\end{equation*}
and it is easy to check that $P\ge0\ge P'$ if
$\delta_{0}<1$. Thus Corollary \ref{cor:hrfirst} implies
global existence of equivariant wave maps
$\phi:\mathbb{R}\times \mathbb{H}^{n}\to N^{\ell}$
for small data in the critical space $H^{\frac n2}$,
for $n\ge3$. Note that this result
could be also obtained by applying the sharp Strichartz
estimates in
\cite{AnkerPierfelice09-a},
\cite{AnkerPierfeliceVallarino12-a}
for linear flows on
real hyperbolic spaces.

However, we are able to treat the more general case of
a perturbation of the hyperbolic metric
\begin{equation}\label{eq:hyp}
  h_{\mu}(r)=\sinh r+\mu(r)
\end{equation}
under rather bland conditions on the perturbation $\mu(r)$.
For instance, we may assume that, for all $r>0$,
\begin{equation*}
  |\mu(r)|+|\mu'(r)|+|\mu''(r)|+|\mu'''(r)|\le
  \epsilon \bra{r}^{-3}\sinh r
\end{equation*}
and that, for sufficiently large $r\gg1$,
\begin{equation*}
  |\mu^{(j)}(r)|\le C r^{-1}e^{r}.
\end{equation*}
Note that the perturbation $\mu(r)$ can be unbounded
as $r\to+\infty$.
Then, by Proposition \ref{pro:crit2},
the function $h_{\mu}$ is also admissible
provided $\epsilon$ is small enough, and we obtain the
global existence of small equivariant wave maps of critical
regularity from a base manifold with
metric $dr^{2}+h_{\mu}(r)^{2}d \omega^{2}_{\mathbb{S}^{n-1}}$.

\subsection{Manifolds with prescribed growth}\label{sub:polynmanif}

It is not difficult to construct examples of
admissible manifolds if one allows a singularity at the south
pole $r=0$. The singularity can be smoothed out by
flattening the manifold near zero with a suitable cutoff.
We illustrate the procedure with two examples, one metric with
polynomial growth and one with exponential growth.

The choice
\begin{equation*}
  h(r)=r(1+\sqrt{r})^{M},
  \qquad
  M>0
\end{equation*}
gives rise to
\begin{equation*}
  H(r)=\frac{n-1}{16(1+\sqrt{r})^{2}r^{2}}H_{0}(r)
\end{equation*}
with
\begin{equation*}
  H_{0}(r)=
  4 (n-3) + (4n ( M+2)-6 ( M+4) ) \sqrt{r}
    + (M+2) (M (n-1)+2 (n-3)) r
\end{equation*}
and
\begin{equation*}
  P(r)=
  \frac{Q_{0}+Q_{1}\sqrt{r}+Q_{2}r}{16(1+\sqrt{r})^{2}r},
\end{equation*}
where
\begin{equation*}
  Q_{0}=4(n-2)^{2}-4\delta_{0},
  \qquad
  Q_{1}=2M(n-1)(2n-3)+8(n-2)^{2}-8\delta_{0},
\end{equation*}
\begin{equation*}
  Q_{2}=( M n+2n-M-4)^2-4\delta_{0}.
\end{equation*}
Since $0<\delta_{0}<1$,
we see that $P(r)$ is a sum of positive, decreasing
functions, and all properties of
Definition \ref{def:admman} are satisfied, with the
exception of $h(r)$ being the restriction of a smooth
odd function, due to the singularity at $r=0$.
Now we modify the definition of $h(r)$
as follows:
\begin{equation*}
  h_{\epsilon}(r)=r(1+\sqrt{r}\cdot e^{-\frac \epsilon r})^{M}
\end{equation*}
where $\epsilon>0$ is a small parameter.
Then $h_{\epsilon}$ is still admissible provided
$\epsilon$ is small enough, as it follows from
Proposition \ref{pro:crit3},
and is of course smooth at $r=0$
and can be extended to an odd function on $\mathbb{R}$.

In a similar way, the metric with exponential growth
\begin{equation*}
  h(r)=e^{r}-1
\end{equation*}
gives
\begin{equation*}
  H(r)=h_{\infty}+\frac{(n-1) (2 (n-2) e^{r}-n+1)}{4 (e^{r}-1)^2},
  \qquad
  h_{\infty}=\frac{(n-1)^{2}}{4}
\end{equation*}
so that
\begin{equation*}
  P(r)=
  \frac{(n-1)(n-2)}{2}\frac{r}{e^{r}-1}
  +\frac{(n-1)(n-3)}{4}\frac{r}{(e^{r}-1)^{2}}
  +\frac{1-\delta_{0}}{4r}
\end{equation*}
is a sum of positive, decreasing terms. Thus all conditions
in Definition \ref{def:admman} are trivially satisfied,
with the exception of $h(r)$ being the restriction of an
odd smooth function, but it is sufficient to modify it
near zero as above to obtain an admissible metric.


\end{document}